\documentclass{amsart}
\usepackage{amsmath,amsfonts,amssymb,amsthm,mathtools}
\usepackage{amscd}
\usepackage{bbm}
\usepackage{enumerate}
\usepackage{galois}
\usepackage{mathrsfs}
\usepackage{xypic}
\usepackage{geometry}
\usepackage{hyperref}

\usepackage{color}

\theoremstyle{plain}
\newtheorem{Th}{Theorem}[section]
\newtheorem{Cor}[Th]{Corollary}
\newtheorem{Lem}[Th]{Lemma}
\newtheorem{Prop}[Th]{Proposition}

\newtheorem{Def}{Definition}[section]

\newtheorem{Rem}{Remark}[section]

\numberwithin{equation}{section}

\DeclareMathOperator{\ima}{Im}
\DeclareMathOperator{\cok}{coker}

\newcommand{\der}{\mathrm{Der}(\hm A)}
\newcommand{\derx}{\mathrm{Der}^{\qp_x}(\hm A)}
\newcommand{\diff}[2]{\frac{\partial #1}{\partial #2}}

\newcommand{\qa}{\alpha}
\newcommand{\qb}{\beta}
\newcommand{\qd}{\delta}
\newcommand{\qg}{\gamma}
\newcommand{\qs}{\sigma}
\newcommand{\qt}{\tau}
\newcommand{\qth}{\theta}
\newcommand{\qe}{\varepsilon}

\newcommand{\qp}{\partial}
\newcommand{\qo}{\omega}

\newcommand{\Qg}{\Gamma}
\newcommand{\fm}{\mathfrak{m}}
\newcommand{\ql}{\lambda}

\newcommand{\Qd}{\Delta}

\newcommand{\res}{\mathrm{res}\,}
\newcommand{\vard}[2]{\frac{\delta #1}{\delta #2}}

\newcommand{\hm}[1]{\hat{\mathcal #1}}

\newcommand{\xym}[1]{\begin{center}\leavevmode \xymatrix{#1} \end{center}}

\newcommand{\kk}[1]{\left(#1\right)}

\newcommand{\fk}[2]{\left[#1, #2\right]}

\begin{document}

\title{Deformation of scalar generalized bi-Hamiltonian structure}

\author{Zhe Wang}
\address{Z.~Wang:\newline RIKEN Center for Interdisciplinary Theoretical and Mathematical Sciences (iTHEMS), RIKEN, Wako 351-0198, Japan}
\email{zhe.wang.aa@riken.jp}

\begin{abstract}
We study the classification of scalar generalized bi-Hamiltonian structures with semisimple leading terms. As an application, we prove the existence of integrable viscous conservation laws for any given viscous central invariant.  
\end{abstract}

\date{}
\maketitle
\tableofcontents


\section{Introduction}
The Witten-Kontsevich theorem which relates the intersection numbers over the moduli space of stable curves to the KdV hierarchy reveals the deep relationships between integrable hierarchies and 2D topological field theories (2dTFTs) \cite{kontsevich1992intersection,witten1990two}. Inspired by this, Dubrovin and Zhang developed a general theory toward the classification of integrable hierarchies associated with semisimple Frobenius manifolds \cite{dubrovin2001normal}. They showed that there exists a unique integrable hierarchy associated with a given semisimple Frobenius manifold, whose hydrodynamic leading term is the Principal Hierarchy of the Frobenius manifold, satisfying the four axioms: quasi-triviality, tau-symmetry, bihamiltonian structure and linearized Virasoro symmetries. In \cite{dubrovin2001normal}, they showed that the fourth axiom uniquely determines this integrable hierarchy, and the obtained hierarchy is called the Dubrovin-Zhang hierarchy associated with the Frobenius manifold. 

Initiated by the work \cite{lorenzoni2002deformations}, a series of works have been devoted to the study of Dubrovin-Zhang hierarchies in terms of the geometry of bi-Hamiltonian structures \cite{carlet2018deformations,DLZ-1,liu2013bihamiltonian,dubrovin2018bihamiltonian,getzler2002darboux,falqui2012exact, liu2023variational,liu2022variational, liu2025linearization}. In particular, the classification of bi-Hamiltonian structures with semisimple leading terms play a central role in the study of Dubrovin-Zhang hierarchies. It is shown that (the Miura equivalence class of) an $n$-comopnent bi-Hamiltonian structure with semisimple leading term is uniquely determined by $n$ functions of one variable, called the central invariants, and that the (Miura equivalence class of) Dubrovin-Zhang hierarchy associated with a semisimple Frobenius manifold is the unique integrable hierarchy obtained from the bi-Hamiltonian structure with the central invariants being $\frac{1}{24}$.

Parallel to the construction of Dubrovin-Zhang hierarchies from semisimple Frobenius manifolds, there are also different approaches to construct integrable hierarchies from 2dTFTs, namely the Givental-Teleman's reconstruction theory \cite{teleman2012structure,givental2001gromov,buryak2012polynomial} and the double ramification hierarchy \cite{buryak2015double}. These constructions are further generalized to the case for (bi-)flat F-manifolds \cite{arsie2021flat,arsie2023semisimple}. F-manifolds were originally introduced by Hertling and Manin \cite{hertling1999weak}, and (bi-)flat F-manifolds are F-manifolds with additional flat structures as introduced by Manin \cite{manin2005f}. These manifolds are closely related to the theory of integrable systems of hydrodynamic type \cite{lorenzoni2011f}, and both Givental-Teleman's reconstruction theory and the construction of double ramification hierarchies can be generalized to the case of flat F-manifolds \cite{arsie2021flat,arsie2023semisimple}.

However, the construction of Dubrovin-Zhang hierarchies cannot be generalized to the case of (bi-)flat F-manifolds straightforwardly, since the Principal Hierarchy of a (bi-)flat F-manifold does not possess a (bi)-Hamiltonian structure in general. In order to extend the construction of Dubrovin-Zhang hierarchies to the case of (bi-)flat F-manifolds and compare it with the other two constructions, in \cite{lorenzoni2026generalised}, Lorenzoni and the author introduced the notion of generalized (bi-)Hamiltonian structures of hydrodynamic type, which aims at extending the (bi-)Hamiltonian formalism to the Principal Hierarchies associated with (bi)-flat F-manifolds. The key point is to replace the usual notion of (bi-)Hamiltonian structures with mutually commuting odd flows on the infinite jet space of a super manifold. The generalized (bi-)Hamiltonian structures of hydrodynamic type are shown to be equivalent to (a compatible pair of) flat connections, and hence the Principal Hierarchy of a (bi-)flat F-manifold is naturally a generalized (bi-)Hamiltonian hierarchy of hydrodynamic type. 

In this paper, we study the classification of scalar generalized bi-Hamiltonian structures with semisimple leading terms, which may serve as the first step towards the construction of Dubrovin-Zhang hierarchies associated with semisimple bi-flat F-manifolds. Let us summarize the main results of this paper.

Denote by 
\[
\hm A = C^\infty(u)[[u^{(s+1)},\qth^s\colon s\geq 0]]
\] 
the ring of differential polynomials, where $\qth^s$ are odd variables satisfying
\[
\qth^s\qth^t+\qth^t\qth^s = 0,\quad s,t\geq 0.
\]
Define the derivation
\[
\qp_x = \sum_{s\geq 0}u^{(s+1)}\diff{}{u^{(s)}}+\qth^{s+1}\diff{}{\qth^s},\quad u^{(0)}:=u,
\]
then it follows that $\hm A$ can be viewed as the coordinate ring of the infinite jet bundle of a super manifold of dimension $(1|1)$ with local coordinates $(u,\qth)$, and hence sometimes we will use $u_{x}, u_{xx},\dots$ to denote $u^{(1)},u^{(2)},\dots$. The ring $\hm A$ admits two natural gradations, namely the differential degree given by  
\[
\deg_{\qp_x} u^{(s)} = s,\quad \deg_{\qp_x} \qth^s = s,\quad s\geq 0,
\]
and the super degree given by 
\[
\deg_\qth u^{(s)} = 0,\quad \deg_\qth \qth^s = 1,\quad s\geq 0.
\]
The subspace consisting of elements of differential degree $d$ is denoted by $\hm A_d$ and by $\hm A^p$ that for super degree $p$, we also introduce the notation $\hm A^p_d = \hm A^p\cap\hm A_d$. We consider the space
\[
\derx^p_d = \big\{X = \sum_{s\geq 0}\qp_x^s P\diff{}{u^{(s)}}+\qp_x^s Q\diff{}{\qth^s}\colon P\in\hm A^p_d,\ Q\in\hm A^{p+1}_d\big\},\quad p\geq -1,\quad d\geq 0
\]
whose elements are called evolutionary vector fields. The total space \[\derx = \bigoplus_{p,d}\derx^p_d\] is a graded Lie-algebra given by the usual graded commutators of vector fields. A scalar semisimple generalized bi-Hamiltonian structure is a pair of evolutionary vector fields $(P_0,P_1)$ in $\derx^1_1$ of the form  
\begin{align}
    \label{AB}
&P_0 = \sum_{s\geq 0}\qp_x^s\kk{\qth^1+ f(u)u_x\qth}\diff{}{u^{(s)}}+\qp_x^s\kk{f(u)\qth\qth^1}\diff{}{\qth^s},\\
\label{AC}
&P_1 = \sum_{s\geq 0}\qp_x^s\kk{u\qth^1+\kk{1+ug(u)}u_x\qth}\diff{}{u^{(s)}}+\qp_x^s\kk{ \kk{1+ug(u)}\qth\qth^1}\diff{}{\qth^s},
\end{align}
where $f(u)$ and $g(u)$ are arbitrary smooth functions. It is easy to check that $[P_i,P_j] = 0$ for $i,j = 0,1$. A deformation of $(P_0,P_1)$ is a pair $(\tilde P_0,\tilde P_1)$ given by 
\[
\tilde P_i = P_i+Q_i,\quad Q_i\in\derx^1_{\geq 2},\quad i=0,1,
\]
satisfying the condition 
\[
\fk{\tilde P_i}{\tilde P_j} = 0,\quad i,j = 0,1.
\]
Two deformations are said to be equivalent if there exists a change of variables of the form  
\[
u^{(s)}\mapsto \qp_x^s\kk{u+F},\quad \qth^s\mapsto \qp_x^s\kk{\qth+G},\quad F\in\hm A^0_{\geq 1},\quad G\in\hm A^1_{\geq 1}
\]
that transforms one deformation to the other. Such a change of variable is called a generalized Miura-type transformation.

Given the scalar semisimple generalized bi-Hamiltonian structure of the form \eqref{AB}, \eqref{AC}, let us define the function
\[
\bar K(u) = 1+u\left(g(u)-f(u)\right).
\]
We say \eqref{AB}, \eqref{AC} is of rigid type if $\bar K(u)$ is a non-constant function or if it is a constant irrational number. We say \eqref{AB}, \eqref{AC} is of $D$-type if $\bar K(u) = \frac12$, is of viscous type if $\bar K(u) = 1$, and all other cases are called of exceptional type.
\begin{Th}
    \label{AG}
    Fix a scalar semisimple generalized bi-Hamiltonian structure $(P_0,P_1)$ of the form \eqref{AB}, \eqref{AC}.
    \begin{enumerate}
        \item If $(P_0,P_1)$ is of rigid type, then all deformations are equivalent, hence are all equivalent to the trivial deformation $\tilde P_0 = P_0$ and $\tilde P_1 = P_1$.
        \item  If $(P_0,P_1)$ is of $D$-type, then equivalence classes of deformations are parametrized by a single function called the central invariant.
        \item If $(P_0,P_1)$ is of viscous type, then equivalence classes of deformations are parametrized by a single function called the viscous central invariant.
    \end{enumerate}
\end{Th}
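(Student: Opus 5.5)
The plan is to follow the Liu--Zhang/Dubrovin--Liu--Zhang strategy and reduce the classification to a bi-Hamiltonian cohomology computation for the graded Lie algebra $\derx$. Set $d_i=\mathrm{ad}_{P_i}$. Since $[P_i,P_j]=0$ and $P_i$ are odd, $d_0^2=d_1^2=d_0d_1+d_1d_0=0$, so we can form the bi-complex cohomology
\[
BH^p_d=\frac{\ker d_0\cap\ker d_1\cap\derx^p_d}{\ima(d_0d_1)\cap\derx^p_d}.
\]
First I will show, by the standard order-by-order argument in differential degree, that infinitesimal deformations of $(P_0,P_1)$ modulo infinitesimal Miura transformations are governed by $BH^1_{\ge 2}$, and obstructions lie in $BH^2_{\ge 3}$. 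Concretely, suppose $\tilde P_i$ agrees with $P_i$ up to degree $d-1$. The degree-$d$ part $Q_i^{(d)}$ satisfies $d_0Q_0^{(d)}=0$, $d_1Q_1^{(d)}=0$ and $d_0Q_1^{(d)}+d_1Q_0^{(d)}=0$, up to terms already fixed. Vanishing of $H(d_0)$ in positive degree then lets me write $Q_0^{(d)}=d_0X$. After a Miura transformation generated by $X$ we may take $Q_0^{(d)}=0$. Then $Q_1^{(d)}\in\ker d_0\cap\ker d_1$, and Miura transformations preserving $P_0$ modify it by $\ima(d_0d_1)$. So the deformation classes are read off from $BH^1_{\ge2}$, provided $BH^2_{\ge 3}=0$ or the relevant obstruction classes vanish.

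The first key step is to prove $H^p_{\ge1}(\derx,d_0)=0$ for $p\ge 0$, i.e.\ triviality of deformations of a single generalized Hamiltonian structure. I would first use a change of variables (the generalized Miura transformation $\qth\mapsto e^{\int f}\qth$ type rescaling) to bring $P_0$ to the form $\qth^1\partial_u+\dots$ with $f=0$; this is possible since $P_0$ corresponds to a flat connection. Then $P_0$ becomes essentially $\qp_x$ acting from $\qth$ to $u$. The cohomology of this Koszul-type differential can be computed by a homotopy formula, as in the classical case, and should vanish in positive differential degree. Simultaneously $P_1$ is put in a normal form depending only on $\bar K(u)$. This explains why only $\bar K$ enters the classification.

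The main step, and the main obstacle, is computing $BH^1_d$ and $BH^2_d$ in terms of $\bar K$. Following Liu--Zhang, I would use the vanishing of $d_0$-cohomology to identify $\ker d_0\cap\derx^p$ with $d_0$ of lower objects. This reduces $BH^p$ to the cohomology of $d_1$ acting on a quotient complex. I would then pass to a Fourier/symbol description in which elements are polynomials in $\qth^s$ with coefficients in $u^{(s)}$. The resulting complex can be filtered by the degree in $u_x,u_{xx},\dots$ (the standard $\deg_u$ filtration), so that the associated graded differential becomes a differential on polynomials in $\qth^s$ with coefficients depending only on $u$ and $\bar K(u)$.

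The spectral sequence argument should then give the following behaviour of the leading-symbol contribution of $P_1$, which scales $\qth^s$ by a factor like $(s\bar K-1)$ or similar.
\begin{itemize}
\item It is an isomorphism except at resonances $s\bar K=$ small integers.
\item Hence $BH^1_d=0$ unless $\bar K$ is rational of a special form, and it is nonzero only for constant $\bar K$.
\item The resonance at $\bar K=\tfrac12$ occurs exactly in differential degree $3$, giving a single function $c(u)$ via $\qth^3$-type terms. This is the analogue of the classical central invariant.
\item The resonance at $\bar K=1$ occurs in degree $2$ and gives the viscous central invariant.
\end{itemize}
Irrational constants and non-constant $\bar K$ give no resonance, whence rigidity. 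I expect getting the exact resonance condition right, including the "exceptional" rational values where $BH^1$ or $BH^2$ may be larger, to be the delicate part.

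Finally, for parts (2) and (3), I need that every cocycle in $BH^1$ extends to a genuine deformation, with the higher terms unique up to equivalence. For uniqueness I would show $BH^1_d=0$ in all other degrees $d$. For existence I would show $BH^2_{d}=0$ for $d\ge 4$ in the $D$ and viscous cases, again by the resonance computation. Alternatively, for existence it may be easier to exhibit explicit deformations, e.g.\ the generalized structures of KdV type for $D$-type and Burgers-type viscous conservation laws for the viscous type, realizing arbitrary $c(u)$. Combining existence with uniqueness gives the stated parametrizations.
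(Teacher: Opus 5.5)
There is a genuine gap. The substance of this theorem is the actual computation of $BH^1_{\ge2}$ and of the obstruction groups, and your proposal only states what that computation should produce. Moreover, the mechanism you sketch would not produce it as described.

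Your Darboux step is fine, and is a legitimate alternative to the paper's reduction to classical Hamiltonian cohomology: after normalizing $\diff{u}{\qt_0}=\qth^1$, $\diff{\qth}{\qt_0}=0$, the complex is the cone of $\qp_x$ on the de~Rham-type complex $(\hm A,\sum_s\qth^{s+1}\partial/\partial u^{(s)})$, which is acyclic in positive differential degree. The trouble starts after that.

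Under a filtration by the number of jet variables $u_x,u_{xx},\dots$, the leading part of the differential is independent of $f,g$. In the paper this is the differential $\diff{}{\qt_1}-\ql\diff{}{\qt_0}$ on $\derx[\ql]$, via Lemma~\ref{BZ}, which is what replaces your unspecified quotient complex. Its leading part is $(u-\ql)\hat d$ on the diagonal and $\pm(u-\ql)\bar\qp_x$ off it, so $\bar K$ cannot appear in the associated graded. The paper computes $E_1$ as a mapping cone over the $(u-\ql)\hat d$-cohomology, using Carlet--Posthuma--Shadrin. In differential degree $\ge2$ this leaves two copies of $\hat d(\hm A)$ with $\ql$ set equal to $u$, and only then does $\bar K(u)=K(u,u)$ enter. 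A second filtration by $\deg_{\qth^1}-\deg_\qth$ gives the operators $\qth^1\mathcal E-\qth^1$ and $\qth^1\mathcal E+\qth^1\bar K$. They act on $\hm C\,\hat d(\fm)$, with $\fm=u^{(s_1)}\cdots u^{(s_n)}\qth^{k_1+1}\cdots\qth^{k_m+1}$, by the weights $n-2+N\bar K$ and $n-1+(N+1)\bar K$, where $N=\sum s_i+\sum k_j$. Your factor $s\bar K-1$ is the first weight for $\fm=u^{(s)}$. But rigidity, and the list of possibly nonzero degrees $(1,2),(2,2),(2,3),(3,3)$ in the viscous case, need the full formula. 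Likewise, the leading pair depends on two functions and cannot be normalized to depend on $\bar K$ alone; the $\bar K$-only dependence only shows up on $E_1$, once $\ql$ is set equal to $u$.

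Moreover, a spectral sequence only bounds the cohomology from above, and your existence step does not supply the lower bound. In the viscous case you must show that every $a(u)$ really gives a class in $BH^1_2$. The paper does this by hand (Proposition~\ref{BP}). In degree $2$ there are no $\mathrm{ad}_{\diff{}{\qt_1}}\circ\mathrm{ad}_{\diff{}{\qt_0}}$-coboundaries. Solving $X=[\diff{}{\qt_0},Y_0]=[\diff{}{\qt_1},Y_1]$ with $Y_i(u)=0$, $Y_i(\qth)=p_i\qth^1+q_iu_x\qth$ leaves $p_0$ free. This gives the quasi-trivial representative $[\diff{}{\qt_1},[\diff{}{\qt_0},A]]$ with $A(\qth)=a(u)\log u_x$. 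Extension to all orders then needs only $BH^2_{\ge4}=0$, since quadratic terms first appear in degree $2d_0=4$. Your range $BH^2_{\ge3}$ is too crude, because $BH^2_3$ is not shown to vanish.

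Explicit examples cannot replace this. The KdV and Burgers structures have constant invariants. Viscous conservation laws with arbitrary $a(u)$ are themselves derived from this theorem (Theorem~\ref{AK}), so invoking them would be circular.

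For $D$-type the paper avoids the computation altogether. By Lemma~\ref{BH}, $\bar K\equiv\frac12$ means that in suitable coordinates the pair is $(D_{P_0},D_{P_1})$ for a genuine scalar bi-Hamiltonian structure. Hence $BH=VBH$, and the known central-invariant classification applies, including existence for every $c(u)$. If you run the resonance computation instead, the relevant obstructions lie in degrees $\ge 2d_0=6$. The weight analysis leaves a possible contribution $\varphi(u)\qth^1\qth^3$ in degree $(2,4)$, so your target ``$BH^2_d=0$ for $d\ge4$'' is neither needed nor supported.
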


We also have the uniqueness of symmetries for deformed generalized bi-Hamiltonian structure.
\begin{Th}
    \label{AH}
    Let $(\tilde P_0,\tilde P_1)$ be a deformation of the scalar semisimple generalized bi-Hamiltonian structure of the form \eqref{AB}, \eqref{AC}, and let $X\in\derx^0_{\geq 2}$ such that 
    \[
    \fk{\tilde P_0}{X} = \fk{\tilde P_1}{X} = 0.
    \]
    Then we have $X=0$. In particular, symmetries are uniquely determined by their hydrodynamic leading terms, and all symmetries mutually commute with each other.
\end{Th}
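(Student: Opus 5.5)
We prove Theorem~\ref{AH} by reducing to the cohomology of the bi-complex formed by $\mathrm{ad}_{P_0}$ and $\mathrm{ad}_{P_1}$, evaluated at super degree $p=0$.

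\emph{Reduction to the leading term.} Suppose $X\neq 0$ and expand $X=\sum_{d\geq d_0}X_d$ with $X_{d_0}\in\derx^0_{d_0}$ nonzero and $d_0\geq 2$. Since $Q_i\in\derx^1_{\geq 2}$ and $P_i\in\derx^1_1$, the bracket $[P_i+Q_i,X]$ splits by differential degree. Its component of lowest degree $d_0+1$ is $[P_i,X_{d_0}]$. Hence it suffices to prove the following.

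\emph{Claim.} If $Y\in\derx^0_d$ with $d\geq 2$ satisfies $[P_0,Y]=[P_1,Y]=0$, then $Y=0$.

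The claim gives $X_{d_0}=0$, a contradiction, so $X=0$. The remaining assertions then follow. Two symmetries with the same hydrodynamic part differ by an element of $\derx^0_{\geq 2}$ commuting with both $\tilde P_i$, so they coincide. For two symmetries $X,X'$, the super Jacobi identity shows that $[X,X']$ again commutes with both $\tilde P_i$. Its degree-one part is the bracket of two hydrodynamic symmetries of $(P_0,P_1)$. That part vanishes, because in scalar hydrodynamic type the commuting flows $a(u)u_x$ commute; I would check this directly or deduce it from the degree-one part of the computation below. Thus $[X,X']\in\derx^0_{\geq 2}$, and it vanishes by the first part.

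\emph{Proof of the claim.} Work in the bi-graded complex $(\derx,\mathrm{ad}_{P_0},\mathrm{ad}_{P_1})$.
\begin{enumerate}
\item First use $[P_0,Y]=0$. By a Miura-type change of coordinates (the generalized analogue of the flat-coordinate normal form), bring $P_0$ to the constant form $\sum_s \theta^{s+1}\partial_{u^{(s)}}$. This is possible since $P_0$ corresponds to a flat connection, and earlier results of the paper about $P_0$-cohomology provide it.
\item In these coordinates $\mathrm{ad}_{P_0}$ acts on the components $(P,Q)$ of $Y$ essentially as the variational differential $\theta^1\partial/\partial\ldots$. Its cohomology in positive differential degree should vanish, as in the analogue of $H^p_{d}(\hat{\mathcal F},d_1)=0$ for $d\geq 1$. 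This gives $Y=[P_0,Z]$ with $Z\in\derx^{-1}_{d-1}$.
\item Next impose $[P_1,[P_0,Z]]=0$, equivalently $[P_0,[P_1,Z]]=0$ after using $[P_0,P_1]=0$ and the sign rule. I then show this forces $[P_0,Z]=0$. The natural route is the standard $(\lambda)$-argument: compute $H(\derx,\mathrm{ad}_{P_1}-\lambda\,\mathrm{ad}_{P_0})$ in super degree $-1$ and $0$.
\item Concretely, in super degree $-1$, $Z$ has the form $\sum_s \partial_x^s R\,\partial_{\theta^s}$ with $R\in\hm A^0_{d-1}$. The equations reduce to an ODE-like system in the top jet variables of $R$.
\end{enumerate}

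The main obstacle is step~4: showing that the bi-Hamiltonian condition kills the super-degree-$0$ cohomology in degree $d\geq2$ \emph{for all types}, including the exceptional and viscous ones. Here the function $\bar K(u)$ could produce resonances. I would write $R$ by its leading coefficients in $u^{(d-1)}$ and derive a recursion of the form $(d\,\bar K(u)-c)\cdot(\text{coeff})=\cdots$. For super degree $0$, the relevant factor should be nonvanishing for every $d\geq 2$, unlike in super degree $1$, where resonances $\bar K=\tfrac12,1$ produce the central invariants of Theorem~\ref{AG}. If that fails, I would instead reduce to the scalar case by the change of variables $u\mapsto$ canonical coordinate, and compare with the known vanishing of $H^{0}_{\geq2}$ for scalar bi-Hamiltonian cohomology.
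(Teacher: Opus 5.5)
\textbf{There is a genuine gap: the Claim is never actually proved.} Your reduction to the lowest-degree component $X_{d_0}$ is fine, and so is the deduction of uniqueness and commutativity of symmetries. Steps 1--2 are also fine: Theorem~\ref{BM} does give $Y=[P_0,Z]$.

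The missing piece is the Claim itself, namely $BH^0_d(\derx,P_0,P_1)=0$ for $d\geq 2$ and \emph{every} $\bar K$. Step 3 only restates it (``this forces $[P_0,Z]=0$''). Step 4 only guesses that some $\bar K$-dependent factor ``should be nonvanishing''. That guess is unjustified, and along the $\lambda$-route you name it is false.

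Concretely, take the spectral sequence $E'$ behind Theorem~\ref{BO}. In super degree $0$ only the operator $\qth^1\mathcal E+\qth^1\bar K$ of Proposition~\ref{BJ} appears. Its weight on $\hat d(\varphi_1(u)u^{(s)})=\varphi_1\qth^{s+1}$ is $(s+1)\bar K$, which vanishes identically for the exceptional value $\bar K=0$. Likewise the weight $1+(s_1+s_2+1)\bar K$ on $u^{(s_1)}u^{(s_2)}$ vanishes for suitable negative rational $\bar K$. So $E'_1$ is nonzero in super degree $0$ in exactly the cases you were worried about, and the weight argument gives nothing there.

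Your fallback does not work either. You are already in scalar canonical coordinates. Outside the $D$-type, $(P_0,P_1)$ is not the image of a genuine bi-Hamiltonian structure under \eqref{AQ}, so results on scalar variational bi-Hamiltonian cohomology say nothing about $BH^0(\derx,P_0,P_1)$.

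\textbf{The missing idea is to apply Theorem~\ref{BM} to both structures.} Instead of passing to $[P_0,[P_1,Z]]=0$, use $[P_1,Y]=0$ to get a second potential. Then $Y=[P_0,P]=[P_1,Q]$ with $P,Q\in\derx^{-1}_{d-1}$, determined by $P(\qth)=p$ and $Q(\qth)=q$ with $p,q\in\hm A^0_{d-1}$.

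Let $N\geq 2$ be the top jet order of $p,q$. Differentiate the $u$-component of $[P_0,P]-[P_1,Q]=0$ with respect to $u^{(N+1)}$ and $u^{(N)}$, and the $\qth$-component with respect to $\qth^N$. This gives
\[
\frac{\partial p}{\partial u^{(N)}}=u\frac{\partial q}{\partial u^{(N)}},\qquad
\frac{\partial p}{\partial u^{(N-1)}}-u\frac{\partial q}{\partial u^{(N-1)}}=(\bar K-1)\,u_x\frac{\partial q}{\partial u^{(N)}}=(N+\bar K)\,u_x\frac{\partial q}{\partial u^{(N)}}.
\]
Hence $(N+1)u_x\,\partial q/\partial u^{(N)}=0$. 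The $\bar K$-dependence cancels, so there is no resonance for any type.

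Induction then leaves $p=ur(u)u_x^{d-1}$ and $q=r(u)u_x^{d-1}$. A direct check gives $r(1-\bar K)=0$ from the $u$-component and $r(2-d-d\bar K)=0$ from the $\qth^1$-coefficient of the $\qth$-component. These two factors cannot vanish simultaneously for $d\geq 2$, so $r=0$.

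A minor point: $[X,X']$ needs no separate degree-one check. The bracket of two flows in $\derx^0_1$ already lies in $\derx^0_2$.
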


Let us present here some examples. A genuine scalar semisimple bi-Hamiltonian structure gives an example of generalized bi-Hamiltonian structures of $D$-type, and actually all generalized bi-Hamiltonian structures of $D$-type arise in this way. For example, we take $(\tilde P_0,\tilde P_1)$ whose actions are given by
\[
\tilde P_0(u) = \qth^1,\quad \tilde P_0(\qth) = 0,\quad \tilde P_0(u) = u\qth^1+\frac{1}{2}u_x\qth+c\qth^3,\quad \tilde P_1(\qth) = \frac12\qth\qth^1, 
\]
where $c$ is a constant, and different values of $c$ present non-equivalent deformations. Taking $c=\frac{1}{8}$ we recover the bi-Hamiltonian structure of the KdV hierarchy as in the Witten-Kontsevich theorem. An example of deformation of generalized bi-Hamiltonian structure of viscous type is 
\[
\tilde P_0(u) = \qth^1,\quad \tilde P_0(\qth) = 0,\quad \tilde P_0(u) = u\qth^1+u_x\qth+c\qth^2,\quad \tilde P_1(\qth) = \qth\qth^1, 
\] 
where $c$ is a constant, and different values of $c$ present non-equivalent deformations. Note that a major difference from the above D-type example is that the non-trivial deformations start from differential degree 2, which is a phenomenon that cannot be studied within the framework of bi-Hamiltonian structures. We will give a detailed study of this example in Sect.\,\ref{AD}. For generalized bi-Hamiltonian structure of exceptional type, the deformations can be very complicated, and we give an example in Sect.\,\ref{AE}. An important application of the above results is the proof of the existence of integrable viscous conservation laws. Recall that an integrable conservation law is an evolutionary PDE of the form 
\[
\diff{u}{t} = \qp_x\big(u^2+ \qe 2a(u)u_x+\sum_{n\geq 2}\qe^n B_n\big),\quad B_n\in\hm A^0_n, 
\]
which is formally integrable, meaning that for any smooth function $f(u)$ there exists a symmetry of the above PDE of the form 
\[
\diff{u}{t_f} = \qp_x\big(f(u)+ \sum_{n\geq 1}\qe^n C_n[f]\big),\quad C_n[f]\in\hm A^0_n. 
\]
An integrable conservation law is called viscous if $a(u)\neq 0$, which is studied in detail by Arsie, Lorenzoni and Moro \cite{arsie2015integrable}. In particular, they conjectured that Miura equivalence classes of integrable viscous conservation laws are parametrized precisely by the non-zero function $a(u)$. Using the generalized bi-Hamiltonian structure, we partially verify this conjecture by proving the following existence result.
\begin{Th}
    \label{AK}
    There exists an integrable conservation law for any non-zero function $a(u)$.
\end{Th}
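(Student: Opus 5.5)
The plan is to realize the integrable viscous conservation law as a generalized bi-Hamiltonian flow of a viscous-type deformation, and to use Theorem \ref{AG}(3) to produce a deformation with a prescribed viscous central invariant, and Theorem \ref{AH} to obtain the symmetries.

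\textbf{Step 1: the leading term.} Take $f=g=0$, so $\bar K=1$ and $(P_0,P_1)$ is of viscous type. The odd flows $P_0(u)=\qth^1$ and $P_1(u)=u\qth^1+u_x\qth$ are, at the hydrodynamic level, the pair of flat connections attached to the flat F-manifold structure underlying the Riemann invariant $u$. The Principal Hierarchy of this structure consists of the flows $u_{t_f}=\qp_x f(u)$, and each such flow is a symmetry of both $P_0$ and $P_1$ in $\derx^0_1$. In particular $u_t=\qp_x(u^2)$ is among them.

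\textbf{Step 2: the deformation.} By Theorem \ref{AG}(3) there is a deformation $(\tilde P_0,\tilde P_1)$ whose viscous central invariant is any prescribed function $c(u)$. I would first compute, as in the example with $\tilde P_1(u)=u\qth^1+u_x\qth+c\qth^2$, how the degree-$2$ part of the deformation determines the degree-$2$ part of the commuting flows. For $u_t=\qp_x(u^2)$ I expect the coefficient $a(u)$ to be proportional to $c(u)$, perhaps up to an explicit nonvanishing factor. This fixes the choice of central invariant that yields a given nonzero $a(u)$; rescaling $\qe$ can absorb constants.

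\textbf{Step 3: extending the flows.} For each $f$, I would construct $X_f=X_f^{[1]}+\sum_{n\ge 2}X_f^{[n]}\in\derx^0$ with hydrodynamic part $\qp_x f(u)\,\qp/\qp u+\cdots$ and satisfying $[\tilde P_0,X_f]=[\tilde P_1,X_f]=0$. This is done degree by degree. The obstruction at each degree lies in a bi-Hamiltonian cohomology group of $(P_0,P_1)$ in super degree $1$. The vanishing results that underlie Theorem \ref{AG} (the cohomology computations on $\derx$), combined with the graded Jacobi identity $[\tilde P_i,[\tilde P_j,\cdot]]$-closedness of the obstruction, should show the obstruction is exact. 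Hence $X_f$ exists.

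\textbf{Step 4: symmetries.} Theorem \ref{AH} implies that all the $X_f$ mutually commute: $[X_f,X_h]$ commutes with both $\tilde P_i$ and has vanishing hydrodynamic part, since the leading flows commute. Thus every $X_f$ is a symmetry of $X_{u^2}$, which is the desired integrable conservation law. It remains to check the conservation form: the $u$-component of $X_f$ should be a total $x$-derivative. I would argue this from $\tilde P_0(u)=\qth^1+\cdots$. Up to a Miura transformation, $\tilde P_0$ can be normalized to $P_0$, and commuting with $P_0(u)=\qth^1=\qp_x\qth$ should force the flow to be $\qp_x$ of something.

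\textbf{Main obstacle.} The main obstacle is the existence of the extension in Step 3 together with the explicit identification of $a(u)$ with the central invariant in Step 2. The latter requires ensuring that the degree-$2$ term is not removable by Miura transformation, which is exactly why it is nontrivial. For Step 3 I would first try to reduce to the vanishing of $H^1$ of the relevant bicomplex in differential degree $\ge 3$, computed with the same spectral-sequence argument used for Theorem \ref{AG}.
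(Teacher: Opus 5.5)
Your proposal is correct and is essentially the paper's own argument: take the viscous leading term with $f=g=0$, use Theorem~\ref{AG} and Proposition~\ref{BP} (with $\tilde P_0=P_0$ by Corollary~\ref{BX}) to get a deformation with prescribed viscous central invariant, extend each hydrodynamic flow $\partial_x f(u)$ degree by degree using $BH^1_d=0$ for $d\geq 3$ (Theorem~\ref{BO}), get commutativity from Theorem~\ref{AH}, and identify the viscous term by an explicit degree-$2$ computation. That computation does give $u_{t_{u^2}}=\partial_x\bigl(u^2+2a(u)u_x+\cdots\bigr)$ when the degree-$2$ part of $\tilde P_1(u)$ is $\partial_x\bigl(2a(u)\theta^1\bigr)$, which confirms your expected proportionality; conservation form also follows directly, since $[P_0,X]=0$ forces $X=[P_0,Z]$ with $Z\in\derx^{-1}$, so $X(u)=\partial_x Z(\theta)$.
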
  

The paper is organized as follows. In Sect.\,\ref{AF}, we present necessary preliminaries regarding the formulation and the geometry of generalized (bi)-Hamiltonian structure of hydrodynamic type. Sect.\,\ref{AI} is devoted the proof of Theorem\,\ref{AG} and Theorem\,\ref{AH}, where we classify the deformations by computing  cohomology groups of a certain complex. We present some examples and applications in Sect.\,\ref{AJ} and prove Theorem\,\ref{AK}. Finally, we give some concluding remarks in Sect.\,\ref{AL}.

 \vspace{2em}
 \noindent\textbf{Acknowledgements} The author is supported by JSPS Grants-in-Aid for Scientific Research JP26K16995.

\section{Preliminaries}
\label{AF}
In this section, we introduce basic notions of infinite jet manifolds, and realize various constructions in integrable hierarchies as certain geometric structures on them. One may refer to \cite{DLZ-1,liu2018lecture} for details.
\subsection{Infinite jet manifolds and (bi-)Hamiltonian structures}
Let us fix a $\mathbb Z$-graded manifold $\hat M$ of dimension $(n|n)$.  Locally $\hat M$ is isomorphic, as a $\mathbb Z$-graded space, to $\mathbb R^n\oplus \mathbb R^n$, where the first component has degree 0 and the second degree 1. Hence, a local chart is given by an open subset $U\subseteq \hat M$ together with coordinates
\begin{equation}
    \label{AN}
(u^1,\dots, u^n;\qth_1,\dots,\qth_n),
\end{equation}
where $u^1,\dots, u^n$ are the usual even coordinates, and $\qth_1,\dots,\qth_n$ are odd coordinates satisfying 
\[
\qth_\qa\qth_\qb+\qth_\qb\qth_\qa = 0,\quad \qa,\qb = 1,\dots,n.
\]
Moreover, we require that all transition functions preserve the $\mathbb Z$-gradation, hence for another local chart on $\bar U\subseteq \hat M$ with coordinates 
\[
(\bar u^1,\dots, \bar u^n;\bar \qth_1,\dots,\bar \qth_n),
\]
we must have 
\begin{equation}
    \label{AM}
\bar u^\qa = \bar u^\qa(u^1,\dots, u^n),\quad \bar\qth_\qa = f_\qa^\qb(u^1,\dots,u^n)\qth_\qb,
\end{equation}
here and henceforth we assume the summation over repeated upper and lower Greek indices. Note that the even coordinates $u^\qa$ together with their transition functions give the atlas of an ordinary manifold of dimension $n$ which we denote by $\hat M_0$.

Denote by $J^\infty(\hat M)$ the infinite jet bundle of $\hat M$, which is a $\mathbb Z$-graded fiber bundle of dimension $(\infty|\infty)$ trivialized over local charts of $\hat M$. Take local coordinates of the form \eqref{AN} then we denote by $u^{\qa,s}$ for $s\geq 1$ the even fiber coordinates of $J^\infty(M)$ and by $\qth_\qa^s$ the odd fiber coordinates. The transition functions of fiber coordinates with respect to the coordinate changes of the form \eqref{AM} are recursively defined by 
\begin{equation}
    \label{AO}
\bar u^{\qa,s} = \sum_{i=0}^{s-1}\diff{\bar u^{\qa,s-1}}{u^{\qb,i}}u^{\qb,i+1},\quad \bar\qth_\qa^s = \sum_{i=0}^s\binom{s}{i}\diff{u^{\qb,i}}{\bar u^\qa}\qth_\qb^{s-i},\quad s\geq 1,
\end{equation} 
where we use the convention $u^{\qa,0} = u^\qa$ and $\qth_\qa^0 = \qth_\qa$. The transitions above imply that there exists a vector field with the local expression 
\[
\qp_x = \sum_{s\geq 0} u^{\qa,s+1}\diff{}{u^{\qa,s}}+\qth_\qa^{s+1}\diff{}{\qth_\qa^s},
\]
that is globally well-defined on the total space of $J^\infty(\hat M)$. It is easy to see that 
$u^{\qa,s} = \qp_x^s u^\qa$, and therefore sometimes we use $u^\qa_x,u^{\qa}_{xx},\dots$ to denote $u^{\qa,1},u^{\qa,2},\dots$.

Let us denote by $\hm A$ the ring of differential polynomials on $J^\infty(\hat M)$, and locally it is given by 
\[
\hm A = C^\infty(u^\qa)[[u^{\qa,s+1},\qth_\qa^s\colon s\geq 0]],
\]
furthermore, the action of $\qp_x$ endows $\hm A$ with the structure of a differential algebra.
The ring $\hm A$ comes with two natural $\mathbb Z$-gradations, one is called the differential degree given by assigning 
\[
\deg_{\qp_x} u^{\qa,s} = s,\quad \deg_{\qp_x}\qth_\qa^s = s,\quad s\geq 0,
\]
and the other is called the super gradation given by 
\[
\deg_{\qth} u^{\qa,s} = 0,\quad \deg_{\qth}\qth_\qa^s = 1,\quad s\geq 0.
\]
Due to the homogeneity of transitions \eqref{AM} and \eqref{AO}, it follows that these gradations are globally well-defined. The subspace consisting of homogeneous elements of differential degree $d$ is denoted by $\hm A_d$ and by $\hm A^p$ that for super degree $p$, and we also denote $\hm A^p_d = \hm A^p\cap\hm A_d$. 

We continue to define super derivations of $\hm A$. For a fixed $p\in\mathbb Z$, the space $\der^p$ consists of linear maps $X\colon \hm A\to \hm A$ satisfying 
\[
X(\hm A^q)\subseteq \hm A^{p+q},\quad X(fg) = X(f)g+(-1)^{pq}fX(g),\quad f\in \hm A^q,\quad g\in\hm A.
\]
Note that $\der^p = 0$ for $p\leq -2$, and we denote by $\der = \bigoplus_{p\geq -1}\der^p$.
This is a graded Lie algebra with respect to usual graded commutators. We also use the notations $\der_d$ to denote the subspace consisting derivations $X$ with $X(\hm A_n)\subseteq \hm A_{n+d}$ and denote $\der^p_d = \der^p\cap\der_d$.
We have $\qp_x\in\der^0_1$ and define the subspaces
\[
\derx^p = \big\{X\in\der^p,\quad [X,\qp_x] = 0\big\},\quad \derx = \bigoplus_{p\geq -1}\derx^p,
\]
then $\derx$ is a Lie subalgebra whose elements are called flows or evolutionary vector fields. 

For a smooth manifold $M$ of dimension $n$, we can construct canonically an infinite jet manifold $J^\infty(\hat M)$ by taking $\hat M = T^*M[1]$ with $\hat M_0 = M$. More explicitly, the transitions of $\hat M$ takes the form 
\begin{equation}
    \label{AP}
\bar u^\qa = \bar u^\qa(u^1,\dots, u^n),\quad \bar\qth_\qa =\diff{u^\qb}{\bar u^\qa}\qth_\qb,
\end{equation}
where $(u^\qa)$ and $(\bar u^\qa)$ are local coordinates of $M$. Define the quotient space  $\hm F = \hm A/\qp_x\hm A$ whose elements are called local functionals, and for an element $f\in \hm A$, we denote by $\int f$ its class in $\hm F$. The space $\hm F$ admits two gradations induced from those of $\hm A$, and we use notations $\hm F^p, \hm F_d$ and $\hm F^p_d$ for subspaces of homogeneous elements. $\hm F$ admits a graded Lie algebra structure induced from the natural symplectic structure on $T^*M$. Locally the Lie bracket takes the form 
\begin{equation}
    \label{AU}
[F,G]=\int \vard{F}{\qth_\qa}\vard{G}{u^\qa}+(-1)^p \vard{F}{u^
\qa}\vard{G}{\qth_\qa}, \quad F\in\hm F^p,\quad G\in\hm F, 
\end{equation}
where the variational derivatives are defined by 
\[
\vard{F}{u^\qa} = \sum_{s\geq 0}(-\qp_x)^s\diff{f}{u^{\qa,s}},\quad \vard{F}{\qth_\qa}= \sum_{s\geq 0}(-\qp_x)^s\diff{f}{\qth_\qa^s},\quad F = \int f.
\]
It can be checked that variational derivatives as well as the above Lie bracket are globally well-defined with respect to the transitions \eqref{AP}. An important construction is that we can embed $\hm F$ into $\derx$ as a Lie subalgebra by 
\begin{equation}
    \label{AQ}
\hm F^p\to \derx^{p-1}\colon P\mapsto D_P = \sum_{s\geq 0}\qp_x^s\kk{\vard{P}{\qth^\qa}}\diff{}{u^{\qa,s}}+(-1)^p \kk\qp_x^s{\vard{P}{u^\qa}}\diff{}{\qth_\qa^s}.
\end{equation}
Moreover, we have the identity 
\begin{equation}
    \label{AW}
    \int D_P(Q) = [P,Q],\quad \forall\ P,Q\in\hm F.
\end{equation}
Now we define a Hamiltonian structure to be a local functional $P\in\hm F^2$ such that $[P,P] = 0$ and a bi-Hamiltonian structure to be a pair $(P_0,P_1)$ of Hamiltonian structures satisfying $[P_0,P_1] = 0$. A Hamiltonian structure $P$ hence defines an odd flow $D_P\in\derx^1$ commuting with itself, and similarly a bi-Hamiltonian structure $(P_0,P_1)$ determines a pair of flows $(D_{P_0},D_{P_1})$ satisfying $[D_{P_i},D_{p_j}] = 0$.

\subsection{Generalized (bi-)Hamiltonian structures of hydrodynamic type}
Let $\hat M$ be a $\mathbb Z$-graded manifold of dimension $(n|n)$. We recall in this section the basic theory of generalized (bi)-Hamiltonian structures of hydrodynamic type as introduced and studied in \cite{lorenzoni2026generalised}.
\begin{Def}
    A generalized Hamiltonian structure on $\hat M$ is an odd flow $X\in\derx^1$ such that $[X,X] = 0$. A generalized bi-Hamiltonian structure is a pair $(X_0,X_1)$ of generalized Hamiltonian structures satisfying $[X_0,X_1] = 0$.
\end{Def}

It follows that these notations generalize the usual notion of (bi-)Hamiltonian structures, and any (bi-)Hamiltonian structure can be viewed as a generalized one via the map \eqref{AQ}. Typically, we will use the notation $\diff{}{\qt}$ to denote generalized Hamiltonian structures to emphasize that they are vector fields on $J^\infty(\hat M)$. 

We take local coordinates $(u^1,\dots, u^n;\qth_1,\dots,\qth_n)$ of $\hat M$, then any generalized Hamiltonian structure of differential degree 1 takes the form 
\begin{equation}
    \label{AR}
  \diff{u^\qa}{\qt} = g^{\qa\qb}(u)\qth_\qb^1+\Qg^{\qa\qb}_\qg(u) u^\qg_x\qth_\qb,\quad \diff{\qth_\qa}{\qt} = V_\qa^{\mu\ql}(u)\qth_\ql\qth_\mu^1+Q^{\mu\ql}_{\qa\qg}(u)u^\qg_x\qth_\ql\qth_\mu,
\end{equation}
where we assume that 
\[
Q^{\ql\mu}_{\qa\qb}(u)+Q^{\mu\ql}_{\qa\qb}(u) = 0.
\]
We define a generalized Hamiltonian structure of hydrodynamic type to be that of the form \eqref{AR} with $\det(g^{\qa\qb}(u))\neq 0$. It turns out that the notion of generalized Hamiltonian structures of hydrodynamic type is characterized by the differential geometry of the manifold $\hat M_0$.
\begin{Th}[\cite{lorenzoni2026generalised}]
Let $\diff{}{\qt}$ be a generalized Hamiltonian structure of the form \eqref{AR}. Then the data 
\begin{equation}
    \label{AT}
\nabla_{\qp_\qa}\qp_\qb = A^\ql_{\qa\qb}\qp_\ql,\quad A^\ql_{\qa\qb} = -g_{\qb\mu}\qp_\qa g^{\ql\mu}+g_{\qb\qg}\Qg^{\ql\qg}_\qa,
\end{equation}
where $g_{\qa\qb}$ are elements of the inverse of $(g^{\qa\qb})$ given by $g^{\mu\qa}g_{\mu\qb} = g^{\qa\mu}g_{\qb\mu}= \qd^\qa_\qb$, defines a torsionless flat affine connection on $\hat M_0$. Conversely, any torsionless flat affine connection on $\hat M_0$ together with a choice of non-degenerate matrix $g^{\qa\qb}(u)$ gives rise to a generalized Hamiltonian structure of hydrodynamic type. 
\end{Th}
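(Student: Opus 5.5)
The plan is to reduce everything to the identity $X^2=0$ on generators and to exploit the freedom in the frame of the odd coordinates. Since $X=\diff{}{\qt}$ is odd, $[X,X]=2X^2$. Moreover $X^2$ is again evolutionary, because it commutes with $\qp_x$. Hence $[X,X]=0$ holds if and only if $X^2(u^\qa)=0$ and $X^2(\qth_\qa)=0$.

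\textbf{Step 1: normalize $g$.} The transformations \eqref{AM} allow a $u$-dependent linear change of the odd coordinates. Because $\det(g^{\qa\qb})\neq0$, I will introduce $\eta^\qa=g^{\qa\qb}(u)\qth_\qb$. Computing $\eta^{\qa,1}=\qp_x\eta^\qa$, one finds that in the new frame
\[
X(u^\qa)=\eta^{\qa,1}+A^\qa_{\qg\mu}\,u^\qg_x\,\eta^\mu ,
\]
where $A$ is exactly the expression in \eqref{AT}, up to the index conventions used there. The first check is therefore a bookkeeping one: the coefficient $-g_{\qb\mu}\qp_\qa g^{\ql\mu}+g_{\qb\qg}\Qg^{\ql\qg}_\qa$ is what appears once $\qp_x g^{\qa\qb}$ is absorbed. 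Alongside this I will verify that, under a change of even coordinates \eqref{AM} together with the induced jet transformation \eqref{AO}, the coefficients $A$ transform with the inhomogeneous second-derivative term of a connection. This comes from differentiating the $\qth^1$ term, and it shows that $\nabla$ is globally well defined.

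\textbf{Step 2: expand $X^2(u^\qa)=0$ in the normalized frame.} Write $X(\eta^\qa)=B^\qa_{\mu\ql}\eta^\ql\eta^{\mu,1}+C^\qa_{\qg\mu\ql}u^\qg_x\eta^\ql\eta^\mu$. Applying $X$ to $\eta^{\qa,1}+A^\qa_{\qg\mu}u^\qg_x\eta^\mu$ produces $\qp_x X(\eta^\qa)$ plus terms coming from $X(u)$ and $X(\eta)$, and I will sort the result by the monomials $\eta\eta^{2}$, $\eta^1\eta^1$, $u_x\eta\eta^1$, $u_{xx}\eta\eta$ and $u_xu_x\eta\eta$.
\begin{enumerate}[(i)]
\item The $\eta\eta^2$ and $\eta^1\eta^1$ coefficients force the odd flow to be $X(\eta^\qa)=-A^\qa_{\ql\mu}\eta^\ql\eta^{\mu,1}+\dots$, with $A^\qa_{\ql\mu}$ symmetric in its lower indices. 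This symmetry is the torsion-freeness. In particular $V$ is determined by $g$ and $\Qg$.
\item The $u_x\eta\eta^1$ and $u_{xx}\eta\eta$ coefficients then fix $C$ in terms of $\qp A$ and $AA$.
\item The $u_xu_x\eta\eta$ coefficient, once (i) and (ii) are inserted, is the antisymmetrization $\qp_\qg A^\qa_{\qd\mu}-\qp_\qd A^\qa_{\qg\mu}+A^\qa_{\qg\nu}A^\nu_{\qd\mu}-A^\qa_{\qd\nu}A^\nu_{\qg\mu}$, which is the curvature. Hence flatness.
\end{enumerate}
The equation $X^2(\eta)=0$ should give nothing new beyond consequences of these relations. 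I will check this, but I expect it to follow from the Bianchi identities and flatness.

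\textbf{Step 3: the converse.} Given a torsionless flat $\nabla$, I will choose flat coordinates, so that $A=0$ locally, and define $X(u^\qa)=\eta^{\qa,1}$ and $X(\eta^\qa)=0$. Then $X^2=0$ is trivial. Substituting $\eta^\qa=g^{\qa\qb}\qth_\qb$ for the prescribed nondegenerate $g$ recovers a flow of the form \eqref{AR} whose connection is the given one, by the transformation law of Step 1. Gluing over charts is consistent because flat coordinates are unique up to affine maps, which preserve the defining formulas.

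\textbf{Main obstacle.} I expect the main difficulty to be the coefficient bookkeeping in Step 2, especially extracting the curvature cleanly from the $u_xu_x\eta\eta$ terms and confirming that $X^2(\eta)=0$ adds no further constraint. Working in the normalized frame $g=\qd$, with $A=\Qg$, is what I would rely on to keep this manageable.
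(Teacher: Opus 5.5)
The paper gives no proof of this statement. It is imported from \cite{lorenzoni2026generalised}, and the only trace here is Lemma~\ref{AS}. That lemma uses exactly your normalization (flat coordinates, then an odd frame change making the leading coefficient constant) and quotes that $\diff{\bar\qth_\qa}{\qt}=0$ is then forced. Your plan therefore has to stand on its own.

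Steps~1 and~3 hold up. With $\eta^\qa=g^{\qa\qb}\qth_\qb$ you do get $X(u^\qa)=\eta^{\qa,1}+A^\qa_{\qg\mu}u^\qg_x\eta^\mu$ with $A$ precisely as in \eqref{AT}. Keeping this normalization under $u\mapsto\bar u(u)$ forces $\bar\eta^\qa=\diff{\bar u^\qa}{u^\qb}\eta^\qb$, so $A$ transforms as Christoffel symbols. The converse via flat coordinates with affine gluing is also fine.

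The gap is in Step~2, the step meant to produce flatness. Your items (ii)--(iii) put the curvature in the wrong place, and (iii) as stated is false.

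Write $X(\eta^\qa)=B^\qa_{\mu\ql}\eta^\ql\eta^{\mu,1}+C^\qa_{\qg\mu\ql}u^\qg_x\eta^\ql\eta^\mu$ with $C$ antisymmetric in $\mu,\ql$. The $\eta\eta^2$ terms give $B^\qa_{\mu\ql}=A^\qa_{\mu\ql}$ with a plus sign, not the minus in your (i); for $n=1$ one checks directly that $X(\eta)=A\eta\eta^1$. The $\eta^1\eta^1$ terms give symmetry of $A$, as you say.

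The $u_{xx}\eta\eta$ terms, however, involve only $C$ and $A\cdot A$, and they determine $C$ completely and algebraically, with no $\qp A$:
\[
2C^\qa_{\qg\mu\ql}=A^\qa_{\rho\ql}A^\rho_{\qg\mu}-A^\qa_{\rho\mu}A^\rho_{\qg\ql}.
\]
The coefficient of $u^\qg_x\eta^\ql\eta^{\mu,1}$ is
\[
\qp_\qg A^\qa_{\mu\ql}-\qp_\mu A^\qa_{\qg\ql}+2C^\qa_{\qg\mu\ql}-A^\qa_{\rho\ql}A^\rho_{\qg\mu}+A^\qa_{\qg\rho}A^\rho_{\mu\ql}.
\]
After inserting $C$ and using torsion-freeness, it becomes exactly the curvature:
\[
\qp_\qg A^\qa_{\mu\ql}-\qp_\mu A^\qa_{\qg\ql}+A^\qa_{\qg\rho}A^\rho_{\mu\ql}-A^\qa_{\mu\rho}A^\rho_{\qg\ql}.
\]
So flatness is the compatibility condition between the $u_{xx}\eta\eta$ and $u_x\eta\eta^1$ equations.

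The $u_xu_x\eta\eta$ coefficient cannot play that role. It is built from $\qp C$, $A\,\qp A$ and $AC$, and once $C$ is expressed through $A$ every term carries an undifferentiated factor of $A$. Hence it vanishes at any point where $A=0$ (for instance in normal coordinates), whatever the curvature is there, so it cannot yield $R=0$.

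The forward implication only needs necessary conditions, so neither the $u_xu_x\eta\eta$ terms nor $X^2(\eta)=0$ have to be examined. In the converse, $X^2=0$ is immediate in flat coordinates. The Bianchi-identity check you single out as the main obstacle is therefore not needed at all.
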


It is worth nothing that the affine connection $\nabla$ given by \eqref{AT} is globally well-defined on $\hat M_0$, meaning that it is independent of choice of local coordinates of $\hat M$. In contrast, the matrix $(g^{\qa\qb})$ does not give a well-defined geometric quantity on $\hat M_0$. Furthermore, the connection $\nabla$ is the only invariant defined by a generalized Hamiltonian structure of hydrodynamic type, meaning that for a fixed torsionless flat affine connection $\nabla$, different choices of the matrices $(g^{\qa\qb}(u))$ give rise to equivalent generalized Hamiltonian structures.

For a generalized bi-Hamiltonian structure $(\diff{}{\qt_0},\diff{}{\qt_1})$ taking the form 
\begin{align*}
      \diff{u^\qa}{\qt_0} &= g^{\qa\qb}\qth_\qb^1+\Qg^{\qa\qb}_\qg u^\qg_x\qth_\qb,\quad \diff{\qth_\qa}{\qt_0} = V_\qa^{\mu\ql}\qth_\ql\qth_\mu^1+Q^{\mu\ql}_{\qa\qg}u^\qg_x\qth_\ql\qth_\mu,\\
        \diff{u^\qa}{\qt_1} &= \tilde g^{\qa\qb}\qth_\qb^1+\tilde \Qg^{\qa\qb}_\qg u^\qg_x\qth_\qb,\quad \diff{\qth_\qa}{\qt_1} = \tilde V_\qa^{\mu\ql}\qth_\ql\qth_\mu^1+\tilde Q^{\mu\ql}_{\qa\qg}u^\qg_x\qth_\ql\qth_\mu,
\end{align*}
under local coordinates  $(u^1,\dots, u^n;\qth_1,\dots,\qth_n)$, the quantities
\[
L^\qb_\qa = \tilde g^{\qb\mu}g_{\qa\mu}
\]
define a global section $L$ of the vector bundle $\mathrm{End}(T\hat M_0)$ with vanishing Nijenhuis torsion. Together with the connections $\nabla$ and $\nabla^*$ corresponding to $\diff{}{\qt_0}$ and $\diff{}{\qt_1}$, the data $(L,\nabla,\nabla^*)$ completely determines $(\diff{}{\qt_0},\diff{}{\qt_1})$. We call the generalized bi-Hamiltonian structure $(\diff{}{\qt_0},\diff{}{\qt_1})$ to be semisimple if all the eigenvalues of $L$ are non-constant and pairwise distinct. Then it follows that there exist local coordinates $(u^1,\dots, u^n;\qth_1,\dots,\qth_n)$ of $\hat M$ such that 
\[
\diff{u^i}{\qt_0} = \qth_i^1+\sum_{j,k}A^i_{jk}u^k_x\qth_j,\quad \diff{u^i}{\qt_1} = u^i\qth_i^1+\sum_{j,k}B^i_{jk}u^k_x\qth_j,
\]
and such a coordinate system is called the canonical coordinates of $(\diff{}{\qt_0},\diff{}{\qt_1})$. 

\subsection{Formulation of the deformation problem}
In this section, we formulate the classification problem of the deformations of generalized (bi)-Hamiltonian structures of hydrodynamic types. We start by introducing natural coordinate transformations on $\hm A$. We fix local coordinates $(u^1,\dots, u^n;\qth_1,\dots,\qth_n)$ of $\hat M$.
\begin{Def}
    For any $f^1,\dots,f^n\in\hm A^0_{\geq 1}$ and $g_1,\dots,g_n\in\hm A^1_{\geq 1}$, the isomorphism of $\hm A$ given by  
    \begin{equation}
        \label{AV}
    u^{\qa,s}\mapsto u^{\qa,s}+\qp_x^s f^\qa,\quad \qth_\qa^s\mapsto\qth_\qa^s+\qp_x^s g_\qa,\quad s\geq 0,
    \end{equation}
    is called a generalized Miura-type trasnformation.
\end{Def}

Let us compare this definition to that of an ordinary Miura-type transformation. Recall that for a smooth manifold $M$ of dimension $n$, we can canonically construct the infinite jet bundle $J^\infty(\hat M)$. Then a Miura-type transformation (of the second kind) is an isomorphism on $\hm A$ of the form 
\[
 u^{\qa,s}\mapsto \qp_x^s \bar u^\qa,\quad \qth_\qa^s\mapsto \qp_x^s\sum_{t\geq 0}(-\qp_x)^t\diff{u^\qb}{\bar u^{\qa,t}}\qth_\qb
\]
where we have 
\[
\bar u^\qa = u^\qa+f^\qa,\quad f^\qa\in\hm A^0_{\geq 1}.
\]
Therefore, a Miura-type transformation is completely determined by the changes of even variables $u^\qa$, while a generalized Miura-type transformation allows additional freedom of changes of odd variables $\qth_\qa$. It is proved \cite{liu2011jacobi} that a Miura-type transformation preserves the Lie algebra structure \eqref{AU} defined on $\hm F$, hence a Miura-type transformation respects the natural symplectic structure on $\hat M$. On the other hand, a generalized Miura-type transformation is simply a general isomorphism of $\hm A$ as a differential graded algebra.

\begin{Rem}
    The notion of a generalized Miura-type transformation has been used in an essential way in \cite{liu2023variationalreci} to study the bi-Hamiltonian properties of linear reciprocal transformations.
\end{Rem}

\begin{Lem}[\cite{liu2023variationalreci}, Lemma 9]
For any generalized Miura-type transformation of the form \eqref{AV}, there exists a unique flow $T\in\derx^0_{\geq 1}$ such that the transformation reads 
\[
u^{\qa,s}\mapsto T(u^{\qa,s}),\quad \qth_\qa^s\mapsto T(\qth_\qa^s),\quad s\geq 0.
\]
Moreover, any flow $X\in\derx$ is transformed to $\exp(-\mathrm{ad}_T)X$ after performing the transformation \eqref{AV}.
\end{Lem}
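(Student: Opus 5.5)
We first construct $T$ from the given data, then show it is unique, and finally derive the transformation rule for flows.

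\emph{Construction of $T$.} Write $\phi$ for the automorphism \eqref{AV}. It commutes with $\qp_x$, preserves super degree, and equals the identity plus terms of positive differential degree, because $f^\qa\in\hm A^0_{\geq 1}$ and $g_\qa\in\hm A^1_{\geq 1}$. Set
\[
D=\log\phi=\sum_{k\geq 1}\frac{(-1)^{k+1}}{k}(\phi-\mathrm{id})^k .
\]
This series converges in the topology given by differential degree, since $(\phi-\mathrm{id})^k$ raises differential degree by at least $k$. Because $\phi$ is an even algebra automorphism that is unipotent with respect to the differential-degree filtration, the standard formal argument shows $D$ is an even derivation:
\begin{itemize}
\item The multiplicativity $\phi\circ m=m\circ(\phi\otimes\phi)$ holds on $\hm A\hat\otimes\hm A$.
\item Taking logarithms of commuting unipotent operators, $\log(\phi\otimes\phi)=D\otimes 1+1\otimes D$.
\item Hence $D\circ m=m\circ(D\otimes 1+1\otimes D)$, which is the Leibniz rule.
\end{itemize}
The sign conventions are trivial because $\phi$ is even. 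Since $\phi$ commutes with $\qp_x$, so does every polynomial in $\phi$, and therefore $[D,\qp_x]=0$. Thus $D\in\derx^0_{\geq 1}$. We now identify $T$ with $D$ and get $\phi=\exp(T)$. The statement "$u^{\qa,s}\mapsto T(u^{\qa,s})$" is to be read as $u^{\qa,s}\mapsto \exp(T)u^{\qa,s}$, the time-one flow of $T$.

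\emph{Uniqueness.} Suppose $\exp(T)=\exp(T')$ with $T,T'\in\derx^0_{\geq 1}$. Both exponentials are unipotent, and $\log$ inverts $\exp$ on such operators, so $T=\log\exp(T)=\log\exp(T')=T'$. Equivalently, one can compare homogeneous components degree by degree: the lowest-degree component where $T$ and $T'$ differ would already differ in $\exp(T)-\exp(T')$.

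\emph{Transformation of flows.} A flow $X$ is transported by the change of variables to the conjugate $\phi^{-1}X\phi$, or to $\phi X\phi^{-1}$, depending on whether the new variables are substituted into $X$ or pulled back. With the convention fixed by the statement, this conjugate is $\exp(-T)X\exp(T)$. The identity $e^{-T}Xe^{T}=\exp(-\mathrm{ad}_T)X$ is the usual formal identity. It holds here because every series involved converges by differential degree, as $\mathrm{ad}_T$ raises that degree by at least one. The conjugate again commutes with $\qp_x$, so it is still a flow.

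\emph{Main obstacle.} The main difficulty is making the formal manipulations rigorous in the completed ring $\hm A$. This needs three checks:
\begin{itemize}
\item The series for $\log$ and $\exp$ converge on each element, since they are finite in each differential degree.
\item A derivation is determined by its values on the generators $u^{\qa,s}$ and $\qth^s_\qa$, so the comparisons above can be made on generators.
\item The sign of the conjugation matches the paper's convention for how flows transform.
\end{itemize}
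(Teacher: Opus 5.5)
Your proof is correct. This includes your reading of the displayed formula as $u^{\qa,s}\mapsto \exp(T)(u^{\qa,s})$, $\qth^s_\qa\mapsto\exp(T)(\qth^s_\qa)$. Taken literally the formula cannot hold, since $T\in\derx^0_{\geq 1}$ raises differential degree.

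The paper gives no argument for this lemma; it only refers to Lemma 9 of Liu--Wang--Zhang. So the useful comparison is with the more common degree-by-degree construction (I do not claim this is exactly the cited argument). That construction goes as follows:
\begin{itemize}
\item Write $T=\sum_{k\geq 1}T_k$ with $T_k\in\derx^0_k$, each determined by $T_k(u^\qa)\in\hm A^0_k$ and $T_k(\qth_\qa)\in\hm A^1_k$.
\item Solve $\exp(T)(u^\qa)=u^\qa+f^\qa$ and $\exp(T)(\qth_\qa)=\qth_\qa+g_\qa$ recursively. The degree-$k$ part fixes $T_k$ in terms of $T_1,\dots,T_{k-1}$.
\item Conclude that $\exp(T)$ and the transformation are both $\qp_x$-commuting algebra automorphisms that agree on $u^\qa,\qth_\qa$, hence are equal. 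Uniqueness is read off from the recursion.
\end{itemize}

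Your logarithm construction avoids the recursion. Commutation with $\qp_x$ comes automatically from $[\phi,\qp_x]=0$, and uniqueness is just $\log\circ\exp=\mathrm{id}$ on degree-raising operators. The recursive route is more hands-on and computes $T$ explicitly to any order.

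The price of your route is proving the Leibniz rule for $\log\phi$. Your completed-tensor-product argument is fine. A lighter alternative:
\begin{itemize}
\item For fixed $a,b$, each differential-degree component of $\phi^n(ab)-\phi^n(a)\phi^n(b)$ is a polynomial in $n$, because $\phi^n=\sum_k\binom{n}{k}(\phi-1)^k$ and only finitely many $k$ contribute in a given degree.
\item This polynomial vanishes for all $n\geq 0$, so it vanishes identically.
\item Hence $\exp(tD)=\sum_k\binom{t}{k}(\phi-1)^k$ is multiplicative for every $t$, and differentiating at $t=0$ gives $D(ab)=D(a)b+aD(b)$.
\end{itemize}

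One step should be tightened. You choose the direction of conjugation ``with the convention fixed by the statement'', which is circular. Instead, define the transformed flow $\tilde X$ by $X\circ\phi=\phi\circ\tilde X$. In words, $X$ applied to the new variables $\phi(u^\qa),\phi(\qth_\qa)$ is given by the same expressions as $\tilde X$ applied to the old ones. Then $\tilde X=\phi^{-1}X\phi=e^{-T}Xe^{T}=\exp(-\mathrm{ad}_T)X$, so the sign is forced rather than chosen.
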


Let us fix a generalized Hamiltonian structure $P\in\derx^1_1$ of hydrodynamic type. Then it follows that the classification of deformations of the generalized Hamiltonian structure $P$ is described by the cohomology groups
\[
H^p_d(\derx,P) = \frac{\derx^p_d\cap\ker \mathrm{ad}_P}{\derx^p_d\cap\ima \mathrm{ad}_P}.
\]
It is easy to see that the groups $H^1_{\geq 2}(\derx, P)$ classify the equivalence classes under generalized Miura-type transformations of infinitesimal deformation of $P$, and the groups  $H^2_{\geq 2d_0}(\derx, P)$ describe the obstructions for extending an infinitesimal deformation, where $d_0$ is the lowest differential degree of non-zero classes in $H^1_{\geq 2}(\derx, P)$. Similarly, for a generalized bi-Hamiltonian structure $(P_0,P_1)$ of hydrodynamic type, we define the groups 
\begin{equation}\label{BB}
BH^p_d(\derx,P_0,P_1) = \frac{\derx^p_d\cap\ker\mathrm{ad}_{P_0}\cap \ker\mathrm{ad}_{P_1}}{\derx^p_d\cap\ima \mathrm{ad}_{P_1}\comp \mathrm{ad}_{P_0}},
\end{equation}
then the groups $BH^1_{\geq 2}(\derx,P_0,P_1)$  classify the equivalence classes under generalized Miura-type transformations of infinitesimal deformation of $(P_0,P_1)$ and the groups  $BH^2_{\geq 2d_0}(\derx,P_0,P_1)$ describe the obstructions, where  $d_0$ is the lowest differential degree of non-zero classes in $BH^1_{\geq 2}(\derx,P_0,P_1)$.

\begin{Rem}
    When the generalized (bi)-Hamiltonian structure comes from a genuine (bi)-Hamiltonian structure via the map \eqref{AQ}, the corresponding cohomology groups coincide with the variational (bi-)Hamiltonian cohomology groups as introduced in \cite{liu2022variational,liu2023variational}.
\end{Rem}

\section{Classification of scalar semisimple generalized bi-Hamiltonian structure}
\label{AI}
\subsection{A Darboux theorem for generalized Hamiltonian structures}
Let us first prove that any deformation of a generalized Hamiltonian structure of hydrodynamic type is trivial. This generalizes the usual Darboux theorem for Hamiltonian structures of hydrodynamic type \cite{getzler2002darboux}. In what follows, we fix a generalized Hamiltonian structure $\diff{}{\qt}$ of hydrodynamic type.

\begin{Lem}
    \label{AS}
    For any constant non-degenerate matrix $\eta^{\qa\qb}$, there exist suitable  coordinates $(u^1,\dots,u^n,\qth_1,\dots,\qth_n)$ on $\hat M$ such that  
    \[
    \diff{u^\qa}{\qt} = \eta^{\qa\qb}\qth_\qb^1,\quad \diff{\qth_\qa}{\qt} = 0.
    \]
\end{Lem}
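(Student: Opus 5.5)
Lemma \ref{AS} concerns a structure $\diff{}{\qt}$ of the form \eqref{AR}; I will normalise it by choosing coordinates on $\hat M$, that is, a chart change of the form \eqref{AM} together with its jet prolongation \eqref{AO}. The proof has two steps.

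\emph{Step 1: flatten the connection.} By the theorem from \cite{lorenzoni2026generalised} recalled above, $\diff{}{\qt}$ determines a torsionless flat affine connection $\nabla$ on $\hat M_0$ through \eqref{AT}. Since $\nabla$ is flat and torsion free, there are local flat coordinates $u^1,\dots,u^n$ with $\nabla_{\qp_\qa}\qp_\qb=0$, so $A^\ql_{\qa\qb}=0$. By \eqref{AT} this means
\[
\Qg^{\ql\qg}_\qa=g^{\qg\mu}\qp_\qa g^{\ql\mu'}g_{\mu\mu'}\quad\text{(up to index placement),}
\]
so $\Qg$ is determined by $g$.

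\emph{Step 2: rescale the odd coordinates.} In these coordinates I will change only the odd variables, $\bar\qth_\qa=f_\qa^\qb(u)\qth_\qb$, which is allowed by \eqref{AM}. The term $\bar g^{\qa\qb}\bar\qth^1_\qb$ picks up $\bar g = g f^{-1}$, with $f^{-1}$ the inverse matrix of $f=(f_\qa^\qb)$, so I choose $f_\qa^\qb=\eta_{\qa\mu}g^{\mu\qb}$, where $\eta_{\qa\mu}$ is the inverse of $\eta^{\qa\mu}$. This gives the leading term $\eta^{\qa\qb}\bar\qth^1_\qb$.

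The new $\Qg$ is again forced to satisfy \eqref{AT} with $A=0$. Since $\bar g=\eta$ is constant, this gives $\bar\Qg=0$, and hence $\diff{u^\qa}{\qt}=\eta^{\qa\qb}\bar\qth^1_\qb$. The coefficient extraction uses that the leading coefficient $g^{\qa\qb}$ transforms tensorially under \eqref{AM}, with the $\qp_x f$ terms feeding into $\Qg$; this is routine bookkeeping.

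\emph{Step 3: show $\diff{\qth_\qa}{\qt}=0$.} This is the step I expect to be the main obstacle, since the connection only sees the $u$-part. I will use the condition $[\diff{}{\qt},\diff{}{\qt}]=0$, which amounts to $\diff{}{\qt}^2=0$ for an odd field.

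Applying $\diff{}{\qt}$ twice to $u^\qa$ gives
\[
0=\diff{}{\qt}\big(\eta^{\qa\qb}\qth^1_\qb\big)=\eta^{\qa\qb}\qp_x\Big(\diff{\qth_\qb}{\qt}\Big),
\]
where I use that $\diff{}{\qt}$ commutes with $\qp_x$. Since $\eta$ is invertible, $\qp_x\big(\diff{\qth_\qb}{\qt}\big)=0$.

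The kernel of $\qp_x$ on $\hm A$ consists of constants only, and $\diff{\qth_\qb}{\qt}$ has super degree 2. Therefore $\diff{\qth_\qb}{\qt}=0$, which proves the lemma. The same argument shows that the $Q,V$ terms in \eqref{AR} are in general determined by the $u$-part.
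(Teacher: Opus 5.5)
Your proposal is correct. Steps 1 and 2 match the paper: you pass to flat coordinates of $\nabla$ and then apply the same odd rescaling as the paper's $\qth_\qa\mapsto T^\qb_\qa\bar\qth_\qb$ with $T^\qb_\qa=g_{\ql\qa}\eta^{\ql\qb}$. The only real difference is the last step: the paper cites Lemma 4.8 of \cite{lorenzoni2026generalised} to get $\diff{\bar\qth_\qa}{\qt}=0$, whereas you derive it directly from $\bigl(\diff{}{\qt}\bigr)^2u^\qa=0$ and $\eta^{\qa\qb}\qp_x\bigl(\diff{\qth_\qb}{\qt}\bigr)=0$. This makes the lemma self-contained, using that $\qp_x$ is injective on $\hm A_{\geq 1}$ (the top-order-jet argument applied to even and odd jets alike).
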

\begin{proof}
    By choosing flat coordinates of the connection $\nabla$ defined by $\diff{}{\qt}$, we may assume, after using the relations \eqref{AT}, that 
    \[
    \diff{u^\qa}{\qt} = g^{\qa\qb}(u)\qth_\qb^1+\qp_\qg g^{\qa\qb}(u)u^\qg_x\qth_\qb.
    \]
    For an arbitrary constant non-degenerate matrix $\eta^{\qa\qb}$, we set 
        \[
T^\qb_\qa(u) = g_{\ql\qa}(u)\eta^{\ql\qb}.
\]  
    Then after performing the change of coordinates 
    \[
    u^\qa \mapsto \bar u^\qa,\quad \qth_\qa\mapsto T^\qb_\qa(\bar u)\bar\qth_\qb,
    \]
    we arrive at 
    \[
    \diff{\bar u^\qa}{\qt} = \eta^{\qa\qb}\bar \qth_\qb^1.
    \]
Finally, it follows from Lemma 4.8 of \cite{lorenzoni2026generalised} that we must have 
\[
\diff{\bar\qth_\qa}{\qt} = 0.
\]
The lemma is proved.
\end{proof}

\begin{Th}
    \label{BM}
    For $p\geq -1$ and $d>0$, we have 
    \[H^p_d\kk{\derx,\diff{}{\qt}} = 0.\]
\end{Th}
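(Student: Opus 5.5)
The plan is to use Lemma~\ref{AS} to reduce to a normal form in which $\mathrm{ad}_{\diff{}{\qt}}$ becomes an explicit Koszul/de Rham-type differential. Then I would compute its cohomology directly on components and kill a cocycle in two steps: first its odd components, then its even ones.

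\emph{Normal form and the component formula.} By Lemma~\ref{AS}, taking $\eta^{\qa\qb}=\qd^{\qa\qb}$, we may choose local coordinates in which $\diff{u^\qa}{\qt}=\qth_\qa^1$ and $\diff{\qth_\qa}{\qt}=0$. This requires working on a contractible chart and then gluing; since the statement is local in nature, I would first establish it locally. On $\hm A$ the flow $P=\diff{}{\qt}$ acts as the odd derivation
\[
D=\sum_{s\geq 0}\qth_\qa^{s+1}\diff{}{u^{\qa,s}} ,
\]
which raises differential degree by one and satisfies $D^2=0$. An evolutionary field $X\in\derx^p_d$ is determined by its components $X^\qa=X(u^\qa)\in\hm A^p_d$ and $X_\qa=X(\qth_\qa)\in\hm A^{p+1}_d$. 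A direct computation gives
\[
[P,X](u^\qa)=D(X^\qa)-(-1)^p\qp_x X_\qa,\qquad [P,X](\qth_\qa)=D(X_\qa).
\]

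\emph{Cohomology of $(\hm A,D)$.} The key step is to show that $H(\hm A_d,D)=0$ for $d>0$. The generators split into three groups.
\begin{itemize}
\item The pairs $(u^{\qa,s},\qth_\qa^{s+1})$ with $s\geq 1$ are polynomial Koszul pairs. Each is contractible via the homotopy $\sum u^{\qa,s}\diff{}{\qth^{s+1}_\qa}$, combined with the standard weight (Euler-operator) argument, which applies degree by degree since each $\hm A_d$ is finite-dimensional in the jet variables over $C^\infty(u)$.
\item The pair $(u^\qa,\qth_\qa^1)$ gives $C^\infty(u)[\qth^1]$ with the de Rham differential. By the Poincaré lemma on the chart, its cohomology is just the constants.
\item The variables $\qth_\qa$, of differential degree $0$, are untouched by $D$.
\end{itemize}
By a Künneth argument, $H(\hm A,D)\cong\mathbb R[\qth_1,\dots,\qth_n]$, which is concentrated in differential degree $0$. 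Hence every $D$-closed element of $\hm A_d$ with $d>0$ is $D$-exact, with primitive in $\hm A_{d-1}$.

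\emph{Killing a cocycle.} Let $[P,X]=0$ with $X\in\derx^p_d$ and $d>0$.
\begin{itemize}
\item First, $D(X_\qa)=0$ gives $X_\qa=D(Y_\qa)$ with $Y_\qa\in\hm A^{p}_{d-1}$. Let $Y$ be the evolutionary field with $Y(\qth_\qa)=Y_\qa$ and $Y(u^\qa)=0$. Replacing $X$ by $X-[P,Y]$ (up to the appropriate sign) kills the odd components. This adds only a total-derivative correction to the $u$-components and stays inside the cocycles.
\item The new cocycle condition reads $D(X^\qa)=0$ with $X^\qa\in\hm A^p_d$, so $X^\qa=D(Z^\qa)$. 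Let $Z$ be the field with $Z(u^\qa)=Z^\qa$ and $Z(\qth_\qa)=0$. Then $[P,Z]$ has $u$-components $D(Z^\qa)$ and $\qth$-components $0$, so $X=[P,Z]$.
\end{itemize}
Edge cases need a separate check: for $p=-1$ there are no $u$-components, and for $d=1$ the primitives lie in differential degree $0$ and are still legitimate.

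\emph{Main obstacle.} The main obstacle is the second step. One must make the Koszul/Poincaré contraction rigorous for the ring $C^\infty(u)[[\dots]]$, where the coefficients are smooth functions rather than polynomials. One must also pass from the local statement to the global one, for example via a Čech/partition-of-unity argument using that the local homotopies are compatible up to the transition formulas \eqref{AM}, \eqref{AO}. If a global statement is not needed, I would state and prove the theorem on a contractible chart.
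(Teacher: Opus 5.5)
Your proof is correct, but it takes a different route from the paper's.

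\textbf{The paper's route.} After normalizing with Lemma~\ref{AS}, the paper never computes the cohomology of $D=\partial/\partial\tau$ on $\hat{\mathcal A}$. Instead it proceeds through local functionals:
\begin{enumerate}
\item It writes $\partial/\partial\tau=D_P$ with $P=\frac12\int\eta^{\alpha\beta}\theta_\alpha\theta^1_\beta$, and uses \eqref{AW} to turn the cocycle condition into $[P,\int Y^\alpha]=0$.
\item It invokes the known triviality of the variational Poisson cohomology of a hydrodynamic Hamiltonian structure (Getzler's Darboux theorem) to get $\int Y^\alpha=[P,\int Q^\alpha]$.
\item It lifts this to $Y^\alpha=\partial Q^\alpha/\partial\tau+(-1)^p\eta^{\alpha\beta}\partial_x R_\beta$, and recovers $Z_\alpha=\partial R_\alpha/\partial\tau$ from the injectivity of $\partial_x$ in positive degree.
\end{enumerate}

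\textbf{Your route.} You stay on $\hat{\mathcal A}$. Componentwise, $\mathrm{ad}_P$ is the mapping cone of $\pm\partial_x$ acting on $(\hat{\mathcal A},D)$. The differential $D$ is a vertical de Rham differential whose cohomology is $\Lambda[\theta_1,\dots,\theta_n]$, concentrated in differential degree $0$. Your two-step killing then finishes the job.

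\textbf{Comparison.} Your route is more elementary and self-contained. It replaces a nontrivial theorem about the quotient $\hat{\mathcal A}/\partial_x\hat{\mathcal A}$ by the Poincar\'e lemma plus an Euler homotopy. It is also the same mapping-cone technique the paper itself uses later for $(E_0,\Delta_0)$. The paper's route, in exchange, shows directly the equivalence with variational Hamiltonian cohomology recorded in the remark after the theorem.

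\textbf{Your listed obstacles.} The obstacle about smooth coefficients is not a real one. Each $\hat{\mathcal A}_d$ is a free $C^\infty(u)$-module of finite rank, and only finitely many jet variables enter in a given degree. So the K\"unneth argument over $\mathbb R$, the smooth Poincar\'e lemma on a star-shaped chart, and the algebraic Euler homotopy all apply verbatim.

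You should simply drop the globalization step. The paper's proof is equally local, since it picks flat coordinates via Lemma~\ref{AS}. A global version fails in general. Take $\hat M_0=S^1$ with a periodic flat coordinate $u$ and $\bar\theta=\theta$. The cocycle $\partial_x=[\partial/\partial\tau,Y]$, with $Y(u)=0$ and $Y(\theta)=u$, lies in differential degree $1$ but is only locally exact. This reflects the de Rham cohomology of $\hat M_0$, which enters $H(\hat{\mathcal A},D)$ globally.
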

\begin{proof}
    It follows from Lemma \ref{AS} that we can choose a  constant non-degenerate symmetric matrix $\eta^{\qa\qb}$ together with suitable local coordinates $(u^1,\dots,u^n,\qth_1,\dots,\qth_n)$ of $\hat M$ such that 
    \[
    \diff{u^\qa}{\qt} = \eta^{\qa\qb}\qth_\qb^1 = D_P(u^\qa),\quad \diff{\qth_\qa}{\qt} = 0 = D_P(\qth_\qa),
    \]
    where  
    \[
    P  =\frac12\int\eta^{\qa\qb}\qth_\qa\qth_\qb^1,
    \]
    and $D_P\in\derx^1_1$ is as defined in \eqref{AQ}.
    Pick a cocycle $X\in\derx^p_d$ with $X(u^\qa) = Y^\qa\in \hm A^p_d$ and $X(\qth_\qa) = Z_\qa\in\hm A^{p+1}_d$. Then it is easy to see that the cocycle condition reads 
    \begin{align}
        \label{AAF}
    \eta^{\qa\qb}\qp_x Z_\qb = (-1)^p\diff{Y^\qa}{\qt}.
    \end{align}
    If $p=-1$, we must have $Y^\qa = 0$ and hence $Z_\qa = 0$ since $d>0$. We conclude that 
    \[
H^{-1}_d\kk{\derx,\diff{}{\qt}} = 0,\quad d>0,
    \]
    and in what follows we assume $p\geq 0$. 

    Using the identity \eqref{AW}, the cocycle condition \eqref{AAF} implies that 
    \[
    0  = \int D_P(Y^\qa) = [P,\bar Y^\qa],\quad \bar Y^\qa = \int Y^\qa,
    \]
    and therefore it follows from the triviality of the Hamiltonian cohomology of $P$ that there exist differential polynomials $ Q^\qa\in\hm A^{p-1}_{d-1}$ such that 
    \[
    \bar Y^\qa = \fk{P}{\bar Q^\qa},\quad \bar Q^\qa = \int Q^\qa,
    \]
    in another word, there exist other differential polynomials $R_\qa\in\hm A^p_{d-1}$ such that 
    \[
    Y^\qa = \diff{Q^\qa}{\qt}+(-1)^p\eta^{\qa\qb}\qp_x R_\qb.
    \]
    Using the condition \eqref{AAF}, we arrive at 
    \[
    Z_\qa = \diff{R_\qa}{\qt},\quad Y^\qa = \diff{Q^\qa}{\qt}-(-1)^{p-1}\eta^{\qa\qb}\qp_x R_\qb,
    \]
    from which we conclude that 
    \[
    X  =\fk{\diff{}{\qt}}{O},
    \]
    where $O\in\derx^{p-1}_{d-1}$ is defined by 
    \[
    O(u^\qa) = Q^\qa,\quad O(\qth_\qa) = R_\qa.
    \]
    Hence, the cocycle $X$ is also a coboundary and the theorem follows.
\end{proof}

\begin{Rem}
    The above theorem is equivalent to the triviality of the variational cohomology groups for Hamiltonian structures of hydrodynamic type \cite{liu2023variational}.
\end{Rem}

\begin{Cor}
    \label{BX}
    Any deformation of $\diff{}{\qt}$ is trivial, namely any deformation of $\diff{}{\qt}$ can be eliminated by performing a generalized Miura-type transformation.
\end{Cor}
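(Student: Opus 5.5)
The plan is the standard inductive elimination of the deformation, degree by degree in the differential degree, using Theorem \ref{BM} with $p=1$ and $p=2$. Let $\tilde P=\diff{}{\qt}+\sum_{d\geq 2}Q_d$ with $Q_d\in\derx^1_d$ and $[\tilde P,\tilde P]=0$. Suppose $Q_2=\dots=Q_{d-1}=0$ has already been achieved, and let $d\geq 2$ be the lowest degree with $Q_d\neq0$. The component of $[\tilde P,\tilde P]=0$ in differential degree $d+1$ is $2[\diff{}{\qt},Q_d]=0$, because all other contributions have degree at least $2d\geq d+2$. Hence $Q_d$ is a cocycle in $\derx^1_d$. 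Since $d>0$, Theorem \ref{BM} gives $H^1_d(\derx,\diff{}{\qt})=0$, so there is $O\in\derx^0_{d-1}$ with $Q_d=[\diff{}{\qt},O]$.

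Next we kill $Q_d$ by a transformation. Take $T=-O$ in $\derx^0_{d-1}$, which has $d-1\geq1$. By the Lemma from \cite{liu2023variationalreci} recalled above, the flow $\exp(\mathrm{ad}_{T})$, interpreted with the paper's sign convention $\exp(-\mathrm{ad}_T)$, corresponds to a generalized Miura-type transformation. The transformed structure is
\[
\tilde P-[O,\tilde P]+\cdots=\tilde P-[\diff{}{\qt},O]+(\text{terms of degree}\geq d+1),
\]
up to the sign bookkeeping for the graded commutator $[O,\diff{}{\qt}]$ with $O$ even. Choosing the sign of $O$ appropriately cancels $Q_d$ exactly. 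The new structure still squares to zero and now starts at degree $d+1$. Induction then removes all corrections degree by degree. The composition of the infinitely many transformations converges in the formal topology on $\hm A$, because the $d$-th transformation differs from the identity only in differential degree $\geq d-1\to\infty$. Each step affects only degrees $\geq d$, so every graded component stabilises after finitely many steps.

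The only genuinely delicate point is keeping the limiting infinite composition well defined and checking that it is again of the form \eqref{AV}, with $F\in\hm A^0_{\geq1}$ and $G\in\hm A^1_{\geq1}$. I would handle this by noting that the composite $u^\qa\mapsto u^\qa+f^\qa$, $\qth_\qa\mapsto\qth_\qa+g_\qa$ has $f^\qa,g_\qa$ determined in each differential degree by finitely many steps. Alternatively, one can compose the flows via Baker–Campbell–Hausdorff in the pro-nilpotent Lie algebra $\derx^0_{\geq1}$, which gives a single $T$.

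The obstruction group $H^2$ is not even needed here, since we start from an honest deformation rather than an infinitesimal one. The vanishing of $H^2_d$ from Theorem \ref{BM} would additionally show that every infinitesimal deformation extends, but that is not needed for this corollary.
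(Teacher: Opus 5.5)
Your proposal is correct and matches the paper: Corollary \ref{BX} is stated there without proof as an immediate consequence of Theorem \ref{BM} ($H^1_d=0$ for $d\geq 2$), and your argument is the standard one behind that step, namely degree-by-degree removal of the lowest correction via $\exp(-\mathrm{ad}_T)$ followed by convergence of the composed transformations. Two slips are cosmetic. First, the terms other than $2[\diff{}{\qt},Q_d]$ have degree $\geq d+2$, not $\geq 2d$. Second, for your sign, $[O,\diff{}{\qt}]=-[\diff{}{\qt},O]$, so $\exp(\mathrm{ad}_O)\tilde P=\tilde P-[\diff{}{\qt},O]+\cdots$ when $T=-O$; neither affects the argument.
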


\subsection{Classification of scalar semisimple generalized bi-Hamiltonian structure}
\label{BY}
In what follows, we will concentrate on the scalar semisimple generalized bi-Hamiltonian structure. By taking the corresponding canonical coordinates $(u,\qth)$, any scalar semisimple generalized bi-Hamiltonian structure takes the form 
\begin{align}
    \label{AZ}
\diff{u}{\qt_0} &=\qth^1+f(u)u_x\qth,\quad \diff{\qth}{\qt_0} = f(u)\qth\qth^1,\\
\label{BA}
\diff{u}{\qt_1} &=u\qth^1+\kk{1+ug(u)}u_x\qth,\quad \diff{\qth}{\qt_1} = \kk{1+ug(u)}\qth\qth^1,
\end{align}
for some smooth functions $f(u)$ and $g(u)$. Let us define the function
\begin{equation}
    \label{BC}
\bar K(u) = 1+u\left(g(u)-f(u)\right).
\end{equation}
Note that this function is well-defined since the canonical coordinates are unique.
\begin{Lem}
    \label{BH}
    If $\bar K(u) = \qa$ is a constant function, then there exist local coordinates $(v,\qs)$ such that 
\begin{align}
    \label{AX}
\diff{v}{\qt_0} &=\qs^1,\quad \diff{\qs}{\qt_0} = 0,\\
\label{AY}
\diff{v}{\qt_1} &=\tilde f(v)\qs^1+\qa \tilde f'(v)v_x\qs,\quad \diff{\qs}{\qt_1} = \qa \tilde f'(v)\qs\qs^1,
\end{align}
for some non-constant smooth function $\tilde f(v)$. In particular, when $\qa = \frac12$, we have 
\[
\diff{}{\qt_0} = D_{P_0},\quad \diff{}{\qt_1} = D_{P_1},
\]
 where 
    \[
    P_0 = \frac12\int \qs\qs^1,\quad P_1 = \frac12\int \tilde f(v)\qs\qs^1.
    \]
\end{Lem}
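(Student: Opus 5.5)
The plan is to construct $(v,\qs)$ explicitly from the canonical coordinates $(u,\qth)$ of \eqref{AZ}, \eqref{BA} by a point transformation of the form allowed in \eqref{AM}:
\[
u=\varphi(v),\qquad \qth = h(v)\qs,
\]
and then to read off the resulting form of $\diff{}{\qt_1}$. Since the lemma concerns only the leading terms, this is a finite computation on degree-one flows. It amounts to a one-dimensional instance of choosing flat coordinates for the connection $\nabla$, as in Lemma \ref{AS}, but here $\eta=1$ is fixed and we track $\diff{}{\qt_1}$ as well.

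\emph{Step 1: normalize $\diff{}{\qt_0}$.} We substitute $u_x=\varphi' v_x$ and $\qth^1 = h\qs^1 + h' v_x\qs$ into $\diff{u}{\qt_0}=\varphi'\diff{v}{\qt_0}$. Demanding $\diff{v}{\qt_0}=\qs^1$ gives two conditions: $h=\varphi'$ from the coefficient of $\qs^1$, and
\[
\varphi''+f(\varphi)\varphi'^2=0
\]
from the coefficient of $v_x\qs$. This is a second-order ODE, so it has local solutions with $\varphi'\neq 0$, and we fix one such $\varphi$. Next we compute $\diff{\qth}{\qt_0}=\diff{(h\qs)}{\qt_0}$, using $\diff{v}{\qt_0}=\qs^1$ and the identity $\qs^1\qs=-\qs\qs^1$. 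Comparing with $f\qth\qth^1=fh^2\qs\qs^1$ yields
\[
h\,\diff{\qs}{\qt_0}=(f\varphi'^2+\varphi'')\qs\qs^1=0 .
\]
Hence $\diff{\qs}{\qt_0}=0$, which gives \eqref{AX}. Alternatively, this follows from Lemma 4.8 of \cite{lorenzoni2026generalised} exactly as in the proof of Lemma \ref{AS}.

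\emph{Step 2: transform $\diff{}{\qt_1}$.} The same substitution gives
\[
\varphi'\diff{v}{\qt_1}=\varphi\varphi'\qs^1+\bigl(\varphi\varphi''+(1+\varphi g)\varphi'^2\bigr)v_x\qs .
\]
Using $\varphi''=-f\varphi'^2$, the coefficient of $v_x\qs$ becomes $\varphi'^2\bigl(1+\varphi(g-f)\bigr)=\varphi'^2\,\bar K(\varphi)$. With $\bar K=\qa$ and $\tilde f:=\varphi$ this reads $\diff{v}{\qt_1}=\tilde f\qs^1+\qa\tilde f' v_x\qs$. The function $\tilde f$ is non-constant because $\varphi'\neq0$; equivalently, the eigenvalue $u$ of $L$ is non-constant in the semisimple case. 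For the odd component we expand $\diff{(h\qs)}{\qt_1}$ and compare with $(1+\varphi g)h^2\qs\qs^1$. Using $h'=\varphi''=-f\varphi'^2$ again, we expect to obtain $\diff{\qs}{\qt_1}=\varphi'\bigl(1+\varphi(g-f)\bigr)\qs\qs^1=\qa\tilde f'\qs\qs^1$. The main obstacle is keeping the signs from the odd variables straight in this step. The cancellation should be exactly the one that produces $\bar K$, since the lemma defines \eqref{BC} as the canonical combination.

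\emph{Step 3: the case $\qa=\tfrac12$.} We compute $D_{P_0}$ and $D_{P_1}$ directly from \eqref{AQ} with $p=2$. For $P_0$ we have $\vard{P_0}{\qs}=\qs^1$ and $\vard{P_0}{v}=0$. For $P_1=\tfrac12\int\tilde f\qs\qs^1$ we get
\[
\vard{P_1}{\qs}=\tfrac12\tilde f\qs^1-\tfrac12\qp_x(\tilde f\qs)=\tilde f\qs^1+\tfrac12\tilde f'v_x\qs\cdot(-1)\cdot(-1),
\]
where the signs are to be checked carefully from anticommutation; the result should be $\tilde f\qs^1+\tfrac12\tilde f'v_x\qs$. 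We also have $\vard{P_1}{v}=\tfrac12\tilde f'\qs\qs^1$. Substituting these into \eqref{AQ} gives exactly \eqref{AX} and \eqref{AY} with $\qa=\tfrac12$, which completes the argument.
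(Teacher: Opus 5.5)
Your proof is correct and is essentially the paper's computation run in the other direction. The paper starts from \eqref{AX}, \eqref{AY}, substitutes $u=\tilde f(v)$, $\qth=\tilde f'(v)\qs$, and reads off $f=-\tilde f''/\tilde f'^2$ and $\bar K=\qa$, leaving implicit that every $f$ arises this way; you instead start from the canonical form, make that existence step explicit through the ODE $\varphi''+f(\varphi)\varphi'^2=0$, and also check the odd components and the case $\qa=\frac12$, which the paper does not write out. The only slip is the intermediate expression in Step 3, which should read $\vard{P_1}{\qs}=\frac12\tilde f\qs^1+\frac12\qp_x(\tilde f\qs)$ (the left derivative of $\tilde f\qs\qs^1$ with respect to $\qs^1$ is $-\tilde f\qs$, and $(-\qp_x)$ supplies a second sign), and this indeed gives your stated result $\tilde f\qs^1+\frac12\tilde f'v_x\qs$.
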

\begin{proof}
    It suffices to compute the function $\bar K(u)$ of the generalized bi-Hamiltonian structure \eqref{AX},\eqref{AY} in terms of the canonical coordinates. Take the local coordinates
    \[
    u = \tilde f(v),\quad \qth = \tilde f'(u)\qs,
    \]
    then a straightforward computation implies that 
    \[
    \diff{u}{\qt_0} = \qth^1-\frac{\tilde f''(v)}{\tilde f'(v)^2}u_x\qth,\quad \diff{u}{\qt_1} = u\qth^1+\kk{\qa-\frac{u\tilde f''(v)}{\tilde f'(v)^2}}u_x\qth,
    \]
    from which it is easy to see that $\bar K(u)=\qa$. The lemma is proved.
\end{proof}

Finally, we classify the scalar semisimple generalized bi-Hamiltonian structures using the function $\bar K(u)$. The classification will be justified in the next section, after we study the deformation problem.
\begin{Def}
    Let $(\diff{}{\qt_0},\diff{}{\qt_1})$ be a scalar semisimple generalized bi-Hamiltonian structure of the form \eqref{AZ}, \eqref{BA}. We call $(\diff{}{\qt_0},\diff{}{\qt_1})$ is 
    \begin{enumerate}
        \item of rigid type if $\bar K(u)$ is a non-constant function, or it is a constant function $\bar K(u) = \qa$ with $\qa\in\mathbb R\backslash \mathbb{Q}$;
        \item of $D$-type if $\bar K(u) = \frac 12$;
        \item of viscous type if $\bar K(u) = 1$;
        \item of exceptional type if otherwise.
    \end{enumerate}
\end{Def}

\begin{Rem}
    The term $D$-type is used here in a slightly more general meaning than that is used in \cite{liu2023variationalreci}.
\end{Rem}

\begin{Cor}
    \label{BG}
    If the scalar semisimple generalized bi-Hamiltonian structure $(\diff{}{\qt_0},\diff{}{\qt_1})$ is of $D$-type, then its deformation is parametrized by a single function called the central invariant. In particular, any scalar generalized bi-Hamiltonian structure with $D$-type leading terms arises from a genuine bi-Hamiltonian via the map \eqref{AQ}.
\end{Cor}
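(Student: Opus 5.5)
The plan is to reduce the statement to the known classification of deformations of genuine scalar bi-Hamiltonian structures of hydrodynamic type, using Lemma \ref{BH}, the Remark identifying the groups \eqref{BB} with variational bi-Hamiltonian cohomology, and a degree-by-degree induction.

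\textbf{Step 1: the leading term is a genuine bi-Hamiltonian structure.} By Lemma \ref{BH} with $\qa=\frac12$, we choose coordinates $(v,\qs)$ in which $\diff{}{\qt_i}=D_{P_i}$, where
\[
P_0=\frac12\int\qs\qs^1,\qquad P_1=\frac12\int\tilde f(v)\qs\qs^1 .
\]
The pair $(P_0,P_1)$ is a genuine scalar semisimple bi-Hamiltonian structure of hydrodynamic type, since $\tilde f$ is non-constant. The Remark following \eqref{BB} then identifies
\[
BH^p_d(\derx,D_{P_0},D_{P_1})
\]
with the variational bi-Hamiltonian cohomology of $(P_0,P_1)$ in the sense of \cite{liu2022variational,liu2023variational}. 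From the known computations for the scalar case (Liu--Zhang, Carlet--Posthuma--Shadrin), I will use the following.
\begin{enumerate}
\item $BH^1_{\geq 2}$ is concentrated in $d=3$ and is isomorphic to the space of functions $c(v)$ of one variable, namely the central invariant.
\item The obstruction groups $BH^2_{\geq 4}$ relevant to extending deformations vanish, in the sense that every infinitesimal deformation extends.
\end{enumerate}
In particular, every function $c$ is realized by a genuine deformation $(P_0,P_1+\sum_{k\geq 1}Q_k)$, and its image under \eqref{AQ} is a generalized deformation.

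\textbf{Step 2: every generalized deformation is Miura-equivalent to a genuine one.} Let $(\tilde P_0,\tilde P_1)=(D_{P_0}+Q_0,\,D_{P_1}+Q_1)$ be a generalized deformation. I will construct inductively in $d$ a genuine deformation $(\tilde{\mathcal P}_0,\tilde{\mathcal P}_1)$ of $(P_0,P_1)$ and a generalized Miura-type transformation $T$ such that
\[
\exp(-\mathrm{ad}_T)\tilde P_i \equiv D_{\tilde{\mathcal P}_i} \pmod{\derx^1_{>d}} .
\]
Suppose this holds up to degree $d$. Let $\Delta_i$ be the degree-$(d+1)$ part of $\exp(-\mathrm{ad}_T)\tilde P_i-D_{\tilde{\mathcal P}_i}$. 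Comparing the degree-$(d+1)$ parts of the identities $[\tilde P_i,\tilde P_j]=0$ and $[D_{\tilde{\mathcal P}_i},D_{\tilde{\mathcal P}_j}]=0$ shows that $(\Delta_0,\Delta_1)$ is a cocycle for the leading pair $(D_{P_0},D_{P_1})$: the lower-order cross terms cancel because both sides agree below degree $d+1$.

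By Step 1, the class of this cocycle is represented by $(D_{R_0},D_{R_1})$ for Hamiltonian local functionals $R_i$; when $d+1\neq 3$ it is even a coboundary. Write the coboundary part as $\mathrm{ad}_{D_{P_i}}O$ with $O\in\derx^0_{d}$. Replacing $T$ by its composition with $\exp(O)$ kills the coboundary part modulo higher degrees. Replacing $\tilde{\mathcal P}_i$ by $\tilde{\mathcal P}_i+R_i$ absorbs the remaining part; this uses the vanishing of obstructions to keep $\tilde{\mathcal P}_i$ a genuine deformation at higher order. Passing to the limit in the $\hm A_{\geq d}$-adic topology gives the genuine structure and proves the second assertion of the corollary.

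\textbf{Step 3: parametrization by the central invariant.} By Step 2, every generalized deformation class contains a genuine one. It remains to check that two genuine deformations are equivalent under generalized Miura-type transformations only if their central invariants agree. The central invariant is read off from the class in $BH^1_3$ of the first nontrivial term. Because the cohomology groups \eqref{BB} coincide with the variational ones, a generalized Miura transformation acting on a genuine deformation cannot alter this class. Hence the central invariant is a complete invariant, and every function occurs by Step 1.

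\textbf{Main obstacle.} The delicate point is the precise use of the identification in the Remark inside the induction of Step 2. A $\mathrm{ad}_{D_{P_0}}\circ\mathrm{ad}_{D_{P_1}}$-coboundary built from an arbitrary $O\in\derx^{-1}$ must be matched with a Hamiltonian-type coboundary; otherwise the correction would leave the image of \eqref{AQ}. I would first try to settle this by mimicking the proof of Theorem \ref{BM}. That proof converts a cocycle condition into $\int D_{P_0}(Y)=[P_0,\int Y]$ via \eqref{AW}, and the same device should split a generalized cocycle into a Hamiltonian part plus an $\mathrm{ad}_{D_{P_0}}$-exact part.
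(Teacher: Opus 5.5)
Your route is the paper's. Lemma \ref{BH} with $\qa=\frac12$ puts the leading term in the form $(D_{P_0},D_{P_1})$, the groups \eqref{BB} become variational bi-Hamiltonian cohomology, and everything else is imported; your Steps 2--3 are the standard unpacking of that import. The gap is in \emph{what} you import. Both steps rest on the comparison statement that $R\mapsto D_R$ induces isomorphisms
\[
BH^2_d\kk{\hm F,P_0,P_1}\longrightarrow BH^1_d\kk{\derx,D_{P_0},D_{P_1}},\qquad d\geq 2 .
\]
Surjectivity (vanishing for $d\neq 3$, and a Hamiltonian representative of every class for $d=3$) is what you use in Step 2, when you write the degree-$(d+1)$ cocycle as $D_R$ plus a coboundary. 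Injectivity for $d=3$ is what you use in Step 3, to say that a generalized Miura-type transformation cannot kill or change a genuine central invariant. Neither half follows from the Liu--Zhang or Carlet--Posthuma--Shadrin computations. Those live on $\hm F$, where only ordinary Miura-type transformations act, whereas $\derx$ has more cocycles and more coboundaries. This comparison is exactly what the paper takes from \cite{liu2023variational,liu2023variationalreci}. The vanishing part can also be read off from the weights in the proofs of Propositions \ref{BJ} and \ref{BK} at $\bar K=\frac12$. There, the only zero-weight monomial with $\hat d(\fm)\neq 0$, in either complex, is $u_{xx}$. So for $d\geq 2$, super degree $1$ survives only at $d=3$, and there as at most $C^\infty(u)\qth^3$.

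Your stated main obstacle is misplaced. Coboundaries never need to be Hamiltonian: in your own Step 2 they are removed by generalized Miura-type transformations on the generalized side, and only the limit is compared with $D_{\tilde{\mathcal P}_i}$. What must be Hamiltonian is a representative of the nontrivial class in degree $3$. The device of Theorem \ref{BM} will not supply it, because that argument closes only thanks to the acyclicity of $H(\hm F,P)$ for a single Hamiltonian structure, and the bi-Hamiltonian cohomology on $\hm F$ is not acyclic. Finally, drop the claim that $BH^2_{\geq 4}\kk{\derx,D_{P_0},D_{P_1}}$ vanishes. It is not needed: for a deformation starting in degree $3$ the obstructions sit in degree $\geq 6$, and all you actually use is the existence of a genuine deformation with prescribed central invariant, which is known on $\hm F$. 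It may also be false, since the same weight count leaves room for a class at $(p,d)=(2,4)$.
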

\begin{proof}
    It follows from Lemma \ref{BH} that, after choosing suitable local coordinates, we have 
    \[
    \diff{}{\qt_0} = D_{P_0},\quad \diff{}{\qt_1} = D_{P_1}
    \]
    for some scalar semisimple bi-Hamiltonian structure $(P_0,P_1)$. It follows that
    \[
     BH^p_d\left(\derx,\diff{}{\qt_0},\diff{}{\qt_1}
    \right) = VBH^p_d\left(\derx,P_0,P_1\right),
    \]
    where the right-hand side is called the variational bi-Hamiltonian cohomologies as introduced and studied in \cite{liu2023variational}. Then the corollary follows from the results presented in \cite{liu2023variational,liu2023variationalreci}.
\end{proof}

\subsection{Deformation of scalar semisimple generalized bi-Hamiltonian structure}
In this section, we prove Theorem \ref{AG} and Theorem \ref{AH} by computing corresponding cohomology groups \eqref{BB}. In what follows, we will use the following fact repeatedly.
\begin{Lem}[\cite{liu2013bihamiltonian}, Lemma 4.4]
    \label{BZ}
    Let $(\derx,\qp_0,\qp_1)$ be a cochain bicomplex with the differentials 
    \[
    \qp_0,\qp_1\colon \derx^p_d\to\derx^{p+1}_{d+1}
    \]
    such that $H^p_{> 0}(\derx,\qp_0) = 0$ for any $p\geq -1$. Then 
    \[
    \derx[\ql] = \derx\otimes\mathbb R[\ql],\quad \qp_\ql = \qp_1-\ql\qp_0
    \]
    defines a cochain complex with 
    \[
    BH^p_d\left(\derx,\qp_0,\qp_1
    \right)\cong H^p_d\left(\derx[\ql],\qp_\ql\right),\quad d\geq 2
    \]
    where $BH^p_d(\derx,\qp_0,\qp_1)$ is similarly defined as in \eqref{BB}, and the isomorphism above is induced from the natural embedding 
    \[
    \derx\to\derx[\ql].
    \]
    A similar result hold true if we replace the space $\derx$ by $\hm A$.
\end{Lem}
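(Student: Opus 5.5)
This is the standard lemma of Liu–Zhang, and I would prove it by a direct diagram chase, writing \(\qp_\ql=\qp_1-\ql\qp_0\). The relations \(\qp_0^2=\qp_1^2=0\) and \(\qp_0\qp_1+\qp_1\qp_0=0\) give \(\qp_\ql^2=0\). Write an element of \(\derx[\ql]^p_d\) as \(X=\sum_{k=0}^N X_k\ql^k\). The cocycle equation \(\qp_\ql X=0\) then splits into the chain
\[
\qp_1X_0=0,\qquad \qp_1X_k=\qp_0X_{k-1}\ (1\le k\le N),\qquad \qp_0X_N=0 .
\]
The natural map \(BH^p_d(\derx,\qp_0,\qp_1)\to H^p_d(\derx[\ql],\qp_\ql)\) comes from the embedding \(X\mapsto X\) (degree \(0\) in \(\ql\)). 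It is well defined: if \(X\in\ker\qp_0\cap\ker\qp_1\), then \(\qp_\ql X=0\). Moreover, if \(X=\qp_1\qp_0Y\), then
\[
X=\qp_\ql(\qp_0Y)+\ql\,\qp_0\qp_0Y=\qp_\ql(\qp_0Y).
\]

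\emph{Surjectivity.} Take a \(\qp_\ql\)-cocycle \(X\) of \(\ql\)-degree \(N\), and lower \(N\) inductively. Because \(\qp_0X_N=0\) and \(d\ge 2>0\), the hypothesis \(H^p_{>0}(\derx,\qp_0)=0\) gives \(X_N=\qp_0Y\). Replace \(X\) by \(X+\qp_\ql(\ql^{N-1}Y)\); this changes the top coefficient to \(X_N-\qp_0Y=0\). Repeating, we reach \(N=0\), so \(X=X_0\) with \(\qp_1X_0=0\) and \(\qp_0X_0=0\). In other words, every class has a representative in \(\ker\qp_0\cap\ker\qp_1\subset\derx^p_d\). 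There is one point to watch: the \(Y\) obtained here lies in degree \(d-1\ge1\), where the hypothesis still applies.

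\emph{Injectivity.} This is the main step. Suppose \(X\in\ker\qp_0\cap\ker\qp_1\) and \(X=\qp_\ql Y\) with \(Y=\sum_{k=0}^N Y_k\ql^k\in\derx[\ql]^{p-1}_{d-1}\). Comparing powers of \(\ql\) gives
\[
X=\qp_1Y_0,\qquad \qp_1Y_k=\qp_0Y_{k-1}\ (1\le k\le N),\qquad \qp_0Y_N=0 .
\]
I again reduce \(N\). From \(\qp_0Y_N=0\) in degree \(d-1\ge1\) we get \(Y_N=\qp_0Z\). Then \(Y\mapsto Y-\qp_\ql(\ql^{N-1}Z)\) kills the top coefficient without changing \(\qp_\ql Y\). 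After finitely many steps, \(Y=Y_0\) is a constant in \(\ql\) satisfying \(\qp_0Y_0=0\) and \(X=\qp_1Y_0\).

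The last step is to produce the double coboundary. Now \(Y_0\) lies in degree \(d-1\ge1\) and \(\qp_0Y_0=0\), so \(Y_0=\qp_0W\) with \(W\) of degree \(d-2\ge0\). This is exactly where \(d\ge2\) is needed, because the hypothesis is only available in positive degree. Then \(X=\qp_1\qp_0W\), and \(W\in\derx^{p-2}_{d-2}\), which gives injectivity. The case \(p-2<-1\) is harmless: there the relevant spaces vanish, so the corresponding group is zero directly.

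The bookkeeping to check is that every application of the \(\qp_0\)-acyclicity happens in strictly positive differential degree. The only place this is tight is the final step at degree \(d-1\), and it is ensured by \(d\ge2\). The same argument works verbatim with \(\hm A\) in place of \(\derx\), since only the bicomplex structure and the vanishing of \(H_{>0}(\qp_0)\) were used.
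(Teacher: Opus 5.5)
Your argument is correct, and it is the standard elimination of the top $\lambda$-coefficient used for this lemma by Liu--Zhang; the paper cites that result and does not reprove it. There is one sign slip to fix. In the injectivity step, with $Y_N=\partial_0 Z$, you must replace $Y$ by $Y+\partial_\lambda(\lambda^{N-1}Z)$, as you correctly did for surjectivity. Since $\partial_\lambda(\lambda^{N-1}Z)=\lambda^{N-1}\partial_1 Z-\lambda^{N}\partial_0 Z$, your choice $Y-\partial_\lambda(\lambda^{N-1}Z)$ doubles the top coefficient instead of killing it. Note also that these reduction steps, and not only the final one, use $\partial_0$-acyclicity in degree $d-1$, which $d\ge 2$ again covers.
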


Let us fix a scalar semisimple generalized bi-Hamiltonian structure of the form \eqref{AZ}, \eqref{BA}. Then it follows that 
    \[
    BH^p_d\left(\derx,\diff{}{\qt_0},\diff{}{\qt_1}
    \right)\cong H^p_d\left(\derx[\ql],\diff{}{\qt(\ql)}\right),\quad d\geq 2,
    \]
    with \[
    \diff{}{\qt(\ql)} = \diff{}{\qt_1}-\ql\diff{}{\qt_0}.
    \]
    Let $X\in\derx^p_d$ be a flow whose action is determined by 
    \[
    X(u) = P,\quad X(\qth) = Q,\quad P\in\hm A^p_d,\quad Q\in\hm A^{p+1}_d.
    \]
    After a straightforward computation, we arrive at 
    \begin{align*}
        \fk{\diff{}{\qt(\ql)}}{X}u =&\,\diff{P}{\qt(\ql)}-\qth^1P-(-1)^p(u-\ql)\qp_x Q-\diff{K(u,\ql)}{u}u_x\qth P\\[1em]
        &-K(u,\ql)\qth\qp_x P-(-1)^p K(u,\ql)u_xQ,\\[1em]
        \fk{\diff{}{\qt(\ql)}}{X}\qth =&\, \diff{Q}{\qt(\ql)}-(-1)^p \diff{K(u,\ql)}{u}\qth\qth^1P+K(u,\ql)\qth^1 Q-K(u,\ql)\qth\qp_x Q,
    \end{align*}
where $K(u,\ql) = 1+ug(u)-\ql f(u)$ and 
\[
\diff{}{\qt(\ql)} = \sum_{s\geq 0}\qp_x^s\kk{(u-\ql)\qth^1+K(u,\ql)u_x\qth}\diff{}{u^{(s)}}+\qp_x^s\kk{K(u,\ql)\qth\qth^1}\diff{}{\qth^s}.
\]
Therefore, let us denote by 
\[
\hm B^p_d = \hm A^p_d[\ql]\oplus\hm A^{p+1}_d[\ql],\quad \hm B= \bigoplus_{p,d}\hm B^p_d[\ql],
\]
then it follows from the above computation that 
 \[
   H^p_d\left(\derx[\ql],\diff{}{\qt(\ql)}\right)\cong H^p_d(\hm B,D),
    \]
    where the action of differential $D$ on $\hm B^p_d$ is given by 
    \begin{align*}
        D &= \begin{pmatrix}D_1 & D_2\\D_3 & D_4\end{pmatrix},\\
            D_1&=\diff{}{\qt(\ql)}-K\qth\qp_x -\kk{\qth^1+\diff{K}{u}u_x\qth},\\
    D_2&=(-1)^{p+1}(u-\ql)\qp_x+(-1)^{p+1}Ku_x,\\
    D_3&=(-1)^{p+1}\diff{K}{u}\qth\qth^1,\\
    D_4&=\diff{}{\qt(\ql)}-K\qth\qp_x+K\qth^1.
    \end{align*}
    
In what follows, we compute the cohomology groups $H^p_d(\hm B,D)$ using a similar method that is used in \cite{carlet2018deformations,carlet2018central,liu2023variational}. Firstly, we endow the complex $\hm B$ with a filtration as follows. A monomial in $\hm A$ is said to have length $\ell$ if it is of the form 
\[
h(u)u^{(k_1)}\dots u^{(k_\ell)}\qth^{i_1}\dots\qth^{i_p},\quad k_1,\dots,k_\ell \geq 1.
\]
Define the subspace $F^k\hm A^p\subseteq\hm A^p$ consisting of differential polynomials all of whose monomials have length $\ell\geq k-p$. Denote by
\[
 F^k\hm B^p =  F^k\hm A^p[\ql]\oplus F^k\hm A^{p+1}[\ql],
\]
then it is not hard to see that 
\[
\hm B^p = F^0\hm B^p\supseteq F^1\hm B^p\supseteq\dots \supseteq F^k\hm B^p\supseteq F^{k+1}\hm B^p\supseteq\dots,
\]
and each $F^k\hm B^\bullet$ is a subcomplex of $\hm B$. We denote by $(E_r,\Qd_r)_{r\geq 0}$ the corresponding spectral sequence.

We start by computing the first page $E_1 = H(E_0,\Qd_0)$ of the spectral sequence. It follows that $E_0 = \hm B$ and the action of $\Qd_0$ on $E_0^p$ reads
\[
\Qd_0 = \begin{pmatrix}(u-\ql)\hat d & (-1)^{p+1}(u-\ql)\bar\qp_x \\
    0 & (u-\ql)\hat d
\end{pmatrix},
\] 
where we denote 
\[
\hat d = \sum_{s\geq 1}\qth^{s+1}\diff{}{u^{(s)}},\quad \bar\qp_x = \qp_x-u_x\diff{}{u}.
\]
We can compute the cohomology of $E_0$ using the technique of mapping cones. Indeed, the complex $(E_0,d_0)$ is the same as the mapping cone of the cochain map 
\[
\rho\colon \left(\hm A^\bullet[\ql],(u-\ql)\hat d\right)\to \left(\hm A^\bullet[\ql],(u-\ql)\hat d\right), \quad \rho(\qo) = (-1)^p (u-\ql)\bar\qp_x\qo,\quad \qo\in\hm A^p[\ql].
\]
Therefore, a standard fact in homological algebra implies the existence of the short exact sequence 
    \xym{
    0\ar[r] & \cok\rho_*\ar[r]^{i_*} & H(E_0,\Qd_0)\ar[r]^{\pi_*} & \ker\rho_*\ar[r] & 0,
}
where $\rho_*$ is the map on cohomology groups induced by $\rho$, $i_*$ is the inclusion to the first component of $E_0$ and $\pi_*$ is the projection of the second component of $E_0$.

\begin{Lem}[\cite{carlet2018deformations}, Proposition 11]
    Denote by 
    \begin{equation}
        \label{BF}
    \hm C = C^\infty(u)[\qth,\qth^1],
    \end{equation}
    then we have 
    \[H\left(\hm A[\ql],(u-\ql)\hat d\right) = \hm C[\ql]\oplus\frac{\hat d(\hm A)[\ql]}{(u-\ql)\hat d(\hm A)[\ql]}.\]
\end{Lem}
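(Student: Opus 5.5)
The plan is to separate the factor $u-\ql$ from the operator $\hat d$. Two facts make this possible. First, $\hat d=\sum_{s\geq 1}\qth^{s+1}\diff{}{u^{(s)}}$ does not differentiate with respect to $u=u^{(0)}$, so it commutes with multiplication by $u-\ql$. Second, $u-\ql$ is a non-zero-divisor on $\hm A[\ql]$: if $(u-\ql)\qo=0$ with $\qo=\sum_k \qo_k\ql^k$, then the coefficient of the highest power of $\ql$ in $(u-\ql)\qo$ is $-\qo_{k_{\max}}$, which must vanish, so $\qo=0$. Hence
\[
\ker\big((u-\ql)\hat d\big)=\ker\hat d\big|_{\hm A[\ql]}=(\ker\hat d)[\ql],\qquad \ima\big((u-\ql)\hat d\big)=(u-\ql)\,\hat d(\hm A)[\ql].
\]
Here we use that $\hat d$ is $\ql$-linear, so it acts coefficientwise and $\hat d(\hm A[\ql])=\hat d(\hm A)[\ql]$. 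The whole computation therefore reduces to determining $\ker\hat d$ on $\hm A$ itself.

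Next we compute $H(\hm A,\hat d)$ and show it equals $\hm C=C^\infty(u)[\qth,\qth^1]$. The operator $\hat d$ is a Koszul-type differential pairing each even variable $u^{(s)}$ ($s\geq 1$) with the odd variable $\qth^{s+1}$. Take the contracting homotopy
\[
h=\sum_{s\geq 1}u^{(s)}\diff{}{\qth^{s+1}}.
\]
A direct check gives $\hat d h+h\hat d=N$, where $N$ is the Euler operator counting the total number of factors $u^{(s)}$ and $\qth^{s+1}$ with $s\geq 1$. The operators $\hat d$, $h$ and $N$ all preserve the differential degree. Each $\hm A_d$ is a finitely generated free $C^\infty(u)$-module in which only finitely many monomials occur, so $N$ acts diagonalizably with non-negative integer eigenvalues, and $N$ commutes with $\hat d$. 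Decompose $\hm A=\hm C\oplus W$, where $\hm C=\ker N$ and $W$ is the span of the positive eigenspaces. Then $\hat d$ vanishes on $\hm C$, preserves $W$, and $W$ is acyclic: if $\hat d\qo=0$ with $N\qo=m\qo$ and $m>0$, then $\qo=\hat d(m^{-1}h\qo)$. It follows that $\ker\hat d=\hm C\oplus\hat d(W)=\hm C\oplus\hat d(\hm A)$, and that $\hm C\cap\hat d(\hm A)=0$ because $\hat d(\hm A)\subseteq W$.

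Finally we assemble the pieces. The cohomology is
\[
\frac{\hm C[\ql]\oplus\hat d(\hm A)[\ql]}{(u-\ql)\hat d(\hm A)[\ql]}.
\]
The submodule $(u-\ql)\hat d(\hm A)[\ql]$ lies entirely in the second summand, so the quotient splits as $\hm C[\ql]\oplus \hat d(\hm A)[\ql]/(u-\ql)\hat d(\hm A)[\ql]$, which is the claimed formula.

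The step needing the most care is the acyclicity of $W$. One must make sure the homotopy argument works with smooth coefficients in $u$ and with formal power series in the jet variables. This is handled by working one differential degree at a time: within a fixed $\hm A_d$ everything is polynomial and finite, so the eigenspace decomposition of $N$ and the division by $m$ are legitimate.
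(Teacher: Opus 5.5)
Your proof is correct. Since $\hat d$ does not differentiate in $u$, multiplication by $u-\ql$ commutes with it and is a non-zero-divisor on $\hm A[\ql]$, so the kernel is $(\ker\hat d)[\ql]$ and the image is $(u-\ql)\hat d(\hm A)[\ql]$; your homotopy with $\hat d h+h\hat d=N$ then gives $\ker\hat d=\hm C\oplus\hat d(\hm A)$ degree by degree. The paper gives no argument of its own and only cites Carlet--Posthuma--Shadrin; what you wrote is the standard Koszul/Poincar\'e-lemma computation, specialized to a single component, which I take to be essentially the argument behind that cited result.
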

Let us proceed to compute the kernel and the cokernel of the map $\rho_*$. Take an arbitrary class $[\qo]\in H\left(\hm A[\ql],(u-\ql)\hat d\right)$ which is represented by 
\[
\qo = \varphi_1+\varphi_2\qth+\varphi_3\qth^1+\varphi_4\qth\qth^1+\hat d(\qa),\quad \qa\in\hm A[\ql],
\]
where functions $\varphi_i\in C^\infty(u)[\ql]$, then it follows that
\[
\rho_*[\qo] = (u-\ql)\kk{\varphi_2\qth^1+\varphi_3\qth^2+\varphi_4\qth\qth^2},
\]
here we use the obvious fact that $[\bar \qp_x,\hat d] = 0$ and hence the class $(u-\ql)\bar\qp_x\hat d(\qa)$ vanishes in the cohomology group. Furthermore, note that 
\[
(u-\ql)\varphi_3\qth^2 = (u-\ql)\varphi_3\hat d(u_x),\quad (u-\ql)\varphi_4\qth\qth^2 = -(u-\ql)\varphi_4\hat d(u_x\qth),
\]
from which we conclude that 
\[
\rho_*[\qo] = (u-\ql)\varphi_2\qth^1.
\]
Define the map 
\[
\phi\colon \hm C[\ql]\to \hm C[\ql],\quad \varphi_1+\varphi_2\qth+\varphi_3\qth^1+\varphi_4\qth\qth^1\mapsto (u-\ql)\varphi_2\qth^1.
\]
Summarizing the computations above, we arrive at the following result.
\begin{Th}
    There exists a spilt exact sequence 
        \xym{
    0\ar[r] & \cok\rho_*\ar[r]^{i_*} & H(E_0,\Qd_0)\ar[r]^{\pi_*} & \ker\rho_*\ar[r]\ar@/^1pc/[l]^s & 0,
}
where 
\[
\cok\rho_* = \cok\phi\oplus \frac{\hat d(\hm A)[\ql]}{(u-\ql)\hat d(\hm A)[\ql]},\quad \ker\rho_* = \ker\phi\oplus \frac{\hat d(\hm A)[\ql]}{(u-\ql)\hat d(\hm A)[\ql]},
\]
and the splitting morphism reads 
\[
s\kk{\varphi_1+\varphi_3\qth^1+\varphi_4\qth\qth^1+\hat d(\qa)} = \begin{pmatrix}(-1)^p\kk{\varphi_3u_x-\varphi_4u_x\qth+\bar\qp_x\qa}\\ \varphi_1+\varphi_3\qth^1+\varphi_4\qth\qth^1+\hat d(\qa)\end{pmatrix},
\]
here $p+1$ is the super degree of the class $\varphi_1+\varphi_3\qth^1+\varphi_4\qth\qth^1+\hat d(\qa)$.
\end{Th}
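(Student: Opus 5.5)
The plan has four steps: identify $\ker\rho_*$ and $\cok\rho_*$ concretely, check that the proposed $s$ lands in cocycles of $(E_0,\Qd_0)$, check that $\pi_*\comp s=\mathrm{id}$, and check that $s$ is well defined on classes. The short exact sequence itself is the standard mapping cone sequence already recorded before the statement, so only the identification of the outer terms and the splitting need proof.

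First, I compute $\ker\rho_*$ and $\cok\rho_*$. By the Lemma from \cite{carlet2018deformations}, the cohomology of $(\hm A[\ql],(u-\ql)\hat d)$ splits as $\hm C[\ql]\oplus \hat d(\hm A)[\ql]/(u-\ql)\hat d(\hm A)[\ql]$. The computation preceding the statement shows that $\rho_*$ acts on the first summand by $\phi$. On the second summand, $\rho_*$ sends $\hat d(\qa)$ to $(-1)^p(u-\ql)\hat d(\bar\qp_x\qa)$, using $[\bar\qp_x,\hat d]=0$. This element lies in $(u-\ql)\hat d(\hm A)[\ql]$, so it vanishes in cohomology. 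Hence $\rho_*=\phi\oplus 0$, and the stated descriptions of the kernel and cokernel follow directly. Since $\phi$ only hits multiples of $\qth^1$, one also gets $\ker\phi=\{\varphi_1+\varphi_3\qth^1+\varphi_4\qth\qth^1\}$, which is why $\varphi_2$ is absent from the formula for $s$.

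Second, I verify that $s$ lands in cocycles of $\Qd_0$. Write $\qo=\varphi_1+\varphi_3\qth^1+\varphi_4\qth\qth^1+\hat d(\qa)$. The lower component is $(u-\ql)\hat d$-closed, so the only condition is on the upper entry $y$:
\[
(u-\ql)\hat d\, y+(-1)^{p+1}(u-\ql)\bar\qp_x\qo=0.
\]
I check this termwise. Using $\hat d(u_x)=\qth^2=\bar\qp_x\qth^1$, $\hat d(u_x\qth)=-\qth\qth^2$ and $\bar\qp_x(\qth\qth^1)=\qth^1\qth^1+\qth\qth^2=\qth\qth^2$, together with $\bar\qp_x\varphi_1=0$ and $\hat d\bar\qp_x\qa=\bar\qp_x\hat d\qa$, the choice
\[
y=(-1)^p\bigl(\varphi_3u_x-\varphi_4u_x\qth+\bar\qp_x\qa\bigr)
\]
cancels each term. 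Then $\pi_*\comp s=\mathrm{id}$ holds immediately, because the second component of $s(\qo)$ is $\qo$ itself.

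Third, I check that $s$ is well defined on classes. The main issue is the $\hat d(\hm A)$ part: if $\hat d\qa=(u-\ql)\hat d\qb$, the output must change only by a $\Qd_0$-coboundary. Replacing $\qa$ by $(u-\ql)\qb$ changes $s$ by
\[
\Qd_0\begin{pmatrix}0\\ \qb\end{pmatrix}=\begin{pmatrix}(-1)^{p}(u-\ql)\bar\qp_x\qb\\ (u-\ql)\hat d\qb\end{pmatrix},
\]
using that $u-\ql$ commutes with $\bar\qp_x$. This works up to sign conventions. Any remaining ambiguity, namely that $\qa$ is only determined modulo $\ker\hat d$, should contribute elements of $\hm C$-type plus exact terms, and these are handled similarly.

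I expect the sign and parity bookkeeping, together with this well-definedness modulo $\ker\hat d$, to be the main obstacle. I would first try to handle it by choosing $\qa$ canonically, for instance in the image of a homotopy operator for $\hat d$.
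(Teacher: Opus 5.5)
Your route is the paper's: the mapping-cone sequence, then $\rho_*=\phi\oplus 0$ via $[\bar\qp_x,\hat d]=0$, $\hat d(u_x)=\qth^2$ and $\hat d(u_x\qth)=-\qth\qth^2$, then the explicit $s$. The paper stops after computing $\rho_*$ and asserts $s$ without verification. Your check that $s$ lands in $\Qd_0$-cocycles is correct. So is your computation that the choice $\qa=(u-\ql)\qb$ gives $s\bigl((u-\ql)\hat d\qb\bigr)=\Qd_0(0,\qb)$, and this holds exactly, not just up to sign conventions, because $(0,\qb)\in E_0^{p-1}$ carries the off-diagonal sign $(-1)^p$.

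The step that fails is your claim that the remaining ambiguity $\qa\mapsto\qa+\ker\hat d$ is ``handled similarly''.
\begin{itemize}
\item Replacing $\qa$ by $\qa+\hat d\qg$ leaves $\hat d\qa$ unchanged but shifts the first component of $s$ by $(-1)^p\hat d(\bar\qp_x\qg)$. The pair $\bigl(\hat d(\bar\qp_x\qg),0\bigr)$ represents $i_*$ of the class of $\hat d(\bar\qp_x\qg)$ in $\hat d(\hm A)[\ql]/(u-\ql)\hat d(\hm A)[\ql]$.
\item Since $\rho_*$ vanishes on that summand, this class survives in $\cok\rho_*$, and it is typically nonzero. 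For example, $\qg=\tfrac12u_x^2$ gives $\hat d(u_xu_{xx})$, which is $\ql$-free and nonzero, hence not in $(u-\ql)\hat d(\hm A)[\ql]$.
\item Likewise, $\qa\mapsto\qa+\varphi_2\qth$ or $\qa\mapsto\qa+\varphi_3\qth^1$ shifts $s$ by $i_*$ of a class that is nonzero unless $u-\ql$ divides $\varphi_2$ (resp.\ $\varphi_3$).
\end{itemize}
So the formula for $s$ defines a map only once the primitive $\qa$ is fixed linearly and compatibly with multiplication by $u-\ql$. The formula in the statement tacitly presupposes such a choice, and the paper does not discuss it. The canonical choice you mention at the end is therefore necessary, not a convenience.

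That choice does work. Let $N$ be the weight counting $u^{(s)}$ and $\qth^{s+1}$ for $s\geq1$. The odd derivation $h=\sum_{s\geq 1}u^{(s)}\diff{}{\qth^{s+1}}$ satisfies $\hat dh+h\hat d=N$. Hence $\tilde h=N^{-1}h$, on positive weight where $\hat d(\hm A)$ lives, is $C^\infty(u)[\ql]$-linear and satisfies $\hat d\tilde h\hat d=\hat d$. Set $\qa:=\tilde h(\hat d\qa)$. Then a class with $\hat d\qa=(u-\ql)\hat d\qb$ gets $\qa=(u-\ql)\tilde h\hat d\qb$, and your computation yields $s=\Qd_0\bigl(0,\tilde h\hat d\qb\bigr)$. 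Alternatively, use the unique $\ql$-free representative of each class and any linear section of $\hat d$ on $\hat d(\hm A)$.

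With this choice fixed, your argument is complete. Note that a splitting exists automatically for vector spaces, so the real content of the statement is this explicit formula together with the choice of $\qa$.
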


We proceed to consider the cohomology groups $H(E_1,\Qd_1)$. It is easy to see that the action of differential of $\Qd_1$ on classes of super degree $p$ is given by 
\[
\begin{pmatrix}\qd-K\qth\bar\qp_x-\qth^1 & (-1)^{p+1}(u-\ql)u_x\diff{}{u}+(-1)^{p+1}Ku_x\\[1em]0 & \qd-K\qth\bar\qp_x+K\qth^1\end{pmatrix},
\]
where $\qd$ is the vector field given by 
\[
\qd = (u-\ql)\qth^1\diff{}{u}+\sum_{s\geq 1}\sum_{j=1}^{s}\binom{s}{j}u^{(j)}\qth^{s-j+1}\diff{}{u^{(s)}}+\sum_{s\geq 1}K\qp_x^s(u_x\qth)\diff{}{u^{(s)}}+\sum_{s\geq 0}K\qp_x^s\kk{\qth\qth^1}\diff{}{\qth^s}.
\]

To consider the cohomology $H(E_1,\Qd_1)$, we first make some simplifications. Note that the components $\cok\phi$ and $\ker\phi$ of $\cok\rho_*$ and $\ker\rho_*$ contribute only classes of differential degree $d\leq 1$, therefore we can ignore them since we are only interested in cohomologies for differential degree $d\geq 2$. In particular, we have the isomorphism
\begin{equation*}
    \label{BE}
\hat d(\hm A^{p-1}_{d-1})\oplus \hat d(\hm A^{p}_{d-1})\to H^p_d(E_0,\Qd_0),\quad \begin{pmatrix}
    \hat d(\qa)\\\hat d(\qb)
\end{pmatrix}\mapsto \begin{pmatrix}
    \hat d(\qa)+(-1)^p\bar\qp_x\qb\\\hat d(\qb)
\end{pmatrix},\quad d\geq 2,
\end{equation*}
where we also identify the space $\hat d(\hm A)[\ql]/(u-\ql)\hat d(\hm A)[\ql]$ with $\hat d(\hm A)$ by setting $\ql = u$. Next, we introduce another gradation by setting 
\[
\deg_{\qth^1} u^{(s)} = 0,\quad \deg_{\qth^1}\qth^s = \qd_{s,1},\quad s\geq 0,
\]
and it follows that such a gradation is also well-defined on $E_1$. We observe that the differential $\Qd_1$ can be decomposed into three parts, namely the components that decrease, preserve or increase $\deg_{\qth^1}$, and we define a filtration on $E_1$ using the gradation $\deg_{\qth^1}-\deg_{\qth}$. This induces a second spectral sequence $E'$ whose zero page is $E'_0 = E_1$, and the differential on $E'_0$ is given by the components of $\Qd_1$ that increase $\deg_{\qth^1}$ which reads 
\begin{align*}
\Qd' &= \begin{pmatrix}\qth^1\mathcal E-\qth^1 & 0\\ 0 & \qth^1\mathcal E+\qth^1\bar K(u)\end{pmatrix},\\
 \mathcal E &= \sum_{s\geq 1}\kk{1+s\bar K(u)}u^{(s)}\diff{}{u^{(s)}}+\sum_{s\geq 2}(s-1)\bar K(u)\qth^s\diff{}{\qth^s},
\end{align*}
where we have identified $\ql$ with $u$, and 
\[
\bar K(u) = K(u,u) = 1+u(g(u)-f(u))
\]
as defined in \eqref{BC}.
\begin{Prop}
    \label{BJ}
    If the generalized bi-Hamiltonian structure is of rigid type or of viscous type, then we have
    \begin{equation*}
        \label{BD}
    H^p_d\left(\hat d(\hm A),\qth^1\mathcal E+\qth^1\bar K(u)\right) = 0,\quad p\geq -1,\quad d\geq 2.
    \end{equation*}
\end{Prop}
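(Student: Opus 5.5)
The differential $\qth^1\mathcal E+\qth^1\bar K(u)$ is multiplication by $\qth^1$ composed with the even operator $\mathcal E+\bar K$. Both $\mathcal E$ and $\bar K$ commute with multiplication by $\qth^1$, because $\mathcal E$ does not involve $\qth^1$ or $\qth$, and $\bar K$ depends only on $u$. So the plan is a homotopy argument. First check that $\hat d(\hm A)$ is stable under $\qth^1\mathcal E$ and under multiplication by $\qth^1\bar K(u)$; this holds since $\hat d$ commutes with multiplication by $\qth^1$, and $\mathcal E$ is compatible with $\hat d$ up to degree shifts that are diagonal on monomials. Then decompose by the grading relevant to $\mathcal E$. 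On a monomial
\[
h(u)\,u^{(k_1)}\cdots u^{(k_\ell)}\,\qth^{i_1}\cdots\qth^{i_q},
\]
the operator $\mathcal E+\bar K$ acts as multiplication by
\[
c=\ell+\bar K(u)\Big(\sum k_j+\sum_{i_r\ge 2}(i_r-1)+1\Big)=\ell+\bar K(u)\,N,
\]
where $N\ge 1$ is an integer determined by the differential degree $d$, the count of $\qth^0,\qth^1$ factors, and so on.

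Next, split $\hat d(\hm A)$ of fixed degree $(p,d)$ into the part with no $\qth^1$ factor and the part $\qth^1\cdot(\ldots)$. Since $(\qth^1)^2=0$, the complex is
\[
V_0\xrightarrow{\ \qth^1 (\mathcal E+\bar K)\ } V_1 \longrightarrow 0 ,
\]
with $V_0$ mapping into $V_1=\qth^1 V_0'$. Cohomology vanishes if two things hold. First, multiplication by $\qth^1$ from $\qth^1$-free elements onto the $\qth^1$-part is bijective within $\hat d(\hm A)$. Second, $\mathcal E+\bar K$ is invertible on each eigen-piece, which requires $\ell+\bar K(u)N\neq 0$ as a function with invertible values. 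The first point means checking that an element $\qth^1\qb\in\hat d(\hm A)$ with $\qb$ free of $\qth^1$ has $\qb\in\hat d(\hm A)$, modulo $\qth^1$. I would argue this using $\hat d(\hm A)=\ker\hat d$ in positive degree (acyclicity of $\hat d$ away from $\hm C$, from the lemma of \cite{carlet2018deformations}) together with the fact that $\hat d$ commutes with $\qth^1$ and does not produce $\qth^1$. The condition $d\ge2$ ensures we are away from $\hm C$.

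The main obstacle is invertibility of the multiplier $\ell+\bar K N$, where $\ell\ge 0$ and $N\ge1$. In the viscous case $\bar K=1$, so $c=\ell+N\ge1$ and the operator is invertible; the inverse $\qth^1$-homotopy $h$ (divide by $c$, then strip $\qth^1$) gives $[\,\cdot\,,h]=\mathrm{id}$. In the rigid case with irrational constant $\bar K$, $c=\ell+\bar K N$ is a nonzero constant. In the rigid case with non-constant $\bar K$, $\ell+\bar K(u)N$ can vanish at isolated points. Here I would not divide. Instead I would use that $\mathcal E$ still acts diagonally with coefficient functions, so the equation $\qth^1(\ell+\bar K N)\qb=\qth^1\gamma$ must be solved for functions of $u$. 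This works if $\ell+\bar K(u)N$ is nonvanishing on the domain. I expect the argument to need a localization or genericity assumption on the chart (working where $\bar K\notin\mathbb Q_{\le 0}$, or using that the relevant $(\ell,N)$ have fixed sign relations for $d\ge2$). Handling this carefully is where I expect the real difficulty: one must show that the monomials actually appearing in $\hat d(\hm A)^p_d$ pair $\ell,N$ so that the multiplier is never a zero divisor in $C^\infty(u)$. Otherwise one must pass to germs where $\bar K$ is non-constant, so the multiplier is a nonzero-divisor analytic-type function, and argue surjectivity separately.
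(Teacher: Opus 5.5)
Your route is the paper's. The differential is $\qth^1$ times the even operator $\mathcal E+\bar K$, which acts diagonally with weight $\ell+N\bar K(u)$; this is the paper's $w(\fm)=n-1+(\sum s_i+\sum k_j+1)\bar K(u)$ with $\ell=n-1$. One then contracts by dividing by the weight. For viscous type and for an irrational constant $\bar K$ your argument is complete. Two small remarks:
\begin{itemize}
\item Your ``first point'' needs no acyclicity of $\hat d$. Since $\hat d$ preserves $\qth^1$-free elements and $\hat d(\qth^1\alpha)=-\qth^1\hat d(\alpha)$, one has $\hat d(\hm A)=\hat d(\hm A')\oplus\qth^1\hat d(\hm A')$ directly, where $\hm A'$ is the $\qth^1$-free part.
\item You should say why dividing by the weight stays inside $\hat d(\hm A)$. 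The map $\hat d$ sends the $(\ell+1,N)$ weight piece of $\hm A$ to the $(\ell,N)$ piece and commutes with multiplication by functions of $u$. Hence on each piece $c^{-1}\hat d(\alpha)=\hat d(c^{-1}\alpha)$.
\end{itemize}

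The genuine gap is the non-constant case, which you leave open, and it cannot be closed as stated. The paper handles it only by noting that $w(\fm)$ is not identically zero and then dividing by $w(\fm)$. That division requires $w(\fm)$ to be nowhere zero, and non-constancy of $\bar K$ does not give this.

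Here is a concrete counterexample. Take $f=0$ and $g=-1$, so $\bar K=1-u$ is non-constant (rigid type), on a domain containing $u=1$.
\begin{itemize}
\item The element $\qth^1\qth^2=-\hat d(u_x\qth^1)$, of super degree $2$ and differential degree $3$, is a cocycle because $(\qth^1)^2=0$.
\item The elements of $\hat d(\hm A)$ of super degree $1$ and differential degree $2$ are exactly $h(u)\qth^2=\hat d(h(u)u_x)$.
\item The differential sends $h(u)\qth^2$ to $2\bar K(u)h(u)\qth^1\qth^2$.
\end{itemize}
So $\qth^1\qth^2$ is exact only if $1/(2\bar K)$ is smooth, which it is not.

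In general, the weights occurring in $\hat d(\hm A)$ are $\ell+N\bar K$ with $N\geq \ell+2$. The cohomology vanishes exactly when all of them are nowhere zero, i.e. when $\bar K$ omits every value in $\mathbb Q\cap(-1,0]$ (for instance when $\bar K>0$). Equivalently, one can work with germs at a point $u_0$ where $\bar K(u_0)$ lies outside that set.

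Your fallback ideas cannot succeed: neither sign relations among the pairs $(\ell,N)$ nor proving surjectivity separately when the weight is merely a non-zero-divisor. Surjectivity genuinely fails at a zero of the weight. With such a hypothesis added, your proof is complete and coincides with the paper's.
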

\begin{proof}
    First observe that the ring of differential polynomials can also be written as 
    \[
    \hm A = \hm C[[u^{(s)},\qth^{s+1}\colon s\geq 1]],
    \]
    where $\hm C$ is as defined in \eqref{BF}. Let us take a monomial 
\[
\mathfrak{m} = u^{(s_1)}\dots u^{(s_n)}\qth^{k_1+1}\dots\qth^{k_m+1},\quad s_i,\,k_j\geq 1,
\]
and a general element 
\[
\varphi = \varphi_1(u)+\varphi_2(u)\qth+\varphi_3(u)\qth^1+\varphi_4(u)\qth\qth^1\in\hm C.
\]
Using the fact that 
\[
\fk{\mathcal E}{\hat d} = -\hat d,
\]
it follows from a straightforward computation that 
\[
\left(\qth^1\mathcal E+\qth^1\bar K(u)\right)(\hat d(\varphi \mathfrak{m})) = \qth^1 w(\mathfrak{m})\hat d(\varphi\mathfrak{m}),
\]
where the weight $w(\fm)$ is given by 
\[
w(\fm) = n-1+\kk{\sum_{i=1}^n s_i+\sum_{j=1}^m k_j +1}\bar K(u).
\]
Furthermore, we have 
\[
\qth^1\hat d(\varphi\mathfrak{m}) = \varphi_1\qth^1\hat d(\fm)+\varphi_2\qth\qth^1\hat d(\fm),
\]
and hence we conclude that 
\[
\hat d(\hm A) = \bigoplus_\fm \hm C\hat d(\fm)
\]
decomposes the complex $\hat d(\hm A)$ into subcomplexes. 

Let us concentrate on the cohomology group \[H\left(\hm C\hat d(\fm),\qth^1\mathcal E+\qth^1\bar K(u)\right)\] for some fixed monomial $\fm$. If $w(\fm)$ is not identically zero, we conclude that a cocycle $\hat d(\varphi\fm)$ must satisfy 
\[
\varphi_1 = \varphi_2 = 0.
\]
However, the above computation implies that 
\[\hat d\kk{\varphi_3\qth^1\fm+\varphi_4\qth\qth^1\fm} = \kk{\qth^1\mathcal E+\qth^1\bar K}\hat d\kk{\frac{-\varphi_3\fm+\varphi_4\qth\fm}{w(\fm)}},\]
from which it follows that 
 \[H\left(\hm C\hat d(\fm),\qth^1\mathcal E+\qth^1\bar K(u)\right) = 0\]
 whenever $w(\fm)$ is not identically zero. In particular, when the generalized bi-Hamiltonian is of rigid type, namely when $\bar K(u)$ is not a constant function or an irrational constant, the weight $w(\fm)$ is always non-zero. For $\bar K(u) =1$, it is also obvious that all non-zero monomials have non-zero weights. The proposition is proved.
\end{proof}
\begin{Prop}
    \label{BK}
    If the generalized bi-Hamiltonian structure is of rigid type then we have
    \begin{equation}
        \label{BI}
    H^p_d\left(\hat d(\hm A),\qth^1\mathcal E-\qth^1\right) = 0,\quad p\geq -1,\quad d\geq 2.
    \end{equation}
    If the generalized bi-Hamiltonian structure is of viscous type, the cohomology groups \eqref{BI} vanish for all $p\geq -1, d\geq 2$ except for 
    \[
    (p,d) = (1,2),\ (2,2),\ (2,3),\ (3,3).
    \]
\end{Prop}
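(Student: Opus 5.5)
We follow the proof of Proposition \ref{BJ} verbatim, replacing the shift $+\bar K(u)$ by $-1$. Write $\hm A=\hm C[[u^{(s)},\qth^{s+1}\colon s\geq 1]]$. Then $\hat d(\hm A)=\bigoplus_\fm \hm C\,\hat d(\fm)$, where $\fm = u^{(s_1)}\cdots u^{(s_n)}\qth^{k_1+1}\cdots\qth^{k_m+1}$ runs over monomials with $s_i,k_j\geq 1$. Using $[\mathcal E,\hat d]=-\hat d$, we compute
\[
\kk{\qth^1\mathcal E-\qth^1}\hat d(\varphi\fm)=\qth^1\,\tilde w(\fm)\,\hat d(\varphi\fm),
\qquad
\tilde w(\fm)=w(\fm)-1-\bar K(u)=n-2+\Big(\sum_i s_i+\sum_j k_j\Big)\bar K(u).
\]
So each $\hm C\hat d(\fm)$ is a subcomplex. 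If $\tilde w(\fm)\not\equiv 0$, the same homotopy as before applies: cocycles must have $\varphi_1=\varphi_2=0$, and $\hat d(\varphi_3\qth^1\fm+\varphi_4\qth\qth^1\fm)$ is the image of $\hat d\kk{(-\varphi_3\fm+\varphi_4\qth\fm)/\tilde w(\fm)}$. Hence that summand is acyclic, and everything reduces to locating the monomials with $\tilde w(\fm)=0$.

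\emph{Rigid type.} Set $N=\sum s_i+\sum k_j\geq n+m$. The case $N=0$ forces $n=m=0$, i.e.\ $\fm=1$ and $\hat d(\fm)=0$, so it contributes nothing. For $N\geq 1$, if $\bar K$ is non-constant then $(n-2)+N\bar K(u)$ is a non-constant function, hence not identically zero. If $\bar K$ is an irrational constant, then $N\bar K\notin\mathbb Q$ while $n-2\in\mathbb Z$, so $\tilde w\neq 0$. Every summand is therefore acyclic, which gives \eqref{BI}.

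\emph{Viscous type.} With $\bar K=1$, the condition $\tilde w(\fm)=0$ becomes $n-2+N=0$ with $N\geq n+m$, $N\geq 1$. This leaves only two monomials:
\begin{enumerate}
\item $n=1$, $N=1$, $m=0$: $\fm=u_x$, with $\hat d(\fm)=\qth^2$;
\item $n=0$, $N=2$: either $m=1$, $k_1=2$, $\fm=\qth^3$, or $m=2$, $k_1=k_2=1$, $\fm=\qth^2\qth^2=0$.
\end{enumerate}
For $\fm=\qth^3$ we get $\hat d(\fm)=0$, since $\hat d$ only acts on the $u^{(s)}$ with $s\geq 1$; so this case contributes nothing. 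The only surviving summand is $\hm C\,\qth^2$, on which the differential vanishes, so its cohomology is $\hm C\,\qth^2$ itself. Its basis $\qth^2,\ \qth\qth^2,\ \qth^1\qth^2,\ \qth\qth^1\qth^2$ has (differential degree, super degree) equal to $(2,1),\ (2,2),\ (3,2),\ (3,3)$. These are exactly the exceptional pairs $(p,d)=(1,2),(2,2),(2,3),(3,3)$, with the convention that elements of $\hat d(\hm A)$ of super degree $p$ sit in the graded piece labelled $p$. The labelling needs care: in $E_1$ the first component $\hat d(\hm A^{p-1})$ carries super degree $p$. This should be checked against the isomorphism identifying $\hat d(\hm A^{p-1}_{d-1})\oplus\hat d(\hm A^{p}_{d-1})$ with $H^p_d(E_0,\Qd_0)$, and I expect that check to be the only delicate point.

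The main obstacle is the weight computation. One must verify that $\qth^1\mathcal E$ and multiplication by $\qth^1$ commute past $\hat d$ as claimed, including the sign from $\hat d$ acting on $\varphi\in\hm C$, where only $\varphi_3\qth^1$ and $\varphi_4\qth\qth^1$ are moved. One must also check that $\mathcal E$ acts on $\hat d(\varphi\fm)$ with eigenvalue $\sum(1+s_i\bar K)+\sum k_j\bar K-1$. This is the same identity already used in Proposition \ref{BJ}, so I take it as given; the rest is the elementary case analysis above.
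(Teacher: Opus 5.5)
Your argument is the paper's own proof: the same splitting $\hat d(\hm A)=\bigoplus_\fm\hm C\,\hat d(\fm)$, the same weight $\tilde w(\fm)=n-2+N\bar K(u)$, and the same conclusion that for $\bar K=1$ only $\hm C\,\hat d(u_x)=\hm C\,\qth^2$ survives. Your enumeration of the zero-weight monomials spells out what the paper asserts in one line, and the labelling you worry about is consistent, since the first component $\hat d(\hm A^{p-1}_{d-1})$ of $H^p_d(E_0,\Qd_0)$ lies in $\hm A^p_d$.

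One caveat applies equally to the paper. The homotopy divides by $\tilde w(\fm)$, so it needs $\tilde w(\fm)$ to be a unit of $C^\infty(u)$, not merely $\not\equiv 0$. For non-constant $\bar K$ this fails wherever $\bar K$ takes a value $(2-n)/N$. For example, $\fm=u_{xx}$ gives $\tilde w=2\bar K-1$, and $\qth^1\qth^3$ is then a cocycle that is not a coboundary as soon as $\bar K=\frac12$ somewhere in the domain. So in that case the vanishing only holds on domains where $\bar K$ avoids these values.
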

\begin{proof}
    The proof is exactly the same as that of Proposition \ref{BJ}, and the only difference is that in this case the weight function $w(\fm)$ of a monomial 
    \[
    \mathfrak{m} = u^{(s_1)}\dots u^{(s_n)}\qth^{k_1+1}\dots\qth^{k_m+1},\quad s_i,\,k_j\geq 1,
    \]
    reads 
    \[
    w(\fm) = n-2+\kk{\sum_{i=1}^n s_i+\sum_{j=1}^m k_j}\bar K(u).
    \]
    This function is again non-zero for generalized bi-Hamiltonian structures of rigid type, and this proves the first assertion. It only remains to consider the viscous case $\bar K(u) = 1$. In this case, the only non-zero cocycle with a vanishing weight is given by 
    \[
 \kk{\varphi_1(u)+\varphi_2(u)\qth+\varphi_3(u)\qth^1+\varphi_4(u)\qth\qth^1}\hat d(u_x),
    \] 
    and therefore the non-zero cohomology groups are only possible at degrees $(p,d) = (1,2),(2,2),(2,3)$ or $(3,3)$. The proposition is proved. 
\end{proof}

\begin{Th}
    \label{BO}
    If the generalized bi-Hamiltonian structure is of rigid type then we have
    \begin{equation}
        \label{BL}
       BH^p_d\left(\derx,\diff{}{\qt_0},\diff{}{\qt_1}
    \right) = 0,\quad p\geq -1,\quad d\geq 2.
    \end{equation}
    If the generalized bi-Hamiltonian structure is of viscous type, then the above cohomology groups vanish for all $p\geq -1, d\geq 2$ except for 
    \[
    (p,d) = (1,2),\ (2,2),\ (2,3),\ (3,3).
    \]
\end{Th}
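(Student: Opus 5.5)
We pass to the one-parameter complex and then run the two spectral sequences already set up. By Lemma \ref{BZ} (applicable because Theorem \ref{BM} gives $H^p_{>0}(\derx,\diff{}{\qt_0})=0$ for all $p$), for $d\geq 2$ we have
\[
BH^p_d\kk{\derx,\diff{}{\qt_0},\diff{}{\qt_1}}\cong H^p_d\kk{\derx[\ql],\diff{}{\qt(\ql)}}\cong H^p_d(\hm B,D).
\]
It therefore suffices to show that $H^p_d(\hm B,D)$ vanishes in the stated bidegrees. We use the filtration $F^k\hm B$ by length. It is a filtration by subcomplexes, and in each fixed differential degree $d$ it is finite: the length of a monomial of degree $d$ is at most $d$, so $F^k\hm B^p_d=0$ for $k>d+p$. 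Hence the spectral sequence $(E_r,\Qd_r)$ converges to $H^p_d(\hm B,D)$. If $E_2^{p}$ vanishes in degree $d$, then $H^p_d(\hm B,D)$ vanishes.

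\textbf{Step 1: reduce $E_1$.} By the split exact sequence for $H(E_0,\Qd_0)$ and the remark that $\cok\phi$ and $\ker\phi$ live only in $d\leq 1$, we have for $d\geq 2$
\[
E_1{}^p_d\cong \hat d(\hm A^{p-1}_{d-1})\oplus\hat d(\hm A^{p}_{d-1}).
\]
The identification is made through the explicit map given just after that sequence, with $\ql$ set equal to $u$.

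\textbf{Step 2: compute $H(E_1,\Qd_1)$ with the second spectral sequence.} We filter $E_1$ by $\deg_{\qth^1}-\deg_\qth$. For fixed $(p,d)$ this filtration is bounded, since $\deg_{\qth^1}\le p+1$ and the differential degree is fixed, so $E'$ converges to $H(E_1,\Qd_1)$. Its zeroth differential $\Qd'$ is diagonal with entries $\qth^1\mathcal E-\qth^1$ on the first component and $\qth^1\mathcal E+\qth^1\bar K$ on the second. Consequently
\[
E'_1\cong H\kk{\hat d(\hm A),\qth^1\mathcal E-\qth^1}\oplus H\kk{\hat d(\hm A),\qth^1\mathcal E+\qth^1\bar K}.
\]
The components must be placed at the correct shifted bidegrees: the first sits at $(p-1,d-1)$ and the second at $(p,d-1)$ relative to the $\hm A$-gradings. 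In the rigid case, Propositions \ref{BJ} and \ref{BK} make both summands vanish in the relevant degrees, so $E'_1=0$. Hence $E_2=H(E_1,\Qd_1)=0$ for $d\geq 2$, and the cohomology vanishes. In the viscous case the second summand vanishes by Proposition \ref{BJ}. The first survives only where Proposition \ref{BK} allows it, namely on classes $\varphi\,\hat d(u_x)$ with $\varphi\in\hm C$. Translating the $\hat d(\hm A)$-degrees of $\varphi\hat d(u_x)$ into $(p,d)$ of $\hm B$ gives precisely the four exceptional bidegrees $(1,2),(2,2),(2,3),(3,3)$. Outside these bidegrees, $E'_1$, hence $E_2$, hence $BH$, vanishes.

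The main obstacle is the degree bookkeeping. The identification of $E_1$ with $\hat d$-images shifts super and differential degrees, and the cohomology degrees in Proposition \ref{BK} refer to $\hat d(\hm A)$ itself. Both must be matched carefully with the $(p,d)$ of $\hm B$, for instance by checking the image of $\hat d(u_x)=\qth^2$ against the map in Step 1, so that the exceptional list comes out exactly as stated. We also need the convergence of both spectral sequences, via boundedness of the filtrations in each fixed $(p,d)$, in order to pass from $E_2=0$ to the vanishing of $H$. Both checks are routine once set up.
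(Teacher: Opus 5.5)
Your proposal is correct and follows the paper's own route. You reduce via Lemma \ref{BZ} to $H(\hm B,D)$, use the length-filtration spectral sequence to identify $E_1\cong\hat d(\hm A)\oplus\hat d(\hm A)$ in degrees $d\geq 2$, and then filter by $\deg_{\qth^1}-\deg_\qth$ so that Propositions \ref{BJ} and \ref{BK} kill $E'_1$, or confine it to $\varphi\,\hat d(u_x)$ in the viscous case. Your write-up is somewhat more careful than the paper's terse proof: you justify convergence of both spectral sequences by finiteness of the filtrations in each bidegree, and you correctly say that $E'$ computes $E_2=H(E_1,\Qd_1)$, so it is $E_2$ that vanishes, whereas the paper loosely writes $E_1=E_\infty=0$.
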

\begin{proof}
    For generalized bi-Hamiltonian structure of rigid type, it follows from  Proposition \ref{BJ} and Proposition \ref{BK} that the spectral sequence $E'$ collapses on the first page with $E'_1 = E'_\infty = 0$. Since $E'\Rightarrow E_1$,  we conclude that the spectral sequence $E$ also collapses on its first page with $E_1 = E_\infty = 0$, which implies the vanishing result \eqref{BL}. The case of viscous type follows from a same argument.
\end{proof}

\begin{Prop}
    \label{BP}
   If the generalized bi-Hamiltonian structure is of viscous type, we have 
    \[
    BH^1_2\left(\derx,\diff{}{\qt_0},\diff{}{\qt_1}
    \right) \cong C^\infty(u),
    \]
    and any class can be uniquely represented by 
    \begin{equation}
        \label{BN}
X = \fk{\diff{}{\qt_1}}{\fk{\diff{}{\qt_0}}{A}}\in\derx^1_2,
    \end{equation}
    where $A$ is a flow commuting with $\qp_x$ of super degree -1, differential degree 0 given by 
    \[
    A(u) = 0,\quad A(\qth) = a(u)\log(u_x).
    \]
    The function $a(u)$ is called the viscous central invariant of the class $X$.
\end{Prop}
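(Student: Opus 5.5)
I would compute $BH^1_2$ by tracing the spectral sequence arguments above at the single bidegree $(p,d)=(1,2)$, and then check directly that the explicit flow \eqref{BN} realizes every class.

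\emph{Step 1: bound the group by $C^\infty(u)$.} In the viscous case Proposition \ref{BJ} kills the second column of $E'_0$. By Proposition \ref{BK}, the first column contributes only through the subcomplexes $\hm C\,\hat d(u_x)$, since only the monomial $\fm=u_x$ has weight $w(\fm)=0$. The degree of the $\qth$-component (super degree $p+1=2$ in $\hm A$) forces the relevant piece of $E'_1$ in bidegree $(1,2)$ to be a pair of functions: the cocycle is $(\varphi_1+\varphi_2\qth+\varphi_3\qth^1+\varphi_4\qth\qth^1)\hat d(u_x)$, restricted to super degree $1$ and differential degree $2$ in the first entry of $E_0^1$ (which has super degree $p=1$).

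Via the isomorphism $\hat d(\hm A^{p-1}_{d-1})\oplus\hat d(\hm A^p_{d-1})\to H^p_d(E_0,\Qd_0)$, the first entry is $\hat d(\qa)$ with $\qa\in\hm A^0_1$, i.e.\ $\qa=\varphi(u)u_x$. So $E'_1$ in this bidegree is a copy of $C^\infty(u)$, spanned by $\varphi(u)\qth^2$. I will then check that the higher differentials $\Qd'_r$ and $\Qd_r$ ($r\geq 2$) into and out of this spot vanish. The sources and targets are either zero by Propositions \ref{BJ}, \ref{BK} or sit in bidegrees $(0,1)$ and $(2,3)$ that cannot hit it by degree count.

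This gives $\dim$-type control $BH^1_2\cong E_\infty\subseteq C^\infty(u)$, through Lemma \ref{BZ} and the isomorphism $H^p_d(\derx[\ql],\diff{}{\qt(\ql)})\cong H^p_d(\hm B,D)$.

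\emph{Step 2: surjectivity.} Although $A$ is not a differential polynomial (it involves $\log u_x$), the flow $[\diff{}{\qt_0},A]$ is still not polynomial, whereas $X=[\diff{}{\qt_1},[\diff{}{\qt_0},A]]$ should be. I would compute it directly. Since $\diff{}{\qt_0}(u_x)=\qth^2+\dots$, we have $\diff{}{\qt_0}(\log u_x)=(\qth^2+\qp_x(fu_x\qth))/u_x$. Applying $\diff{}{\qt_1}$ should cancel the $1/u_x$ poles exactly when $\bar K=1$: the combination $u\cdot(\cdot)-(\cdot)$ produces $u_x\cdot(\cdot)/u_x$. The leading term of $X(u)$ should be $a(u)\qth^2$ up to $\qp_x$-corrections.

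$X$ is automatically a cocycle for both differentials, because $[\diff{}{\qt_i},\diff{}{\qt_j}]=0$ and the graded Jacobi identity gives $[\diff{}{\qt_0},X]=\pm[\diff{}{\qt_1},[\diff{}{\qt_0},[\diff{}{\qt_0},A]]]=0$, using $[\diff{}{\qt_0},\diff{}{\qt_0}]=0$; the same holds for $\diff{}{\qt_1}$. Its image in $E_\infty\cong C^\infty(u)$ is the leading coefficient $a(u)$, so the map $a\mapsto[X]$ is onto.

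\emph{Step 3: injectivity and uniqueness.} If $[X]=0$, then $X=[\diff{}{\qt_1},[\diff{}{\qt_0},B]]$ with $B\in\derx^{-1}_0$. Such a $B$ has $B(u)=0$ and $B(\qth)=b(u)$, a polynomial, and then $X(u)$ has no $\qth^2$ term, forcing $a=0$. This shows that the representative \eqref{BN} is unique.

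The main obstacle is the direct computation in Step 2: verifying that the logarithmic singularities cancel precisely for $\bar K=1$, and identifying the leading coefficient of $X(u)$ correctly with the generator $\varphi\,\hat d(u_x)$ of $E'_1$ in bidegree $(1,2)$.
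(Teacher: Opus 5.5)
Your proof is correct, but it takes a different route from the paper.

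\textbf{The paper's route.} The paper does not return to the spectral sequence at $(p,d)=(1,2)$. It first checks directly that $\derx^1_2\cap\ima\,\mathrm{ad}_{\diff{}{\qt_1}}\circ\mathrm{ad}_{\diff{}{\qt_0}}=0$. It then uses Theorem~\ref{BM} to write a bi-cocycle as $X=[\diff{}{\qt_0},Y_0]=[\diff{}{\qt_1},Y_1]$ with $Y_i\in\derx^0_1$. Modulo $\mathrm{ad}_{\diff{}{\qt_i}}(\derx^{-1}_0)$ it normalizes $Y_i(u)=0$ and $Y_i(\qth)=p_i\qth^1+q_iu_x\qth$. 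Solving the resulting linear conditions on $p_i,q_i$ leaves exactly one free function $p_0$, and \eqref{BN} is then checked by hand with $a=-\tfrac12p_0$.

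\textbf{Your route.} You read off from Propositions~\ref{BJ} and~\ref{BK} that the only surviving $E'_1$-term in this bidegree is $C^\infty(u)\,\hat d(u_x)=C^\infty(u)\qth^2$, coming from the unique weight-zero monomial. So a class is detected by the $\qth^2$-coefficient of $X(u)$, and you then realize every coefficient with the quasi-trivial family.

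\textbf{Comparison.} Your approach explains why exactly one functional parameter appears, and it fits the proof of Theorem~\ref{BO}. The paper's approach is elementary and does not depend on the spectral-sequence bookkeeping, which the paper treats loosely (for instance, it discards the $d\le1$ part of $E_1$).

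\textbf{Minor points.}
\begin{enumerate}
\item ``Cannot hit it by degree count'' is not a valid reason in terms of $(p,d)$, because $(0,1)$ and $(2,3)$ are exactly the adjacent bidegrees. The claim is true if you count length-filtration degree: all of $\hm B^0_1$ lies in $F^1$, while $\varphi\qth^2$ has exact filtration degree $1$. Moreover $\Qd'$ kills $\varphi\qth^2$ because $w(u_x)=0$, and $E_2$ in bidegree $(2,3)$ sits only in filtration degree $2$. In any case your Steps~2 and~3 make this claim unnecessary: each $X_a$ is an honest cocycle, and $[X_a]=0$ forces $a=0$.
\item The leading term is $2a(u)\qth^2$, not $a(u)\qth^2$. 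For $f=0$ one finds exactly $X(u)=2a\qth^2+2a'u_x\qth^1$ and $X(\qth)=0$, consistent with Theorem~\ref{AK}.
\item In Step~3, ``no $\qth^2$-term'' is best justified by checking that $[\diff{}{\qt_1},[\diff{}{\qt_0},B]]=0$ identically for $B\in\derx^{-1}_0$.
\end{enumerate}
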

\begin{proof}
    Let us fix a scalar generalized bi-Hamiltonian structure of viscous type which is given by 
\begin{align*}
\diff{u}{\qt_0} &=\qth^1+f(u)u_x\qth,\quad \diff{\qth}{\qt_0} = f(u)\qth\qth^1,\\
\diff{u}{\qt_1} &=u\qth^1+\kk{1+uf(u)}u_x\qth,\quad \diff{\qth}{\qt_1} = \kk{1+uf(u)}\qth\qth^1.
\end{align*}
    By a straightforward computation, we see that 
    \[
    \derx^1_2\cap\ima \mathrm{ad}_{\diff{}{\qt_1}}\comp \mathrm{ad}_{\diff{}{\qt_0}} = 0, 
    \]
    and hence we have 
    \[
    BH^1_2\left(\derx,\diff{}{\qt_0},\diff{}{\qt_1}
    \right) = \derx^1_2\cap\ker \mathrm{ad}_{\diff{}{\qt_0}}\cap \ker \mathrm{ad}_{\diff{}{\qt_1}}.
    \]
    Fix a class $X$ in the cohomology group, then it follows from Theorem \ref{BM} that there exists flows $Y_0,Y_1\in\derx^0_1$ such that 
    \[
    X = \fk{\diff{}{\qt_0}}{Y_0}= \fk{\diff{}{\qt_1}}{Y_1}.
    \] 
    Note that such $Y_i$ is not unique, and has the freedom by adding elements in $\derx^0_1\cap\ima \mathrm{ad}_{\diff{}{\qt_i}}$. Indeed, for $H\in\derx^{-1}_0$ given by 
    \[
    H(u) = 0,\quad H(\qth) = h(u),
    \]
    we have 
\begin{align*}
    \fk{\diff{}{\qt_0}}{H}u &= (fh+h')u_x,\quad \fk{\diff{}{\qt_0}}{M}\qth = (fh+h')\qth^1,\\
    \fk{\diff{}{\qt_1}}{H}u &= (h+ufh+uh')u_x,\quad \fk{\diff{}{\qt_1}}{H}\qth = (h+ufh+uh')\qth^1.
\end{align*}
Therefore, we can take $Y_0,Y_1$ to be the flows given by 
\[
Y_i(u) = 0,\quad Y_i(\qth) = p_i(u)\qth^1+q_i(u)u_x\qth,\quad i = 0,1.
\]
Then we have 
\begin{align*}
        \fk{\diff{}{\qt_0}}{Y_0}u &=-p_0\qth^2-(fp_0+q_0+p_0')u_x\qth^1-q_0u_{xx}\qth-(fq_0+q_0')u_x^2\qth,\\
     \fk{\diff{}{\qt_0}}{Y_0}\qth  & = -q_0\qth\qth^2+(p_0f'-q_0'-2fq_0)u_x\qth\qth^1,\\
     \fk{\diff{}{\qt_1}}{Y_1}u &=-up_1\qth^2-\left(p_1+ufp_1+uq_1+up_1'\right)u_x\qth^1-uq_1u_{xx}\qth-(q_1+ufq_1+uq_1')u_x^2\qth,\\
     \fk{\diff{}{\qt_0}}{Y_1}\qth  & = -uq_1\qth\qth^2+(up_1f'-uq_1'-2ufq_1+fp_1-3q_1)u_x\qth\qth^1.
\end{align*}
It follows that $p_0 = up_1$ and $q_0 = uq_0$. But we also need 
\[
p_0f'-q_0'-2fq_0 = up_1f'-uq_1'-2ufq_1+fp_1-3q_1,
\]
which gives 
\[q_1(u) = \frac{1}{2}fp_1.\]
Finally, the identity \eqref{BN} can be verified directly with 
\[
a(u) = -\frac12 p_0,
\]
and $p_0(u)$ serves as a free functional parameter in the cohomology group. 
The proposition is proved.
\end{proof}
\begin{Rem}
    Note that the identity \eqref{BN} does not imply that $X$ is a trivial class, since the flow $A$ does not take values in $\hm A$. Instead, \eqref{BN} shows that any class is quasi-trivial, which resembles the quasi-triviality for deformations of ordinary semisimple bi-Hamiltonian structures.
\end{Rem}

Summarizing the results of Corollary \ref{BG}, Theorem \ref{BO} and Proposition \ref{BP}, we conclude that Theorem \ref{AG} holds true.

\begin{Th}
    For any scalar generalized bi-Hamiltonian structure of the form \eqref{AZ}, \eqref{BA}, we have 
    \[
       BH^0_d\left(\derx,\diff{}{\qt_0},\diff{}{\qt_1}
    \right) = 0,\quad d\geq 2.
    \]
    In particular, Theorem \ref{AH} holds true.
\end{Th}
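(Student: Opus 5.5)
The plan is to run the machinery of the previous subsection in super degree $p=0$ and to check that nothing survives for $d\geq 2$. No special type of $\bar K$ will be assumed. First note that $\derx^{-2}=0$, so the denominator in \eqref{BB} vanishes for $p=0$. Hence $BH^0_d$ is exactly the space of flows $X\in\derx^0_d$ with $[\diff{}{\qt_0},X]=[\diff{}{\qt_1},X]=0$. By Lemma \ref{BZ}, which applies since Theorem \ref{BM} gives $H^p_{>0}(\derx,\diff{}{\qt_0})=0$, this space is isomorphic to $H^0_d(\hm B,D)$. I will compute the latter with the length filtration spectral sequence $E$ and then with the auxiliary spectral sequence $E'$ built from $\deg_{\qth^1}-\deg_\qth$.

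For $p=0$ and $d\geq 2$, the description of $H^0_d(E_0,\Qd_0)$ shows that it is spanned by pairs coming from $\hat d(\hm A^{-1}_{d-1})\oplus\hat d(\hm A^0_{d-1})$. Since $\hm A^{-1}=0$, only the second component $\hat d(\qb)$ with $\qb\in\hm A^0_{d-1}$ remains. Here $\qb$ contains no odd variables. On the page $E'_0$ the differential on this component is $\qth^1\mathcal E+\qth^1\bar K(u)$, as in Proposition \ref{BJ}. For a monomial $\fm=u^{(s_1)}\cdots u^{(s_n)}$ (so $m=0$) and $\qb=\varphi_1(u)\fm$, that proof gives
\[
\kk{\qth^1\mathcal E+\qth^1\bar K}\hat d(\varphi_1\fm)=w(\fm)\,\varphi_1\qth^1\hat d(\fm),\qquad w(\fm)=n-1+\kk{s_1+\dots+s_n+1}\bar K(u).
\]
Here $\qth^1\hat d(\fm)\neq0$, because $\hat d(\fm)$ involves only $\qth^{s+1}$ with $s\geq1$. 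So a cocycle needs $\varphi_1w(\fm)=0$ for every monomial. In the rigid and viscous cases $w$ never vanishes, so $E'_1$ is already $0$ in super degree $0$. The collapse argument of Theorem \ref{BO} then gives $BH^0_d=0$.

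The main obstacle is the exceptional case, and the $D$-type case if one does not want to quote \cite{liu2023variational}. There $\bar K=\qa$ is a rational constant, and $w(\fm)=0$ can occur, for instance $n=1$ with $(s_1+1)\qa=0$, or more generally $n-1+(\sum s_i+1)\qa=0$ with $\qa<0$. For these finitely many weight-zero monomials in each differential degree I will go one step further in $E'$. I will compute the next differential, coming from the $\deg_{\qth^1}$-preserving part $\qd-K\qth\bar\qp_x+K\qth^1$ of $\Qd_1$, on $\hat d(\varphi_1\fm)$. The aim is to show that it is injective on these classes. The expected mechanism is that $\qd$ contains $(u-\ql)\qth^1\diff{}{u}$ together with the terms $K\qp_x^s(u_x\qth)\diff{}{u^{(s)}}$. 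Acting on $\hat d(\varphi_1\fm)$, these produce a term proportional to $\qth\,\hat d(\bar\qp_x(\varphi_1\fm))$ with a coefficient that is nonzero for $d\geq2$. That term cannot be cancelled, because no class of super degree $0$ remains in the first component.

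If this spectral-sequence bookkeeping becomes unwieldy, I will use a direct fallback. By Theorem \ref{BM}, $X=[\diff{}{\qt_0},Y]$ with $Y(u)=0$ and $Y(\qth)=h\in\hm A^0_{d-1}$. Then $[\diff{}{\qt_1},[\diff{}{\qt_0},Y]]=0$. I will extract the component of lowest $\qth$-degree, namely the coefficient of $\qth^1$ in the $u$-component. This is a linear equation of the form $(u-\ql)$-twisted Euler operator applied to $h$. I will show that it forces $h$, and hence $X$, to vanish order by order in the length filtration. The final assertion, Theorem \ref{AH}, follows immediately: a symmetry $X\in\derx^0_{\geq2}$ commuting with both structures is a sum of cocycles of the joint kernel above, hence zero. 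Uniqueness and commutativity of symmetries follow by applying this to the difference, respectively to the bracket, of two symmetries.
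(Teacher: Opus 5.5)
Your weight argument is fine where it applies. In super degree $0$ the first component of $E_1$ vanishes because $\hm A^{-1}=0$. Multiplication by $w(\fm)=n-1+(\sum_i s_i+1)\bar K(u)$ is injective whenever $w(\fm)$ has no zeros. That covers every constant $\bar K>0$, so the $D$-type needs no citation, as well as the rigid and viscous cases already contained in Theorem \ref{BO}. For non-constant $\bar K$, ``$w$ never vanishes'' is not literally true; what you need is that $w(\fm)$ does not vanish on an open set.

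The genuine gap is the exceptional case with $\bar K$ a constant in $\mathbb Q_{\le 0}$. The statement covers this case, and weight-zero classes really occur there in super degree $0$. For $\bar K=0$, for example, $\hat d(\varphi(u)u^{(d-1)})=\varphi\qth^{d}$ survives to $E'_1$ in every degree $d\ge2$.

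Your ``expected mechanism'' for killing these classes does not exist. In the $(2,2)$-entry $\qd-K\qth\bar\qp_x+K\qth^1$, the term $(u-\ql)\qth^1\diff{}{u}$ sends $\hat d(\qb)$ into $(u-\ql)\hat d(\hm A)[\ql]$, because $\qth^1\hat d(\gamma)=-\hat d(\qth^1\gamma)$; so it is zero once $\ql=u$. The $\qth$-linear parts of $K\qp_x^s(u_x\qth)\diff{}{u^{(s)}}$ and $K\qp_x^s(\qth\qth^1)\diff{}{\qth^s}$ add up exactly to $K\qth\bar\qp_x$, which cancels against $-K\qth\bar\qp_x$. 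Hence no term proportional to $\qth\,\hat d(\bar\qp_x(\varphi_1\fm))$ is produced. In the example of Sect.~\ref{AE} one even has $K\equiv0$, and the whole $(2,2)$-entry annihilates $\varphi\qth^{d}$ modulo $(u-\ql)\hat d(\hm A)[\ql]$. Whatever kills these classes must come from the off-diagonal entry or from higher differentials, and you have computed neither.

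Your fallback, as stated, also fails. In the same example take $Y(\qth)=h=c\,u^{(d-1)}$ with $c$ a nonzero constant. Then $X=\fk{\diff{}{\qt_0}}{Y}$ has $X(u)=c\,u^{(d)}$ and $X(\qth)=c\,\qth^{d}$, and
\[
\fk{\diff{}{\qt_1}}{X}(u)=c\sum_{j=1}^{d-1}\binom{d}{j}u^{(j)}\qth^{d-j+1}.
\]
Its $\qth^1$-coefficient is zero although $h\neq0$; what detects $X$ is the $\qth^{d}$-coefficient $c\,d\,u_x$. (No $\ql$ enters this direct computation, so the equation is not a $(u-\ql)$-twisted Euler operator either.)

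The missing idea is to use exactness with respect to both structures. By Theorem \ref{BM}, $X=\fk{\diff{}{\qt_0}}{P}=\fk{\diff{}{\qt_1}}{Q}$ with $P(\qth)=p$ and $Q(\qth)=q$ in $\hm A^0_{d-1}$. One then compares top-order coefficients of the differences $W$ and $Z$ of the $u$- and $\qth$-components. Suppose $p,q$ involve $u^{(s)}$ only for $s\le N$, with $N\ge2$.
\begin{itemize}
\item $\diff{W}{u^{(N+1)}}=0$ gives $\diff{p}{u^{(N)}}=u\diff{q}{u^{(N)}}$.
\item $\diff{Z}{\qth^{N}}=0$ and $\diff{W}{u^{(N)}}=0$ give two expressions for $\diff{p}{u^{(N-1)}}$, with coefficients $N+\bar K$ and $\bar K-1$ in front of $u_x\diff{q}{u^{(N)}}$.
\item Their difference $(N+1)u_x\diff{q}{u^{(N)}}$ must therefore vanish, for every $\bar K$.
\end{itemize}
Descending induction leaves $p=a(u)u_x^{d-1}$ and $q=b(u)u_x^{d-1}$. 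Then $W=0$ gives $a=ub$ and $b(1-\bar K)=0$, and the $\qth^1$-coefficient of $Z$ gives $b(2-d-d\bar K)=0$. Together these force $b=0$ for $d\ge2$. This argument is uniform in the type and needs no spectral sequence.

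Finally, your deduction of Theorem \ref{AH} needs adjusting. The homogeneous components of a symmetry of $(\tilde P_0,\tilde P_1)$ are not separately cocycles; only the lowest one commutes with $\diff{}{\qt_0}$ and $\diff{}{\qt_1}$. You must therefore argue by induction on the lowest degree.
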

\begin{proof}
    First note that 
    \[
    BH^0_d\left(\derx,\diff{}{\qt_0},\diff{}{\qt_1}
    \right) = \derx^0_d\cap\ker \mathrm{ad}_{\diff{}{\qt_0}}\cap \ker \mathrm{ad}_{\diff{}{\qt_1}}.
    \]
    Let us fix a class $X\in BH^0_d\left(\derx,\diff{}{\qt_0},\diff{}{\qt_1}
    \right)$. Using Theorem \ref{BM}, it follows that there exist unique flows $P,Q\in\derx^{-1}_{d-1}$ such that 
    \[
    X = \fk{\diff{}{\qt_0}}{P} = \fk{\diff{}{\qt_1}}{Q}, 
    \]
    where we set 
    \[
    P(u)=Q(u) = 0,\quad P(\qth) = p,\quad Q(\qth) = q,\quad p,q\in\hm A^0_{d-1}.
    \]
    We denote 
    \begin{align*}
        W &= \fk{\diff{}{\qt_0}}{P}u - \fk{\diff{}{\qt_1}}{Q}u\\
         &= (\qp_xp+f u_x p)-(u\qp_xq+u_xq+ugu_xq),\\[1em]
        Z &= \fk{\diff{}{\qt_0}}{P}\qth - \fk{\diff{}{\qt_1}}{Q}\qth\\
         &= \kk{\diff{p}{\qt_0}+fp\qth^1-f\qp_xp\qth}-\kk{\diff{q}{\qt_1}+(1+ug)q\qth^1-(1+ug)\qp_xq\qth}.
    \end{align*}
    Assume that there exists some $N\geq 2$ such that 
    \[
    \diff{p}{u^{(s)}} = \diff{q}{u^{(s)}}=0,\quad \forall\,s\geq N+1.
    \]
    Then the identity 
    \[
    \diff{W}{u^{(N+1)}} = 0
    \]
    implies that 
    \[
    \diff{p}{u^{(N)}} = u \diff{q}{u^{(N)}}.
    \]
    And the identity 
    \[
    \diff{Z}{\qth^N} = 0
    \]
    gives the relation 
    \[
    \diff{p}{u^{(N-1)}} = \left(N+\bar{K}(u)\right)u_x\diff{q}{u^{(N)}}+u\diff{q}{u^{(N-1)}},
    \]
    while from the identity 
    \[\diff{W}{u^{(N)}} = 0\]
    it follows that 
    \[
     \diff{p}{u^{(N-1)}} = \left(\bar{K}(u)-1\right)u_x\diff{q}{u^{(N)}}+u\diff{q}{u^{(N-1)}}.
    \]
    Therefore, we conclude that 
    \[
    \diff{p}{u^{(N)}} = \diff{q}{u^{(N)}} = 0,
    \]
    and by induction, we must have
    \[
    p = ur(u)u_x^{d-1},\quad q = r(u)u_x^{d-1}
    \] 
    for some function $r(u)$. Finally, we can verify that $r(u) = 0$ by a straightforward calculation. The theorem is proved.
\end{proof}

\section{Examples}
\label{AJ}
In this section, we provide some examples and applications of the theory of generalized bi-Hamiltonian structures.

\subsection{Burgers hierarchy as a reduction of 1-constrained KP hierarchy}
\label{AD}
In this section, we provide a construction of the generalized bi-Hamiltonian structure of the Burgers hierarchy from the view point of reduction of a bi-Hamiltonian integrable hierarchy.

Recall that the 1-constrained KP hierarchy is described by the Lax formalism 
\[
\diff{L}{t_k} = \fk{L^k_+}{L},\quad L = \qp_x+(\qp_x-u)^{-1}w,\quad k\geq 1,
\]
where the pseudo-differential operator 
\begin{equation}
    \label{BR}
(\qp_x-u)^{-1} = \sum_{s\geq 1}g_s\qp_x^{-s},\quad g_s = (u-\qp_x)^{s-1}1.
\end{equation}
For example, the first three flows in the hierarchy read
\begin{align*}
&\begin{cases}
\diff{w}{t_1}= w_x,\\[1em]
\diff{u}{t_1}= u_x,
\end{cases}\quad
\begin{cases}
\diff{w}{t_2}= 2uw_x+2wu_x-w_{xx},\\[1em]
\diff{u}{t_2}= 2w_x+2uu_x+u_{xx},
\end{cases}\\[1em]
&\begin{cases}
\diff{w}{t_3}= 6ww_x+6uwu_x+3u^2w_x-3uw_{xx}-3u_xw_x+w_{xxx},\\[1em]
\diff{w}{t_3}= 6wu_x+6uw_x+3u^2u_x+3u_x^2+3uu_{xx}+u_{xxx}.
\end{cases}
\end{align*}

It is well-known that constrained KP Hierarchies possess bi-Hamiltonian structures \cite{cheng1995modifying,oevel1993constrained}. In our case, the flows in the 1-constrained KP hierarchy admit the bi-Hamiltonian formalism given by 
\begin{equation}
    \label{BU}
\begin{pmatrix}\diff{w}{t_k}\\[1em]\diff{u}{t_k}\end{pmatrix} = \mathcal P_0 \begin{pmatrix}\vard{H_k}{w}\\[1em]\vard{H_k}{u}\end{pmatrix} = \mathcal P_1 \begin{pmatrix}\vard{H_{k-1}}{w}\\[1em]\vard{H_{k-1}}{u}\end{pmatrix},\quad k\geq 1,
\end{equation}
where the Hamiltonian operators read 
\[
\mathcal P_0 = \begin{pmatrix}0 & \qp_x\\\qp_x & 0\end{pmatrix},\quad \mathcal P_1 =  \begin{pmatrix}-2w\qp_x-w_x & -u\qp_x+\qp_x^2\\-u\qp_x-u_x-\qp_x^2 & -\frac12\qp_x\end{pmatrix},
\]
and the Hamiltonians are defined by 
\[
H_k = \frac{1}{k+1}\int\res L^{k+1},\quad k\geq 0.
\]

Any bi-Hamiltonian integrable hierarchy admits a super extension \cite{liu2020super,liu2022variational}. In this example, let us view $(w,u)$ as local coordinates of a 2-dimensional manifold $M$, and denote by $(w,u,\qth,\qs)$ be canonical local coordinates of   $\hat M$, then the bi-Hamiltonian structure $(P_0,P_1)$ is represented as
\[
P_0 = \int \qth\qs^1,\quad P_1 = \int -w\qth\qth^1-u\qth\qs^1+\qth\qs^2-\frac14\qth\qth^1.
\]
The 1-constrained KP hierarchy, together with its bi-Hamiltonian structure, can be viewed as a super integrable hierarchy on $J^\infty(\hat M)$ consisting of mutually commuting flows 
\[
D_{P_0},\quad D_{P_1},\quad D_{X_k},\quad k\geq 1,
\] 
where  $X_k = -[P_0, H_{k+1}]\in\hm F^1$. We still use the notation $\diff{}{t_k}$ to denote the action of $D_{X_k}$ on $\hm A$ and denote by $\diff{}{\qt_i} = D_{P_i}\in\derx^1$, then we have the following super extended 1-constrained KP hierarchy:
\begin{align*}
    &\diff{w}{t_k} =\qp_x\vard{H_{k+1}}{u},\quad \diff{u}{t_k} = \qp_x\vard{H_{k+1}}{w},\\
    &\diff{\qth}{t_k} = \diff{}{\qt_0}\vard{H_{k+1}}{w},\quad \diff{\qs}{t_k} = \diff{}{\qt_0}\vard{H_{k+1}}{u},\\
    &\diff{w}{\qt_0} = \qs^1,\quad \diff{u}{\qt_0} = \qth^1,\\
 &\diff{\qs}{\qt_0} =0,\quad  \diff{\qth}{\qt_0} = 0,\\
&\diff{w}{\qt_1} = -2w\qth^1-w_x\qth-u\qs^1+\qs^2,\quad \diff{u}{\qt_1} = -u\qth^1-u_x\qth-\frac12\qs^1-\qth^2,\\
&\diff{\qth}{\qt_1} = -\qth\qth^1,\quad \diff{\qs}{\qt_1} = -\qth\qs^1.
    \end{align*}

We observe that the 1-constrained KP hierarchy admits a reduction by setting $w(x)=0$, meaning that we set all the variables $w^{(s)} = 0$ for $s\geq 0$. Furthermore, we can explicitly describe this reduction as follows.
\begin{Lem}
    We have for $k\geq 1$
    \begin{align}
        \label{BS}
    \left.\diff{w}{t_k}\right|_{w(x)=0} &= 0,\\
    \label{BT}
\left.\diff{u}{t_k}\right|_{w(x)=0} &= \sum_{s=1}^k\binom{k}{s-1}\kk{\qp_x^{k+1-s}(ug_s)-u\qp_x^{k+1-s}g_s},
    \end{align}
    where the differential polynomials $g_s$ are as defined in \eqref{BR}.
\end{Lem}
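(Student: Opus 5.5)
We work directly with the Lax operator $L=\qp_x+(\qp_x-u)^{-1}w$ and the formula $\diff{L}{t_k}=[L^k_+,L]$. Writing $L=\qp_x+\sum_{s\geq 1}g_s\qp_x^{-s}w$, the coefficient of $\qp_x^{-1}$ in $L$ is $g_1w=w$, so $\diff{w}{t_k}=\res[L^k_+,L]$. The plan is to compute everything modulo the ideal generated by $w,w_x,w_{xx},\dots$. For \eqref{BS}, note that $L-\qp_x=(\qp_x-u)^{-1}w$ is divisible on the right by $w$. Hence every term of $L^k$ other than $\qp_x^k$ contains at least one factor of $w$. Moreover, in $[L^k_+,L]$ every term carries a factor of $w$ or one of its derivatives. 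So $\diff{w}{t_k}$ lies in the differential ideal generated by $w$, which gives \eqref{BS}. The same observation shows that the reduction $w(x)=0$ is consistent, i.e. the ideal is invariant under all flows.

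For \eqref{BT} we need the linear part in $w$, since only linear terms in $w$ can give $\diff{u}{t_k}$ after setting $w=0$. We extract $\diff{u}{t_k}$ from the equation $\diff{}{t_k}\kk{(\qp_x-u)^{-1}w}=[L^k_+,L]$. On the left, $\diff{}{t_k}(\qp_x-u)^{-1}=(\qp_x-u)^{-1}\diff{u}{t_k}(\qp_x-u)^{-1}$, so the left side is $(\qp_x-u)^{-1}\diff{u}{t_k}(\qp_x-u)^{-1}w+(\qp_x-u)^{-1}\diff{w}{t_k}$. The cleaner route is to read off the coefficient of $\qp_x^{-2}$ modulo $w^2$ (and the $w_x$-dependence). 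Equivalently, we use the linear-in-$w$ part of $[L^k_+,L]$, namely $[\qp_x^k,(\qp_x-u)^{-1}w]+[(L^k)^{(1)}_+,\qp_x]$, where $(L^k)^{(1)}=\sum_{j}\qp_x^{j}(\qp_x-u)^{-1}w\,\qp_x^{k-1-j}$ is the linear term. Since $\diff{u}{t_k}$ itself is generally nonzero at $w=0$ (as the examples show, e.g. $u_{xx}+2uu_x$ for $k=2$), we track the coefficient of $w$ undifferentiated in the $\qp_x^{-2}$ slot after dividing appropriately.

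A simpler alternative I would try first uses the gauge/Miura structure. At the linear level in $w$, $L^k_+$ contains $\qp_x^k$ plus $\sum_{j}\big(\qp_x^{j}(\qp_x-u)^{-1}w\qp_x^{k-1-j}\big)_+$. The $u$-flow is the coefficient that makes $[L^k_+,L]$ compatible with the form $(\qp_x-u)^{-1}w$. Expanding $(\qp_x-u)^{-1}=\sum g_s\qp_x^{-s}$ and using the Leibniz rule $\qp_x^k g_s\qp_x^{-s}=\sum_i\binom{k}{i}(\qp_x^ig_s)\qp_x^{k-i-s}$, the coefficient producing $\qp_x^{-1}$-type contributions yields binomial sums. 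The commutator with $u$ inside $\qp_x-u$ produces the differences $\qp_x^{k+1-s}(ug_s)-u\qp_x^{k+1-s}g_s$. I would verify the resulting formula against the listed $k=1,2,3$ flows as a check, e.g. $k=1$ gives $\qp_x(ug_1)-u\qp_x g_1=u_x$.

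The main obstacle is bookkeeping: correctly identifying which coefficient of the pseudo-differential identity determines $\diff{u}{t_k}$ modulo $w$, since $w$ appears multiplicatively. I would handle this by formally writing $\diff{}{t_k}$ applied to $(\qp_x-u)^{-1}$ and factoring the right side as $(\qp_x-u)^{-1}(\cdots)(\qp_x-u)^{-1}w$ modulo $O(w^2)$ and $\diff{w}{t_k}$-terms. Then $\diff{u}{t_k}|_{w=0}$ is the multiplication-operator part of $(\qp_x-u)[\qp_x^k,(\qp_x-u)^{-1}](\qp_x-u)$ restricted appropriately, i.e. of $[\,(\qp_x-u),\qp_x^k\,]$-type expressions. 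Expanding $[\qp_x^k,u]$ via Leibniz and re-expressing through the $g_s$ is where the binomial coefficients $\binom{k}{s-1}$ should emerge, possibly after an induction on $k$ using $g_{s+1}=(u-\qp_x)g_s$.
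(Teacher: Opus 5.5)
Your argument for \eqref{BS} is fine and is essentially the paper's. Since $L|_{w=0}=\qp_x$, every coefficient of $[L^k_+,L]$ lies in the differential ideal generated by $w$. This includes $\diff{w}{t_k}=\res[L^k_+,L]$.

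The gap is in \eqref{BT}. You correctly isolate the part of the Lax equation that is linear in the jets of $w$. But you never determine how $\diff{u}{t_k}|_{w=0}$ is read off from it (you flag this yourself as the main obstacle), and the one explicit rule you give is wrong. From $[\qp_x^k,(\qp_x-u)^{-1}]=(\qp_x-u)^{-1}[\qp_x^k,u](\qp_x-u)^{-1}$, the operator $(\qp_x-u)[\qp_x^k,(\qp_x-u)^{-1}](\qp_x-u)$ is just $[\qp_x^k,u]=\sum_{i=1}^k\binom{k}{i}u^{(i)}\qp_x^{k-i}$. Its multiplication part is $u^{(k)}$, so for $k=2$ your rule gives $u_{xx}$ instead of $u_{xx}+2uu_x$. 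The higher-order part of $[\qp_x^k,u]$ cannot be dropped, because it produces negative-order terms; for example $2u_x\qp_x(\qp_x-u)^{-1}=2u_x+2uu_x(\qp_x-u)^{-1}$.

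Your other suggestion, taking the coefficient of undifferentiated $w$ in the $\qp_x^{-2}$ slot, also misses a contribution as stated. The $\qp_x^{-2}$ coefficient of $L$ is $uw-w_x$, so $a_2=\diff{u}{t_k}w+u\diff{w}{t_k}-\qp_x\diff{w}{t_k}$. The linear part of $\diff{w}{t_k}$ is not zero (for $k=2$ it contains $2u_xw$). Without the correction $\diff{u}{t_k}\,w=a_2-ua_1+\qp_xa_1$ used in the paper, you would get $4uu_x-u_{xx}$ for $k=2$.

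Either of two fixes closes the gap. Both use that $\diff{u}{t_k}$ is a differential polynomial (no $1/w$), which the paper takes from the bi-Hamiltonian formalism.

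The first fix follows the paper. Apply $\diff{}{w}|_{w=0}$ to $\diff{u}{t_k}\,w=a_2-ua_1+\qp_xa_1$. Since $[(\cdot)_+,\qp_x]$ is a differential operator, $\diff{a_n}{w}|_{w=0}$ is the $\qp_x^{-n}$ coefficient of $[\qp_x^k,(\qp_x-u)^{-1}]$. Expanding by Leibniz and using $g_{s+1}=(u-\qp_x)g_s$ then gives \eqref{BT}.

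The second fix makes your sandwich idea exact. Multiply the linearized equation on the left by $\qp_x-u$. The term coming from $\diff{w}{t_k}$ becomes a multiplication operator, and $[\qp_x^k,w]$ and $(\qp_x-u)[(L^k)^{(1)}_+,\qp_x]$ are differential operators. Taking negative parts therefore gives
\[
\diff{u}{t_k}\Big|_{w=0}(\qp_x-u)^{-1}w=\big([\qp_x^k,u](\qp_x-u)^{-1}w\big)_-.
\]
Comparing residues, and using $\res(A\circ w)=\res(A)\,w$, yields
\[
\diff{u}{t_k}\Big|_{w=0}=\res\big([\qp_x^k,u](\qp_x-u)^{-1}\big)=\sum_{\substack{i,s\geq 1\\ i+s\leq k+1}}\binom{k}{i}\binom{k-i}{s-1}u^{(i)}\,\qp_x^{k+1-s-i}g_s .
\]
This equals \eqref{BT} once you expand $\qp_x^{k+1-s}(ug_s)-u\qp_x^{k+1-s}g_s$ by Leibniz, since $\binom{k}{i}\binom{k-i}{s-1}=\binom{k}{s-1}\binom{k+1-s}{i}$. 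This route is slightly shorter than the paper's, because it never needs $a_1$ separately.
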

\begin{proof}
    Let us fix an integer $k\geq 1$ and denote by $X = [L^k_+,L]$, which is a pseudo-differential operator of the form  
    \[
    X = \sum_{n\geq 1} a_n\qp_x^{-n},\quad a_n\in\hm A^0.
    \]
    It follows from the Lax pair that 
    \[
    \diff{w}{t_k} = a_1,\quad \diff{u}{t_k} = \frac{1}{w}\kk{a_{2}-u\diff{w}{t_k}+\diff{w_x}{t_k}}.
    \]   
    First it is easy to see that $L|_{w(x)=0} = \qp_x$, hence we arrive at  
    \[
    a_1|_{w(x)=0} = 0,
    \]
    which proves the identity \eqref{BS}. 

    Denote by 
    \[
    \hm A_{\mathrm{poly}}^0 =\mathbb R[w,u][[w^{(s)},u^{(s)}\colon s\geq 1]]
    \]
    the subspace of $\hm A^0$ consisting of differential polynomials whose coefficients are again polynomials in $u,w$. Then it follows from the bi-Hamiltonian formalism \eqref{BU} that 
    \[
    \diff{u}{t_k}\in\hm A_{\mathrm{poly}}^0,
    \]
    therefore we have 
    \[
    \left.\diff{u}{t_k}\right|_{w(x)=0} = \left.\diff{a_2}{w}\right|_{w(x)=0}-(u-\qp_x)\left.\diff{a_1}{w}\right|_{w(x)=0}
    \]
    Then the lemma is proved by a straightforward computation using  
    \[
    \left.\diff{X}{w}\right|_{w(x)=0} = \left.\fk{\diff{L^k_+}{w}}{L}\right|_{w(x)=0}+\left.\fk{L^k_+}{\diff{L}{w}}\right|_{w(x)=0}.
    \]
\end{proof}
It is easy to see that the reduction can be lifted to the super extension of the 1-constrained KP hierarchy by also setting $\qs(x)= 0$.
\begin{Lem}
    We have 
    \[
\left.\diff{\qs}{t_k}\right|_{w(x)=\qs(x)=0} = 0.
    \]
    \end{Lem}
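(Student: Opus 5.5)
We need to show $\partial\sigma/\partial t_k$ vanishes when $w^{(s)}=0$ and $\sigma^s=0$. By the super extension, $\diff{\qs}{t_k} = \diff{}{\qt_0}\vard{H_{k+1}}{u}$, and $\diff{}{\qt_0}$ acts by $\diff{w}{\qt_0}=\qs^1$, $\diff{u}{\qt_0}=\qth^1$, $\diff{\qth}{\qt_0}=\diff{\qs}{\qt_0}=0$. Writing $h=\vard{H_{k+1}}{u}\in\hm A^0$ (a differential polynomial in $w,u$ only), we get
\[
\diff{\qs}{t_k} = \sum_{s\geq 0}\kk{\qs^{s+1}\diff{h}{w^{(s)}}+\qth^{s+1}\diff{h}{u^{(s)}}}.
\]
After setting $\qs(x)=0$, the first sum disappears, since every $\qs^{s+1}=0$. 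What remains is $\qp_x$-type expression $\sum_s\qth^{s+1}\diff{h}{u^{(s)}}\big|_{w=0}$, which equals $\diff{}{\qt_0}$ applied to $h|_{w(x)=0}$, viewed as a differential polynomial in $u$ alone. Restriction to $w^{(s)}=0$ commutes with $\partial/\partial u^{(s)}$. So it suffices to prove
\[
\left.\vard{H_{k+1}}{u}\right|_{w(x)=0}\ \text{is a constant},
\]
and vanishing of $\diff{}{\qt_0}$ on constants finishes the argument.

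For this key step I would use the bi-Hamiltonian/Lax relation $\diff{w}{t_k}=\qp_x\vard{H_{k+1}}{u}$ together with \eqref{BS}. These give $\qp_x\big(\vard{H_{k+1}}{u}|_{w(x)=0}\big)=0$; note again that restriction commutes with $\qp_x$ because the set $\{w^{(s)}\}$ is $\qp_x$-stable. A differential polynomial in $u$ with vanishing $\qp_x$ is a constant: the kernel of $\qp_x$ on $C^\infty(u)[[u^{(s)}]]$ consists of constants, by the standard argument of looking at the highest $u^{(s)}$ appearing. Hence $h|_{w=0}=c$ and $\diff{}{\qt_0}c=0$.

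The main obstacle, if any, is justifying that substitution $w^{(s)}=\qs^s=0$ commutes with the operations used: $\diff{}{\qt_0}$, $\qp_x$, and partial derivatives in $u^{(s)}$. The ideal generated by $w^{(s)},\qs^s$ is preserved by $\qp_x$. Under $\diff{}{\qt_0}$ it is preserved as well, since $w\mapsto\qs^1$ and $\qs\mapsto 0$. I would state this explicitly, and then the lemma follows. An alternative route, as a consistency check, is to compute $\vard{H_{k+1}}{u}$ at $w=0$ directly from $\res L^{k+2}$; since $L|_{w=0}=\qp_x$, the first-order variation in $u$ appears only through $w$-linear terms, giving a constant ($0$).
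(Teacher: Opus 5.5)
Your proposal is correct and follows the paper's argument. You combine \eqref{BS} with $\diff{w}{t_k}=\qp_x\vard{H_{k+1}}{u}$ to control $\vard{H_{k+1}}{u}\big|_{w(x)=0}$, then observe that after setting $\qs(x)=0$ only $\sum_s\qth^{s+1}\partial_{u^{(s)}}$ acts on it. Your version is slightly more careful than the paper's: the paper asserts this restricted variational derivative vanishes outright, whereas \eqref{BS} only shows it is a constant, and as you note a constant is enough.
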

\begin{proof}
    It follows from the identity \eqref{BS} that 
    \[
  \left.\vard{H_{k+1}}{u}\right|_{w(x)=0}= 0,
    \]
    Then by definition, we have 
    \[
\left.\diff{\qs}{t_k}\right|_{w(x)=\qs(x)=0} = \sum_{s\geq 0}\qth^{s+1}\diff{}{u^{(s)}}\left.\vard{H_{k+1}}{u}\right|_{w(x)=0} = 0,
    \]
    which verifies the assertion.
\end{proof}

To conclude, it follows that by setting $w(x) = \qs(x) = 0$, we have a well-defined reduction of the super extended 1-constrained KP hierarchy given by 
\begin{align*}
    &\diff{u}{\bar t_k} = \qp_x W_k,\quad \diff{\qth}{\bar t_k} = \diff{W_k}{\bar \qt_0},\\
        &\diff{u}{\bar\qt_0} = \qth^1,\quad \diff{\qth}{\bar\qt_0} = 0,\\
&\diff{u}{\bar \qt_1} = -u\qth^1-u_x\qth-\qth^2,\quad \diff{\qth}{\bar \qt_1} = -\qth\qth^1,
\end{align*}
where we have 
\[
W_k = \left.\vard{H_{k+1}}{w}\right|_{w(x)=0},\quad \qp_x W_k =  \sum_{s=1}^k\binom{k}{s-1}\kk{\qp_x^{k+1-s}(ug_s)-u\qp_x^{k+1-s}g_s}.
\]
We call this a generalized bi-Hamiltonian integrable hierarchy, since after the reduction, the original bi-Hamiltonian structure of the 1-constrained KP hierarchy reduces to a generalized bi-Hamiltonian structure whose leading term is \emph{not} of $D$-type.

In what follows, let us show that the reduced integrable hierarchy 
\begin{equation}
    \label{BV}
    \diff{u}{\bar t_k} = \sum_{s=1}^k\binom{k}{s-1}\kk{\qp_x^{k+1-s}(ug_s)-u\qp_x^{k+1-s}g_s},\quad k\geq 1\end{equation}
is precisely the Burgers hierarchy
\begin{equation}
    \label{BW}
 \diff{u}{\bar t_k} = \qp_x(\qp_x+u)^{k-1}u,\quad k\geq 1.
\end{equation}
This may be proved by a straightforward computation. However, let us take a detour and prove the above identity using the theory of generalized bi-Hamiltonian structures.
\begin{Prop}
    The Burgers hierarchy admits a super extension given by 
    \begin{align*}
                &\diff{u}{s_k} = \qp_x(\qp_x+u)^{k-1}u,\quad \diff{\qth}{s_k} = \diff{}{\qt_0}(\qp_x+u)^{k-1}u,\\
        &\diff{u}{\qt_0} = \qth^1,\quad \diff{\qth}{\qt_0} = 0;\\
&\diff{u}{\qt_1} = -u\qth^1-u_x\qth-\qth^2,\quad \diff{\qth}{\qt_1} = -\qth\qth^1,\\
    \end{align*}
\end{Prop}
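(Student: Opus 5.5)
We need to show that all the listed flows mutually commute: $[\qp/\qp s_k,\qp/\qp s_l]=0$, $[\qp/\qp\qt_i,\qp/\qp\qt_j]=0$ and $[\qp/\qp\qt_i,\qp/\qp s_k]=0$. The plan is to avoid brute force by using the uniqueness result Theorem \ref{AH} together with the commuting of $\qp_x$.

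\textbf{Step 1: the odd flows.} First we check $[\qp/\qp\qt_i,\qp/\qp\qt_j]=0$ for $i,j=0,1$. This is a finite computation: both vector fields commute with $\qp_x$, so it suffices to evaluate the brackets on $u$ and $\qth$.
\begin{itemize}
\item $\qp/\qp\qt_0$ is the Darboux form, so $[\qp/\qp\qt_0,\qp/\qp\qt_0]=0$ is immediate.
\item For $\qp/\qp\qt_1$ we compute $\frac{\qp}{\qp\qt_1}(-u\qth^1-u_x\qth-\qth^2)$ and $\frac{\qp}{\qp\qt_1}(-\qth\qth^1)$. The first vanishes after the cancellations $\qth\qth=0$, and the second is $-(\qth\qth^1)\qth^1+\qth\,\qp_x(\qth\qth^1)=0$.
\item The mixed bracket reduces to checking that $\frac{\qp}{\qp\qt_0}(-u\qth^1-u_x\qth-\qth^2)+\frac{\qp}{\qp\qt_1}\qth^1=0$ and that $\frac{\qp}{\qp\qt_0}(\qth\qth^1)=0$, both of which are direct.
\end{itemize}
The resulting leading term has $f=0$ and $g$ with $1+ug=-1$ after normalising signs. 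Alternatively, since this pair is the restriction of $(D_{P_0},D_{P_1})$ to $w=\qs=0$ and the reduction is consistent, commutativity is inherited from the 1-constrained KP hierarchy. That shortcut needs the reduction lemmas extended to the $\qt_i$ flows, so I would use the direct check.

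\textbf{Step 2: the even flows commute with $\qp/\qp\qt_0$.} The flows $\qp/\qp s_k$ are written as $\qp_x W$ with $\qth$-component $\frac{\qp W}{\qp\qt_0}$, where $W=(\qp_x+u)^{k-1}u$. This is exactly the form $-[\qp/\qp\qt_0,\,\cdot\,]$ of a Hamiltonian-type flow in Darboux coordinates. Indeed, with $P_0=\frac12\int\qth\qth^1$ and $H=\int h$ with $\vard{H}{u}=W$, one gets $D_{[P_0,H]}$. This requires $W$ to be a variational derivative. For the Burgers hierarchy that is false, so instead we verify commutation directly:
\[
\fk{\diff{}{\qt_0}}{\diff{}{s_k}}u=\diff{}{\qt_0}\qp_xW-\qp_x\diff{W}{\qt_0}=0,
\]
using $[\qp_x,\qp/\qp\qt_0]=0$. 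The same identity applied to $\qth$ gives $\frac{\qp}{\qp\qt_0}\frac{\qp W}{\qp\qt_0}=0$, because $\qp/\qp\qt_0$ squares to zero. So commutation with $\qp/\qp\qt_0$ is automatic.

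\textbf{Step 3: the even flows commute with $\qp/\qp\qt_1$, which is the main obstacle.} Here I would use the recursion structure. Write $Y_k=\qp/\qp s_k$ and define $Z=[\qp/\qp\qt_1,Y_k]\in\derx^1$. By the Jacobi identity and Steps 1–2, $Z$ commutes with both $\qp/\qp\qt_0$ and $\qp/\qp\qt_1$, so it is a bi-Hamiltonian cocycle. We then induct on $k$ using the recursion $W_{k+1}=(\qp_x+u)W_k$. The goal is the identity
\[
\fk{\diff{}{\qt_1}}{Y_k}\ \text{relates to}\ \fk{\diff{}{\qt_0}}{\tilde Y_{k+1}}
\]
for a suitable odd-degree-$(-1)$ flow $\tilde Y_{k+1}$ with $\tilde Y(\qth)=\pm W_{k+1}$. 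This is a bi-Hamiltonian recursion analogous to $\mathcal P_1\delta H_{k-1}=\mathcal P_0\delta H_k$. I would verify it on $u$ and $\qth$ using $\frac{\qp}{\qp\qt_1}u=-(\qp_x+u)\qth^1-u_x\qth$ together with the Leibniz rule. Granting this, $Y_k=\pm[\qp/\qp\qt_0,\tilde Y_k]$, and then
\[
[\qp/\qp\qt_1,Y_k]=\pm[\qp/\qp\qt_0,[\qp/\qp\qt_1,\tilde Y_k]]=\pm[\qp/\qp\qt_0,[\qp/\qp\qt_0,\tilde Y_{k+1}]]=0,
\]
since $\qp/\qp\qt_0$ squares to zero.

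\textbf{Step 4: the even flows commute with each other.} $[Y_k,Y_l]$ commutes with both $\qp/\qp\qt_i$, lies in $\derx^0$, and has hydrodynamic part equal to the bracket of $u^k u_x$-type flows, which vanishes. We need $[Y_k,Y_l]\in\derx^0_{\ge2}$, or at least with zero degree-1 part. A degree-1 part is a cocycle of the form $c(u)u_x$ which must commute with both $\qp/\qp\qt_i$; checking this forces it to vanish, or else one compares directly. Theorem \ref{AH} then gives $[Y_k,Y_l]=0$.
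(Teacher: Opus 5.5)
Your strategy is sound and differs from the paper mainly in packaging. As written, though, Step 3 stops exactly where the content of the proposition lies.

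With $\tilde Y_k\in\derx^{-1}$ given by $\tilde Y_k(u)=0$, $\tilde Y_k(\qth)=W_k$, one has $Y_k=\fk{\diff{}{\qt_0}}{\tilde Y_k}$ with a plus sign. The identity your chain actually uses is $\fk{\diff{}{\qt_1}}{\tilde Y_k}=-Y_{k+1}$; your first display has $Y_k$ where $\tilde Y_k$ is meant. On $u$ this identity is automatic, since both sides equal $-\qp_x(\qp_x+u)W_k$. On $\qth$ the difference of the two sides is $E(W_k)$, where
\[
E(W):=\diff{W}{\qt_1}+\qth\,\qp_x W+(\qp_x+u)\diff{W}{\qt_0}.
\]
Since $W_k=(\qp_x+u)^k1=Z_k$, the equation $E(W_k)=0$ is precisely the identity to which the paper reduces $\fk{\diff{}{s_k}}{\diff{}{\qt_1}}=0$. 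Your Jacobi argument $\fk{\diff{}{\qt_1}}{Y_k}=-\fk{\diff{}{\qt_0}}{\fk{\diff{}{\qt_1}}{\tilde Y_k}}=\fk{\diff{}{\qt_0}}{Y_{k+1}}=0$ is correct. However, it saves no work: it relocates the same identity, which you then simply grant.

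The induction on $k$ you suggest does close this gap, and more economically than the paper does. For every even $W$, the Leibniz rule, $[\qp_x,\diff{}{\qt_i}]=0$ and $\diff{u}{\qt_0}=\qth^1$ give
\[
E\big((\qp_x+u)W\big)=(\qp_x+u)E(W)+E(u)\,W .
\]
Moreover $E(u)=\diff{u}{\qt_1}+u_x\qth+\qth^2+u\qth^1=0$ by the formula for $\diff{u}{\qt_1}$. Hence $E(W_k)=(\qp_x+u)^kE(1)=0$.

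The paper instead derives closed commutation formulas of $\diff{}{\qt_0}$, $\diff{}{\qt_1}$, $\qp_x$ and $\qth$ with $(\qp_x+u)^k$, and checks the identity by a binomial resummation. Your intertwining relation replaces all of that with a one-step computation. Your framing also makes visible the flow-level recursion $\fk{\diff{}{\qt_1}}{\tilde Y_k}=-\fk{\diff{}{\qt_0}}{\tilde Y_{k+1}}$ that underlies the paper's remark on $\mathcal R$.

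Two cosmetic points:
\begin{enumerate}
\item $\fk{Y_k}{Y_l}$ lies in $\derx^0_{\geq 2}$ automatically, because differential degrees add under the bracket. So Step 4 needs no discussion of a degree-one part, and Theorem \ref{AH} applies directly, as in the paper.
\item After replacing $\qt_1$ by $-\qt_1$, the leading term has $f=g=0$, so $\bar K=1$ (viscous type), not $1+ug=-1$.
\end{enumerate}
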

\begin{proof}
    It is obvious that the two odd flows commute, and it follows from the definition that  
    \[
    \fk{\diff{}{s_k}}{\diff{}{\qt_0}} = 0.
    \]
    It remains to verify that
    \begin{equation}
        \label{AAG}
    \fk{\diff{}{s_k}}{\diff{}{\qt_1}} = 0.
    \end{equation}
    First we check that 
    \[
    \diff{}{\qt_0}\comp(\qp_x+u)^k = (\qp_x+u)^k\comp\diff{}{\qt_0}+\sum_{n=1}^k (-1)^{n-1}\binom{k}{n}(\qp_x+u)^{k-n}\comp\qth^n,
    \]
    which can be proved straightforwardly using induction. Indeed, we have 
    \begin{align*}
        \diff{}{\qt_0}\comp(\qp_x+u)^{k+1}=\,& (\qp_x+u)^k\comp\diff{}{\qt_0}\comp(\qp_x+u)\\&+\sum_{n=1}^k (-1)^{n-1}\binom{k}{n}(\qp_x+u)^{k-n}\comp\qth^n\comp(\qp_x+u)\\
        =\,&(\qp_x+u)^k\comp\kk{(\qp_x+u)\diff{}{\qt_0}+\qth^1}\\
        &+\sum_{n=1}^k (-1)^{n-1}\binom{k}{n}(\qp_x+u)^{k-n}\comp\kk{(\qp_x+u)\comp\qth^n-\qth^{n+1}}\\
        =\,&(\qp_x+u)^{k+1}\comp\diff{}{\qt_0}+\sum_{n=1}^{k+1} (-1)^{n-1}\binom{k+1}{n}(\qp_x+u)^{k+1-n}\comp\qth^n.
    \end{align*}
    Using a similar proof as above, it is easy to see that 
    \begin{align*}
    \diff{}{\qt_1}\comp(\qp_x+u)^k =\,& (\qp_x+u)^k\comp\diff{}{\qt_1}+\sum_{n=1}^k (-1)^{n-1}\binom{k}{n}(\qp_x+u)^{k-n}\comp\kk{\qp_x^n(-u\qth-\qth^1)},\\
    \qp_x\comp(\qp_x+u)^k =\,& (\qp_x+u)^k\comp\qp_x+\sum_{n=1}^k (-1)^{n-1}\binom{k}{n}(\qp_x+u)^{k-n}\comp u^{(n)},\\
    \qth\comp(\qp_x+u)^k =\,&\sum_{n=0}^k (-1)^n\binom{k}{n}(\qp_x+u)^{k-n}\comp\qth^n.
    \end{align*}

    Denote by $Z_k = (\qp+u)^k1$, then it is easy to see that the desired identity \eqref{AAG} is equivalent to 
    \[
    -\diff{Z_k}{\qt_1} = \qth\qp_xZ_k+(u+\qp_x)\diff{Z_k}{\qt_0}.
    \] 
    Hence, it follows that
    \begin{align*}
        -\diff{Z_k}{\qt_1} =\,& \sum_{n=1}^k (-1)^{n-1}\binom{k}{n}(\qp_x+u)^{k-n}\qp_x^n(u\qth+\qth^1),\\
        (u+\qp_x)\diff{Z_k}{\qt_0}=\,&\sum_{n=1}^k (-1)^{n-1}\binom{k}{n}(\qp_x+u)^{1+k-n}\qth^n\\
        =\,&\sum_{n=1}^k (-1)^{n-1}\binom{k}{n}(\qp_x+u)^{k-n}(\qth^{n+1}+u\qth^n),\\
        \qp_x Z_k=\,&\sum_{n=1}^k (-1)^{n-1}\binom{k}{n}(\qp_x+u)^{k-n} u^{(n)}.
    \end{align*}
    We then need to verify the identity
    \[
    \qth\qp_xZ_k = \sum_{n=1}^k (-1)^{n-1}\binom{k}{n}(\qp_x+u)^{k-n} \kk{\qp_x^n(u\qth)-u\qth^n}.
    \] 
    By a direct computation, we arrive at 
    \begin{align*}
\qth\qp_xZ_k =\,&\qth\sum_{n=1}^k (-1)^{n-1}\binom{k}{n}(\qp_x+u)^{k-n}u^{(n)}\\
=\,&\sum_{n=1}^k\sum_{j=0}^{k-n} (-1)^{j+n-1}\binom{k}{n}\binom{k-n}{j}(\qp_x+u)^{k-n-j}\kk{\qth^j u^{(n)}}\\
=\,&\sum_{p=1}^k\sum_{q=1}^p(-1)^{p-1}\binom{k}{q}\binom{k-q}{p-q}(\qp_x+u)^{k-p}\kk{\qth^{p-q} u^{(q)}}\\
=\,&\sum_{p=1}^k\sum_{q=1}^p(-1)^{p-1}\binom{k}{p}\binom{p}{q}(\qp_x+u)^{k-p}\kk{\qth^{p-q} u^{(q)}}\\
=\,&\sum_{p=1}^k(-1)^{p-1}\binom{k}{p}(\qp_x+u)^{k-p}\kk{\qp_x^{p}(u\qth)-u\qth^p},
    \end{align*}
here for the second equality, we perform a re-summation using the indices $p:=n+j$ and $q:=n$. Therefore, we show the validity of \eqref{AAG}. 

Finally, it follows from Theorem \ref{AH} that the even flows mutually commute with each other, and this concludes the proof of the proposition.
\end{proof}
\begin{Th}
    The reduced hierarchy of the 1-constrained KP hierarchy by setting $w(x) = 0$ gives the Burgers hierarchy.
    \end{Th}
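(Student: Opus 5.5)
We want to show that the reduced flows \eqref{BV} coincide with the Burgers flows \eqref{BW}. The argument uses uniqueness of symmetries (Theorem \ref{AH}) rather than direct expansion of $g_s$.

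First, both families commute with the same generalized bi-Hamiltonian structure. The reduced hierarchy $\diff{}{\bar t_k}$ commutes with $\diff{}{\bar\qt_0}$ and $\diff{}{\bar\qt_1}$, because the reduction $w(x)=\qs(x)=0$ is compatible with all flows of the super extended 1-constrained KP hierarchy. That compatibility is exactly what the two preceding lemmas establish. The bracket relations in the full hierarchy then restrict to the reduced ring $C^\infty(u)[[u^{(s+1)},\qth^s]]$. In particular, the odd part $\diff{\qth}{\bar t_k}=\diff{W_k}{\bar\qt_0}$ is determined by $W_k$. The reduced odd flows $\diff{}{\bar\qt_i}$ are literally the flows $\diff{}{\qt_i}$ of the preceding Proposition. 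That Proposition shows the Burgers flows $\diff{}{s_k}$ also commute with both $\diff{}{\qt_0}$ and $\diff{}{\qt_1}$.

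Next, the leading structure $(\diff{}{\qt_0},\diff{}{\qt_1})$ is a deformation of a scalar semisimple structure, so Theorem \ref{AH} applies. We normalize to the form \eqref{AZ},\eqref{BA} by flipping the sign of $\qt_1$; this normalization does not affect commutation. The theorem says an even flow commuting with both odd flows is determined by its differential-degree-$\le 1$ part. Concretely, the difference $Y_k=\diff{}{\bar t_k}-\diff{}{s_k}$ commutes with both odd flows. If its components of differential degree $0$ and $1$ vanish, then $Y_k\in\derx^0_{\geq 2}$, and Theorem \ref{AH} gives $Y_k=0$.

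It remains to compare the low-degree parts, which is the main computational point. Neither family is homogeneous in differential degree, so both must be organized by $\deg_{\qp_x}$, the total number of $x$-derivatives. Both right-hand sides are $\qp_x$ of something. On the Burgers side, $\qp_x(\qp_x+u)^{k-1}u$ has degree-1 part $\qp_x(u^k)=ku^{k-1}u_x$, and all other terms carry at least two derivatives. On the reduced side, we extract the part of \eqref{BV} with a single derivative. Since $g_s=u^{s-1}+(\text{terms with derivatives})$, we get $\qp_x^{k+1-s}(ug_s)-u\qp_x^{k+1-s}g_s$ up to higher-degree terms. For $s=k$ this equals $\qp_x(u^k)-u\qp_x(u^{k-1})=u^{k-1}u_x$ at leading order. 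For $s<k$ there are at least two derivatives, so only the $s=k$ term contributes. That term carries the coefficient $\binom{k}{k-1}=k$, giving $k u^{k-1}u_x$, which matches.

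There is no degree-$0$ part on either side, and the $\qth$-components are determined by the $u$-components via $\diff{}{\qt_0}$, so they match as well. If the degree-1 $\qth$-component needs separate checking, it follows from applying $\diff{}{\qt_0}$ to the matched potentials $W_k$ and $(\qp_x+u)^{k-1}u$. Their degree-0 parts are $u^k$, possibly up to an additive constant that $\diff{}{\qt_0}$ kills. The obstacle I expect is the bookkeeping here: the flows are not graded. If the problem is recast to grade by $\qe$, then Theorem \ref{AH} must be applied to $Y_k$ in the form: an even flow commuting with both odd flows that has no hydrodynamic part vanishes. Its proof, via $BH^0_d=0$ for $d\ge2$ applied to the lowest-degree component, works filtration-wise, so I would invoke it that way. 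This completes the identification $\diff{}{\bar t_k}=\diff{}{s_k}$.
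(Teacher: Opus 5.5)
Your proposal is correct and follows the paper's proof. Both the reduced flows \eqref{BV} and the Burgers flows \eqref{BW} commute with the same viscous-type generalized bi-Hamiltonian structure, so Theorem \ref{AH} forces their difference to vanish once its components of differential degree $\le 1$ agree. Your explicit checks that only the $s=k$ term of \eqref{BV} contributes $ku^{k-1}u_x$, that the $\qth$-components then match, and that the sign of $\qt_1$ can be normalized just spell out the \emph{same hydrodynamic leading term} assertion that the paper leaves to the reader.
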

    \begin{proof}
        We have seen that both the flows \eqref{BV} and \eqref{BW}
        commute with the generalized bi-Hamiltonian structure 
        \begin{align*}
                    &\diff{u}{\qt_0} = \qth^1,\quad \diff{\qth}{\qt_0} = 0,\\
&\diff{u}{\qt_1} = -u\qth^1-u_x\qth-\qth^2,\quad \diff{\qth}{\qt_1} = -\qth\qth^1.
        \end{align*}
        It follows from Theorem \ref{AH} that the flows \eqref{BV} and \eqref{BW} coincide since they share a same hydrodynamic leading term. 
    \end{proof}

    It's worth noting that the Burgers hierarchy is an integrable hierarchy of double ramification type, and it may control some version of F-cohomological field theory on the space of Riemann surfaces with boundaries \cite{arsie2021flat}.
    \begin{Rem}
        The formulation of Burgers hierarchy as a generalized bi-Hamiltonian structure is compatible with the recursion structure. Define the differential operator 
        \[
        \mathcal D_0 = \qp_x,\quad \mathcal D_1 = u\qp_x+u_x+\qp_x^2
        \] 
        which is characterized by 
        \[
        \diff{u}{\qt_0} = \mathcal D_0(\qth),\quad \diff{u}{\qt_1} = -\mathcal D_1(\qth).
        \]
        These operators do not form a bi-Hamiltonian structure, but it still makes sense to consider the corresponding recursion operator $\mathcal R$, which is a formal pseudo-differential operator defined by 
        \[
        \mathcal R = \mathcal D_1\comp \mathcal D_0^{-1} = \qp_x+u+u_x\qp_x^{-1}.
        \]
        Then it follows that 
        \[
        \mathcal R \diff{u}{t_k} = (u\qp_x+u_x+\qp_x^2)(\qp_x+u)^{k-1}u = \qp_x(\qp_x+u)^k u = \diff{u}{t_{k+1}}.
        \]
        This indicates that it might be possible to extend the construction of super tau-cover \cite{liu2020super,liu2022variational} to the case of generalized bi-Hamiltonian systems.

    \end{Rem}

\subsection{Integrable conservation laws}

Scalar generalized bi-Hamiltonian structurew can be used to study integrable conservation laws. Recall that an integrable conservation law is an evolutionary PDE of the form 
\[
\diff{u}{t} = \qp_x\big(u^2+\sum_{k\geq 1}A_k\big),\quad A_k\in\hm A^0_k,
\]
such that it is formally integrable, meaning that for any function $f(u)$, there exists a symmetry of the above PDE of the form 
\[
\diff{u}{t_f} = \qp_x\big(f(u)+\sum_{k\geq 1}B_k\big),\quad B_k\in\hm A^0_k.
\]
When the term $A_1\neq 0$, the corresponding conservation law is called viscous \cite{arsie2015integrable}. In \cite{arsie2015integrable}, the classification of integrable viscous conservation laws are studied, and they conjectured that the Miura-equivalence class of such systems are parametrized exactly by the term $A_1$. If we write $A_1 = 2a(u)u_x$, then the function $a(u)$ is called the viscous central invariant of a viscous integrable conservation law. Using the theory of generalized bi-Hamiltonian structure, we can partially verify this conjecture.
\begin{Th}[=Theorem \ref{AK}]
    There exists an integrable viscous conservation law for any non-zero viscous central invariant $a(u)$.
\end{Th}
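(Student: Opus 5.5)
Given a non-zero function $a(u)$, I will build a deformed generalized bi-Hamiltonian structure of viscous type whose viscous central invariant corresponds to $a(u)$. I will then produce the conservation law and its symmetries as flows commuting with this deformed pair. Take the viscous leading pair from Proposition \ref{BP} with $f=0$, so that $\bar K=1$:
\[
\diff{u}{\qt_0}=\qth^1,\quad \diff{\qth}{\qt_0}=0,\quad \diff{u}{\qt_1}=u\qth^1+u_x\qth,\quad \diff{\qth}{\qt_1}=\qth\qth^1 .
\]
By Proposition \ref{BP}, the infinitesimal deformation $X=[\diff{}{\qt_1},[\diff{}{\qt_0},A]]$ with $A(\qth)=c\,a(u)\log u_x$ lies in $BH^1_2$. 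Here $c$ is a normalisation constant, to be fixed by comparison with the hydrodynamic term. I will extend $\diff{}{\qt_1}+\qe X$ to a full deformation $(\tilde P_0,\tilde P_1)$ with $\tilde P_0=\diff{}{\qt_0}$. By Corollary \ref{BX} we may keep $P_0$ undeformed. The obstructions live in $BH^2_{\geq 4}$, and these vanish by Theorem \ref{BO}, since the nonzero groups occur only at $(2,2),(2,3)$. The quadratic obstruction $[X,X]$ sits in differential degree $4$, so every obstruction class is zero and the deformation extends order by order.

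Next I construct the symmetries. For each $f(u)$ I look for an even flow $Y_f=Y_f^{[0]}+\sum_{k\ge1}\qe^kY_f^{[k]}$ commuting with both $\tilde P_0$ and $\tilde P_1$. The leading term $Y_f^{[0]}$ is the hydrodynamic flow $u_t=\qp_x f(u)$, suitably lifted to $\qth$ (for instance $\qth_t=\diff{f(u)}{\qt_0}$). Using the generalized Hamiltonian structure of hydrodynamic type, this lift commutes with the leading pair whenever $f$ is a genuine symmetry of the flat connection pair; in the scalar case every $f$ works. The corrections are then solved degree by degree. At each step the error term $E_k$ satisfies the cocycle conditions in $BH^1_{k+1}$ relative to the bicomplex of Lemma \ref{BZ}. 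I must show that it is exact, namely $E_k\in\ima \mathrm{ad}_{\tilde P_1}\comp\mathrm{ad}_{\tilde P_0}$ up to the relevant shift. This reduces to the vanishing of $BH^1_d$ for $d\geq3$, which Theorem \ref{BO} gives. Uniqueness is supplied by Theorem \ref{AH}, and Theorem \ref{AH} also shows that all $Y_f$ commute with one another.

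Finally, I check the conservation-law form and the identification of $a(u)$. The flow $Y_f$ acts on $u$ as a total derivative, because it commutes with $\tilde P_0=\diff{}{\qt_0}$. Concretely, $[\diff{}{\qt_0},Y_f]=0$ applied to $u$ gives $\qp_x Y_f(\qth)=\diff{Y_f(u)}{\qt_0}$. Since the cohomology of $\diff{}{\qt_0}$ is trivial by Theorem \ref{BM}, this forces $Y_f(u)=\qp_x(\cdots)$ in positive degrees. Taking $f=u^2$ yields $u_t=\qp_x(u^2+\qe\,2\tilde a(u)u_x+\cdots)$. I will compute the degree-$2$ term from $X$ explicitly and show that $\tilde a$ is a fixed nonzero multiple of $a$, which determines $c$.

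I expect the main obstacle to be the degree-$2$ bookkeeping. The cohomology groups $BH^1_2$ and $BH^2_2,BH^2_3$ are nonzero, so the first correction $Y^{[1]}$ cannot be obtained from vanishing alone. It has to be written down explicitly from $A$, presumably as $Y^{[1]}=[Y^{[0]},A]$-type quasi-triviality, and then verified to be polynomial. The same explicit computation produces the relation between $\tilde a$ and $a$, in particular $\tilde a\neq0$ exactly when $a\neq0$.
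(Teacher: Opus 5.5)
Your proposal is correct and follows the paper's proof essentially step by step. You deform only $\diff{}{\qt_1}$ by the class of Proposition~\ref{BP}, keeping $\diff{}{\qt_0}$ fixed via Corollary~\ref{BX}, extend using $BH^2_{\geq 4}=0$, build the symmetries $\diff{}{t_f}$ order by order from $BH^1_{\geq 3}=0$, get uniqueness and commutativity from Theorem~\ref{AH}, and read off the $u_x$-coefficient of the $u^2$-flow by direct computation, exactly as the paper does.

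The one thing to correct is that the obstacle you anticipate at $Y^{[1]}$ does not exist, because of an indexing slip. The equation for $Y^{[1]}\in\derx^0_2$ is $[\diff{}{\qt_1},Y^{[1]}]=-[X,Y^{[0]}]\in\derx^1_3$, so your $BH^1_{k+1}$ should read $BH^1_{k+2}$. Hence $BH^1_3=0$ already gives $Y^{[1]}=[\diff{}{\qt_0},Z]$ with $Z(\qth)=b(u)u_x$, and no quasi-triviality argument is needed. The explicit check then yields $b=2a$, so $c=1$ and $\tilde a=a$.
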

\begin{proof}
    Let us consider the following generalized bi-Hamiltonian structure of viscous type:
\begin{align*}
\diff{u}{\qt_0^{[0]}} &= \qth^1,\quad \diff{\qth}{\qt_0^{[0]}} = 0,\\
\diff{u}{\qt_1^{[0]}} &= u\qth^1+u_x\qth,\quad \diff{\qth}{\qt_1^{[0]}} = \qth\qth^1.
\end{align*}
It follows from Theorem \ref{AG} and Theorem \ref{BP} that for any function $a(u)$, we can construct a deformation of $(\diff{}{\qt_0^{[0]}},\diff{}{\qt_1^{[0]}})$ given by 
\begin{align*}
\diff{u}{\qt_0} &= \qth^1,\quad \diff{\qth}{\qt_0} = 0,\\
\diff{u}{\qt_1} &= u\qth^1+u_x\qth+2a(u)\qth^2+2a'(u)u_x\qth^1+\hm A^1_{\geq 3},\quad \diff{\qth}{\qt_1} = \qth\qth^1+\hm A^2_{\geq 3}.
\end{align*}
Note that we have assumed that $\diff{}{\qt_0^{[0]}}=\diff{}{\qt_0}$ by using Corollary \ref{BX}. Using the fact that cohomology groups 
\[
BH^1_{\geq 3}\kk{\derx,\diff{}{\qt_0^{[0]}},\diff{}{\qt_1^{[0]}}} = 0,
\]
it is not hard to prove that for any smooth function $f(u)$, there exists a flow $\diff{}{t_f}\in\derx^0$ such that 
\[
\fk{\diff{}{\qt_0}}{\diff{}{t_f}} = \fk{\diff{}{\qt_1}}{\diff{}{t_f}} = 0
\]
with the form 
\[
\diff{u}{t_f} = \qp_x f(u)+\hm A^0_{\geq 2}.
\]
Furthermore, we have
\[
\fk{\diff{}{t_f}}{\diff{}{t_g}} = 0,\quad \forall\ f,g\in C^\infty(u).
\]

It is easy to observe that each flow $\diff{}{t_f}$ is actually of the form 
\[\diff{u}{t_f} = \qp_x\big(f(u)+\sum_{k\geq 1}B_k\big),\quad B_k\in\hm A^0_k,\]
and an explicit calculation implies that 
\[
\diff{u}{t_{u^2}} =  \qp_x\left(u^2+2a(u)u_x+\hm A^0_{\geq 2}\right).
\]
The theorem is proved.
\end{proof}

\subsection{An example of exceptional generalized bi-Hamiltonian structure}
\label{AE}
In Sect.\,\ref{BY}, we have studied in detail the deformation of scalar semisimple generalized bi-Hamiltonian structure except for the exceptional case. In this section, we show, by an explicit computation, that deformations of generalized bi-Hamiltonian structures of exceptional type can be complicated.

Let us consider the following generalized bi-Hamiltonian structure:
\begin{align*}
\diff{u}{\qt_0} &= \qth^1,\quad \diff{\qth}{\qt_0} = 0,\\
\diff{u}{\qt_1} &= u\qth^1,\quad \diff{\qth}{\qt_1} = 0,
\end{align*}
with the function $\bar K(u) = 0$. In what follows, we will compute the space of infinitesimal deformations of the above generalized bi-Hamiltonian structures.
\begin{Th}
    \label{CN}
    \[
    H^1_d\left(\derx,\diff{}{\qt_0},\diff{}{\qt_1}\right) \cong \begin{cases}
			0, & \text{if $d = 2,3,4$ or $d = 2n$ for $n\geq 3$,}\\
            C^\infty(u), & \text{otherwise.}
		 \end{cases}
    \]
\end{Th}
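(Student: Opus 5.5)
We will compute the group, which we read as $BH^1_d$, by running the same machinery as in the proof of Theorem \ref{BO}, now with $f=g=0$ and $\bar K(u)=0$.

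\textbf{Reduction to $E_1$.} Lemma \ref{BZ} applies, since Theorem \ref{BM} gives $H^p_{>0}(\derx,\diff{}{\qt_0})=0$. It replaces the bicomplex by $(\hm B,D)$ with $K(u,\ql)=1$. We then use the length filtration and its spectral sequence $E$. By the mapping-cone description, for $d\geq 2$ the relevant part of $E_1$ is
\[
\hat d(\hm A^{p-1}_{d-1})\oplus\hat d(\hm A^{p}_{d-1}),
\]
with $\ql$ identified with $u$.

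\textbf{Second spectral sequence $E'$.} We filter $E_1$ by $\deg_{\qth^1}-\deg_\qth$. The differential $\Qd'$ is $\qth^1\mathcal E-\qth^1$ on the first summand and $\qth^1\mathcal E$ on the second. With $\bar K=0$ we have
\[
\mathcal E=\sum_{s\geq1}u^{(s)}\diff{}{u^{(s)}},
\]
which simply counts the number $n$ of $u$-derivatives in a monomial.

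Running the computation of Propositions \ref{BJ} and \ref{BK}, the weights become $w(\fm)=n-1$ and $w(\fm)=n-2$ for the monomials
\[
\fm=u^{(s_1)}\cdots u^{(s_n)}\qth^{k_1+1}\cdots\qth^{k_m+1}.
\]
So the $E'_1$ page is spanned by $\hm C\,\hat d(\fm)$ with exactly one $u$-derivative factor in the second summand, and exactly two in the first. Unlike the viscous case, these classes now exist in every differential degree, because the $\qth^{k+1}$ factors are arbitrary. For each fixed $d$ and $p=1$, we would write down the finitely many such monomials explicitly, for instance $\hat d(u^{(s)}\qth^{k+1}\cdots)$ in the correct super degree.

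\textbf{Higher differentials.} Next we compute the remaining components of $\Qd_1$ on $E'_1$: those preserving or lowering $\deg_{\qth^1}$, coming from $\qd-\qth\bar\qp_x$ and the off-diagonal term $-(u-\ql)u_x\diff{}{u}-u_x$. These induce the higher differentials of $E'$, and then those of $E$.

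The expectation is that almost everything cancels in pairs. What survives in super degree $1$ should be a single function-valued class in each allowed degree, namely one function $\varphi(u)$ times a fixed combination of weight-zero monomials.

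The parity pattern should arise as follows. The differential between the $E'_1$ generators in differential degree $d$ involves a coefficient like $1-(-1)^{d}$, or a binomial sum such as $\sum_j(-1)^j\binom{d}{j}$, coming from the Leibniz expansion of $\qp_x^s(u_x\qth)$ in $\qd$. For even $d\geq 6$ this coefficient is nonzero, so the map is an isomorphism and the group vanishes. For odd $d\geq 5$ it is zero, so a kernel isomorphic to $C^\infty(u)$ survives.

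The small degrees $d=2,3,4$ need separate treatment, because few monomials exist there.

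\textbf{Main obstacle.} The hardest step is the explicit bookkeeping of the higher differentials on the weight-zero subspace and the precise combinatorial coefficient that governs the parity. To keep this tractable, we would work directly with cocycles in low degrees. We would first verify $d=2,3,4,5,6$ by hand, solving $[\diff{}{\qt_0},X]=[\diff{}{\qt_1},X]=0$ as in the proof of Proposition \ref{BP}. Theorem \ref{BM} lets us write $X=[\diff{}{\qt_0},Y_0]=[\diff{}{\qt_1},Y_1]$ and compare the two expressions.

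For general $d$ we would then guess the surviving representative, for example built from
\[
Y_0(\qth)=\varphi(u)\,\qth^{d-1},
\]
and check directly that the obstruction coefficient is a multiple of $1+(-1)^{d-1}$. Both the construction of the class for odd $d\geq5$ and the vanishing for even $d$ would then follow. Finally, we would compare dimensions with the $E'$ count to confirm that $C^\infty(u)$ is the whole group.
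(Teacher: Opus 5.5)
The gap is the step you call ``Higher differentials''. It carries the whole content of the theorem, and you only state an expectation there. With $\bar K=0$ the page $E'_1$ is not small. By your own weight count, every sector $\hm C\,\hat d(\fm)$ with exactly two factors $u^{(s)}$ (first summand), resp.\ exactly one (second summand), survives, with arbitrarily many factors $\qth^{k+1}$. So its rank over $C^\infty(u)$ in super degree $1$ grows with $d$, and you would have to compute the higher differentials of $E'$ and then of $E$ to see at most one copy of $C^\infty(u)$ survive.

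The parity mechanism you suggest cannot be right as stated. The full alternating sum $\sum_j(-1)^j\binom{d}{j}$ vanishes for every $d\geq1$. And $1-(-1)^d=1+(-1)^{d-1}$ vanishes for even $d$ and equals $2$ for odd $d$, which is the opposite of what you need.

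There is also an error in the set-up. For this structure $f=0$ and $1+ug(u)=0$, so $K(u,\ql)\equiv0$, not $1$; the choice $f=g=0$ is the viscous structure, with $\bar K=1$. Hence the terms $-K\qth\bar\qp_x$, $-Ku_x$ and the $K$-parts of $\qd$ that you keep are absent, and in particular $D_3=0$.

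That last fact is the idea the paper uses in place of a spectral sequence. $(\hm B,D)$ is then literally the mapping cone of $\rho^p=(-1)^p(u-\ql)\qp_x\colon(\hm A^p[\ql],D_4)\to(\hm A^p[\ql],D_1)$, which gives $0\to\cok\rho^1_*\to H^1_d\to\ker\rho^2_*\to0$. Both complexes are computed explicitly through Lemma \ref{BZ} on $\hm A=\hm G\oplus\qth\hm G$:
for $D_4$, the classes are $\qp_x^{i_1}(f\xi_0)\xi_{i_2}\cdots\xi_{i_p}$ (Lemmas \ref{CI}, \ref{CO});
for $D_1$, one needs the Bell-polynomial elements $\Psi_d[g]$ of Lemma \ref{CE}, with classes $\diff{\Psi_d[g]}{\qt_0}$ in super degree $1$ and $\xi_i\diff{\Psi_j[g]}{\qt_0}$, $i\neq1$, in super degree $2$ (Lemmas \ref{CM}, \ref{CP}).

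In these bases, $\rho^1_*$ is multiplication by $d-1$ for $d\geq3$, and the target vanishes for $d=2$, so $\cok\rho^1_*=0$. Write a class of $BH^2_d(\hm A,\diff{}{\qt_0},\diff{}{\qt_1})$ as $\qth\qp_x^{d-1}(f\qth^1)+\sum_{i_1+i_2=d-2,\ i_1<i_2}\qp_x^{i_1}(g_{i_1}\qth^1)\qth^{i_2+1}$. The condition that $\rho^2_*$ kills it first forces $f=0$. Then, after identifying the image with a combination of the classes $\xi_r\diff{\Psi_{d-1-r}[h]}{\qt_0}$, it becomes the chain $g_0=0$, $g_{r-1}+g_r=0$ for $r\geq2$, with an extra terminal equation $g_{n-1}=0$ when $d=2n+2$. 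The parity comes from counting equations against unknowns in this chain, not from a sign factor. For $d=2n+3$ the free parameter is $g_1$, with $g_r=(-1)^{r-1}g_1$.

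Your trial representative $Y_0(\qth)=\varphi(u)\qth^{d-1}$ gives $[\diff{}{\qt_0},Y_0](\qth)=\varphi'(u)\qth^1\qth^{d-1}$ whatever $Y_0(u)$ is. So it only feeds the slot $g_0$, which is exactly the one forced to vanish. Without an explicit computation of this kind, hand checks for $d\leq6$ plus a guessed representative do not give the statement for all $d$.
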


We use the same framework as that presented in Sect.\,\ref{BY}. Firstly, we transform the problem into the computation of cohomology groups 
\[
H^p_d(\hm B,D),\quad  \hm B= \bigoplus_{p,d}\hm B^p_d[\ql],\quad \hm B^p_d = \hm A^p_d[\ql]\oplus\hm A^{p+1}_d[\ql],
\]
 with the differential given by 
    \[
        D = \begin{pmatrix}D_1 & D_2\\0 & D_4\end{pmatrix},\quad D_1=\diff{}{\qt(\ql)}-\qth^1,\quad D_2=(-1)^{p+1}(u-\ql)\qp_x,\quad  D_4=\diff{}{\qt(\ql)}.
    \]
In particular, it follows that the complex $(\hm B, D)$ is the mapping cone complex of the cochain map 
\[
\rho^p \colon \kk{\hm A^p[\ql],D_4}\to \kk{\hm A^p[\ql],D_1},\quad  \rho^p = (-1)^p(u-\ql)\qp_x.
\]
Hence, there exists short exact sequence 
        \xym{
    0\ar[r] & \cok\rho^1_*\ar[r]^{i_*} & H^1(\hm B, D)\ar[r]^{\pi_*} & \ker\rho^2_*\ar[r]& 0,
        }
    where $\rho_*$ is the induced map on cohomology groups and $i_*$ is the inclusion to the first component and $\pi_*$ is the projection to the second component.

Let us first compute $H^p_d(\hm A[\ql],D_4)$. Using Lemma \ref{BZ}, it suffices to consider 
\[
BH^p_d\left(\hm A,\diff{}{\qt_0},\diff{}{\qt_1}\right) = \frac{\hm A^p_d\cap\ker\diff{}{\qt_0}\cap \ker\diff{}{\qt_1}}{\hm A^p_d\cap\ima \diff{}{\qt_1}\comp \diff{}{\qt_0}},
\]
which is slightly easier to work with in this particular example. Note that in this case, we have 
\[
\diff{}{\qt_0} = \sum_{s\geq 0}\qth^{s+1}\diff{}{u^{(s)}},\quad \diff{}{\qt_1} = \sum_{s\geq 0}\qp_x^{s}(u\qth^1)\diff{}{u^{(s)}}.
\]
It follows that we can decompose the space 
\[
\hm A = \hm G \oplus\qth\hm G,\quad \hm G=\big\{f\in\hm A\colon \diff{f}{\qth} = 0\big\},
\]
and it is obvious that 
\begin{equation}
    \label{CA}
BH^p_d\left(\hm A,\diff{}{\qt_0},\diff{}{\qt_1}\right) = BH^p_d\left(\hm G,\diff{}{\qt_0},\diff{}{\qt_1}\right)\oplus \qth BH^{p-1}_d\left(\hm G,\diff{}{\qt_0},\diff{}{\qt_1}\right).
\end{equation}
For this reason and to sanitize the notations a bit, in what follows we introduce 
\[
\xi_i = \qth^{i+1},\quad i\geq 0.
\]
Then we have 
\[
\diff{}{\qt_0} = \sum_{i\geq 0}\xi_i\diff{}{u^{(i)}},\quad \diff{}{\qt_1} = \sum_{i\geq 0}\xi_i\mathcal D_i,\quad \diff{}{\qt_1}\comp \diff{}{\qt_0} = \sum_{0\leq i<j}\xi_i\xi_j\mathcal P_{ij},
\]
where we denote
\[
\mathcal D_i = \sum_{s\geq i}\binom{s}{i}u^{(s-i)}\diff{}{u^{(s)}},\quad \mathcal P_{ij} = \mathcal D_i\comp\diff{}{u^{(j)}}-\mathcal D_j\comp\diff{}{u^{(i)}}.
\]
Let us further define the spaces 
\[
\hm G^{(N)} = \big\{f\in\hm G\colon \diff{f}{u^{(n)}} = \diff{f}{\xi_n} = 0,\quad \forall\ n\geq N+1\big\}.
\]
\begin{Lem}
    \label{CI}
For $d\geq 2$, we have 
\[
BH^0_d\left(\hm G,\diff{}{\qt_0},\diff{}{\qt_1}\right) = 0,\quad BH^1_d\left(\hm G,\diff{}{\qt_0},\diff{}{\qt_1}\right) \cong C^\infty(u),
\]
where the second isomorphism reads
\[
C^\infty(u)\to BH^1_d\left(\hm G,\diff{}{\qt_0},\diff{}{\qt_1}\right),\quad f(u)\mapsto \qp_x^{d-1}(f(u)\xi_0).
\]
\end{Lem}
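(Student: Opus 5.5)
The plan is to use the fact that on $\hm G$ the flow $\diff{}{\qt_0}=\sum_i\xi_i\diff{}{u^{(i)}}$ is simply the de Rham differential in the even variables $u^{(i)}$, with $\xi_i=du^{(i)}$. By the formal Poincaré lemma (coefficients smooth in $u$, formal in the jets), its cohomology on $\hm G$ is concentrated in $\hm G^0_0$, i.e. the constants.

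\textbf{The case $p=0$.} Here $BH^0_d(\hm G)=\hm G^0_d\cap\ker\diff{}{\qt_0}\cap\ker\diff{}{\qt_1}$. A $\diff{}{\qt_0}$-closed element of super degree $0$ is a constant, and for $d\geq 2$ it must therefore vanish.

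\textbf{Reduction for $p=1$.} In super degree $1$ the denominator $\hm G^1_d\cap\ima\diff{}{\qt_1}\comp\diff{}{\qt_0}$ is zero, because there is nothing in super degree $-1$. So $BH^1_d$ equals the space of common cocycles. I would reduce this to a system of first-order equations in four steps.
\begin{enumerate}
\item Take a common cocycle $\qo$. By Poincaré, $\qo=\diff{g}{\qt_0}$ for some $g\in\hm G^0_d$, unique since $d\geq 2$.
\item The two flows anticommute, so $\diff{}{\qt_0}\diff{g}{\qt_1}=-\diff{}{\qt_1}\qo=0$.
\item Applying Poincaré again in super degree $1$, we get $\diff{g}{\qt_1}=\diff{h}{\qt_0}$ for some $h\in\hm G^0_d$.
\item Comparing coefficients of $\xi_i$ gives the system
\[
\mathcal D_i g=\diff{h}{u^{(i)}},\qquad i\geq 0.
\]
\end{enumerate}
Conversely, any such pair $(g,h)$ gives a cocycle $\qo=\diff{g}{\qt_0}$. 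Eliminating $h$, the system is equivalent to $\mathcal P_{ij}$-type integrability conditions $\diff{}{u^{(j)}}\mathcal D_i g=\diff{}{u^{(i)}}\mathcal D_j g$.

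\textbf{Existence and injectivity.} The proposed classes come from $g=\qp_x^{d-1}F(u)$ with $F'=f$. Since $\diff{}{\qt_1}$ commutes with $\qp_x$ and $\diff{F}{\qt_1}=uf\xi_0=\diff{G}{\qt_0}$ with $G'=uf$, we can take $h=\qp_x^{d-1}G(u)$. Thus $\qp_x^{d-1}(f\xi_0)$ is indeed a cocycle. The map $f\mapsto\qp_x^{d-1}(f\xi_0)$ is injective, since the coefficient of $\xi_{d-1}$ is $f$.

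\textbf{Surjectivity.} I would argue by induction on the order $N$, i.e. the smallest $N$ with $g\in\hm G^{(N)}$.
\begin{enumerate}
\item For $i>N$, the equation gives $\diff{h}{u^{(i)}}=0$.
\item For $i=N$, we have $\mathcal D_N g=u\diff{g}{u^{(N)}}=\diff{h}{u^{(N)}}$.
\item Cross-differentiating with the $i=N-1$ equation, where $\mathcal D_{N-1}=u\diff{}{u^{(N-1)}}+Nu_x\diff{}{u^{(N)}}$, yields $Nu_x\frac{\partial^2 g}{\partial (u^{(N)})^2}=0$. So $g$ is affine in $u^{(N)}$: $g=a\,u^{(N)}+b$ with $a,b\in\hm G^{(N-1)}$.
\item Feed this back into the conditions for the pairs $(N-1,j)$ and $(N,j)$ with $j<N$, including the Euler-type operator $\mathcal D_0=\sum_s u^{(s)}\diff{}{u^{(s)}}$. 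Together with homogeneity of degree $d$, these should force $a=a(u)$, which is possible only when $N=d-1$.
\item Then $g-\qp_x^{d-1}F$ with $F'=a$ is a solution of strictly lower order, and the induction proceeds.
\item At low order, homogeneity in differential degree $d\geq 2$ combined with the same constraints forces $g=0$.
\end{enumerate}

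\textbf{Main obstacle.} I expect the hard part to be step 4 of the induction: extracting from the integrability conditions that the leading coefficient depends on $u$ alone and that no solutions with $N<d-1$ survive. This requires careful bookkeeping of the operators $\mathcal P_{ij}$ on $\hm G^{(N)}$. I would first try the pairs $(N-1,N)$ and $(0,N)$, and if needed use the $\mathcal D_0$-weight, which together with the differential degree fixes $g$ to be a sum of homogeneous jet monomials.
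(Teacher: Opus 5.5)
\textbf{Overall.} Your framework is the paper's: the Poincaré lemma for $\diff{}{\qt_0}$ on $\hm G$, reduction of the cocycle condition to $\mathcal P_{ij}(g)=0$, induction on the order $N$ of $g$, and subtraction of $\qp_x^{d-1}F(u)$. Your cross-derivative conditions are the same equations, since $[\diff{}{u^{(i)}},\mathcal D_j]=\binom{i+j}{j}\diff{}{u^{(i+j)}}$ is symmetric in $i,j$. The auxiliary $h$ is harmless but unnecessary.

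The parts you actually carry out are fine:
\begin{itemize}
\item the $p=0$ case;
\item the existence check via $h=\qp_x^{d-1}G$ (the paper is less explicit here);
\item injectivity via the $\xi_{d-1}$-coefficient.
\end{itemize}
One small slip: $g$ and $h$ lie in $\hm G^0_{d-1}$, not $\hm G^0_d$, because $\xi_i$ has differential degree $i+1$. Your later conclusions ($N=d-1$, $g=\qp_x^{d-1}F$) already use the correct degree.

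\textbf{The gap: step 4.} Step 4 is the entire content of surjectivity, and you leave it at ``should force''. Your step 3 (the pair $(N-1,N)$) only shows that $g$ is affine in $u^{(N)}$ with a coefficient $a\in\hm G^{(N-1)}$. That says nothing about $a$ depending on $u$ alone.

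\textbf{The missing computation: the pair $(0,N)$.} On $\hm G^{(N)}$ every term of $\mathcal D_N$ with $s>N$ vanishes, so $\mathcal D_N=u\,\diff{}{u^{(N)}}$ there. Meanwhile $\mathcal D_0=u\diff{}{u}+\sum_{s\geq1}u^{(s)}\diff{}{u^{(s)}}$. Hence
\[
0=\mathcal P_{0N}(g)=\mathcal D_0\diff{g}{u^{(N)}}-\mathcal D_N\diff{g}{u}=\sum_{s\geq 1}u^{(s)}\diff{}{u^{(s)}}\Bigl(\diff{g}{u^{(N)}}\Bigr).
\]
In your $(g,h)$ formulation this is exactly $\diff{}{u^{(N)}}\mathcal D_0 g-\diff{}{u}\mathcal D_N g$.

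The operator on the right multiplies a monomial by its length. So $\diff{g}{u^{(N)}}$ is a function of $u$ alone. It has differential degree $d-1-N$, so by homogeneity it vanishes unless $N=d-1$.

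\textbf{Closing the induction.} This one line is the paper's argument. It makes your step 3 and the affine analysis unnecessary, and it finishes the induction at once:
\begin{itemize}
\item For $N<d-1$, you drop to $\hm G^{(N-1)}$.
\item At $N=0$, $g=g(u)$ has degree $0\neq d-1$, so $g=0$.
\item At $N=d-1$, subtract $\qp_x^{d-1}F$ with $F'=\diff{g}{u^{(d-1)}}$. This is a solution by your existence part, and the difference lies in $\hm G^{(d-2)}$, hence vanishes.
\end{itemize}
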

\begin{proof}
For any $f\in\hm A^0_d$, with $d\geq 2$, if we have 
\[
\diff{f}{\qt_0} = \sum_{i\geq 0}\xi_i\diff{f}{u^{(i)}} = 0,
\]
then it is obvious that $f = 0$, and this proves the first assertion.

For the second assertion, take a cocycle $\qo\in\hm G^1_d$ with 
\[
\diff{\qo}{\qt_0} = \diff{\qo}{\qt_1} = 0.
\]
It follows from the triviality of Hamiltonian cohomology \cite{getzler2002darboux} that there exists some $f\in\hm G^0_{d-1}$ such that 
\[
\qo = \diff{f}{\qt_0},\quad \diff{}{\qt_1}\diff{f}{\qt_0} = 0,
\]
which implies that 
\[
\mathcal P_{ij}(f) = 0,\quad \forall\ 0\leq i< j.
\]
Assume $f\in\hm G^{(N)}$ for some $N>0$, then it follows that 
\[
0 = \mathcal P_{0N}(f) = \mathcal D_0\diff{f}{u^{(N)}}-\mathcal D_N\diff{f}{u} = \sum_{i\geq 1}u^{(i)}\diff{}{u^{(i)}}\kk{\diff{f}{u^{(N)}}} = 0.
\]
This shows that $\diff{f}{u^{(N)}}$ is actually a function in $u$, and in particular it must vanish if $N\neq d-1$. So we conclude that $f = 0$ if 
\[
\diff{f}{u^{(d-1)}} = 0.
\]

Now let us consider the class 
\[
\tilde\qo = \diff{\tilde f}{\qt_0},\quad \tilde f = f-\qp_x^{d-1}g(u),\quad g(u) = \int \diff{f}{u^{(d-1)}} du.
\]
It is easy to see that $\qo'$ is indeed a cocycle, and it follows from the construction that
\[
\diff{\tilde f}{u^{(d-1)}} = 0,
\]  
therefore we conclude that $\tilde f = 0$ and hence the lemma holds true.
\end{proof}
By applying a similar idea used in the above proof, we can compute all other cohomology groups. In what follows, let us denote by $\qa_{i_1,\dots,i_p}$ the coefficient of $\xi_{i_1}\dots\xi_{i_p}$ for any $\qa\in\hm G^p$. Then for any $\qo\in\hm G^p$ and $i_0<i_1<\dots<i_p$, we have 
\begin{align}
\label{CB}
\kk{\diff{\qo}{\qt_0}}_{i_0,\dots,i_p} &= \sum_{j=0}^p (-1)^j\diff{}{u^{(i_j)}}\qo_{i_0,\dots,\hat i_j,\dots,i_p},\\
\label{CC}
\kk{\diff{\qo}{\qt_1}}_{i_0,\dots,i_p} &= \sum_{j=0}^p (-1)^j\mathcal D_j \qo_{i_0,\dots,\hat i_j,\dots,i_p}.
\end{align} 

\begin{Lem}
    \label{CO}
    For $p\geq 2$ and $d\geq 2$ we have an isomorphism
    \[
    \bigoplus_{i=1}^{n(p,d)} C^\infty(u) \xrightarrow{\cong} BH^p_d\left(\hm G,\diff{}{\qt_0},\diff{}{\qt_1}\right),
    \]
    where $n(p,d)$ is the number of integer solutions of the equation 
    \[
    i_1+i_2+\dots+i_p = d-p,\quad 0\leq i_1<i_2<\dots<i_p.
    \] 
    For given $n(p,d)$ functions $(f_{i_1,\dots, i_p})$ where $i_1,\dots,i_p$ satisfy the above equation, the isomorphism reads 
    \[
    (f_{i_1,\dots, i_p})\mapsto \sum_{i_1,\dots,i_p}\qp_x^{i_1}(f_{i_1,\dots,i_p}\xi_0)\xi_{i_2}\dots\xi_{i_p}.
    \]     
\end{Lem}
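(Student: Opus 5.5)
We follow the strategy of Lemma \ref{CI}, extended from one to $p$ odd variables. Take a cocycle $\qo\in\hm G^p_d$ with $\diff{\qo}{\qt_0}=\diff{\qo}{\qt_1}=0$. The operator $\diff{}{\qt_0}=\sum_i\xi_i\diff{}{u^{(i)}}$ on $\hm G$ is a Koszul-type differential (a de Rham differential in the jet variables $u^{(i)}$, with $\xi_i$ playing the role of $du^{(i)}$). Its cohomology in differential degree $d\geq 2$ and super degree $p\geq1$ vanishes, either by Getzler's triviality result as used in Lemma \ref{CI} or directly by the Poincar\'e lemma in the variables $u^{(i)}$. Hence $\qo=\diff{f}{\qt_0}$ for some $f\in\hm G^{p-1}_{d-1}$, determined up to $\diff{}{\qt_0}\hm G^{p-2}_{d-2}$. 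The cocycle condition for $\diff{}{\qt_1}$ then becomes $\diff{}{\qt_1}\diff{f}{\qt_0}=0$, and the coboundaries are exactly the elements $\diff{}{\qt_1}\diff{g}{\qt_0}$. Thus
\[
BH^p_d\cong \frac{\{f\in\hm G^{p-1}_{d-1}\colon \diff{}{\qt_1}\diff{f}{\qt_0}=0\}}{\ker\diff{}{\qt_0}+\diff{}{\qt_1}\hm G^{p-2}_{d-1}}.
\]

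Next we run the top-variable induction. Suppose $f\in\hm G^{(N)}$ with $N$ maximal, meaning $f$ genuinely depends on $u^{(N)}$ or on $\xi_N$. Using \eqref{CB} and \eqref{CC}, we extract the coefficient of $\xi_0\xi_N\cdots$ in $\sum\xi_i\xi_j\mathcal P_{ij}(f)$. As in Lemma \ref{CI}, $\mathcal P_{0N}$ acts on $\diff{f}{u^{(N)}}$ through the Euler operator $\sum_{i\geq1}u^{(i)}\diff{}{u^{(i)}}$. There are additional terms, however, coming from the $\xi$-dependence of $f$ with indices $j<N$, and from the $\mathcal D_j$ with $j>0$ acting on coefficients. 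We will organize the computation by a weight grading: let $\ell$ be the number of $u^{(i)}$ with $i\geq1$, i.e.\ the polynomial degree in jets, and isolate the component of lowest $\ell$. This should yield the following: modulo adding $\diff{}{\qt_0}$-exact terms and terms of the form $\diff{}{\qt_1}(\cdot)$, the $u^{(N)}$-dependence and then the $\xi_N$-dependence of $f$ can be removed unless the relevant coefficient is a function of $u$ alone. The $\mathcal D_i$ are exactly the operators that shift $u^{(s)}\mapsto\binom{s}{i}u^{(s-i)}$, so the $\diff{}{\qt_1}$-exact terms are what allow us to kill mixed terms. Descending induction on $N$ then reduces $f$ to a sum of terms $h(u)\,\xi_{j_1}\cdots\xi_{j_{p-1}}$ times a pure power structure. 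Homogeneity of differential degree $d-1$ together with the requirement that every $u^{(i)}$ factor be absent leaves exactly the monomials $g(u)\,\xi_{i_2}\cdots\xi_{i_p}$ with $\sum$ of indices constrained. Pushing these through $\diff{}{\qt_0}$ and shifting by total derivatives gives the representatives $\qp_x^{i_1}(f\xi_0)\xi_{i_2}\cdots\xi_{i_p}$, with $i_1+\dots+i_p=d-p$ and $i_1<\dots<i_p$. Here $\deg\xi_i=i+1$, which accounts for the $-p$.

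Finally, we check that these representatives are nonzero and independent. Each one is visibly a cocycle: applying $\diff{}{\qt_0}$ gives $\qp_x^{i_1}(f\xi_0)$, whose $u$-derivative pairs with $\xi$ and gives zero. The check against $\diff{}{\qt_1}$ uses $[\diff{}{\qt_1},\qp_x]$-type commutation. For nontriviality, we read off the coefficient of $f(u)\xi_{i_1}\cdots\xi_{i_p}$ with no $u^{(s)}$, $s\geq1$, factors: it is invariant modulo $\ima\diff{}{\qt_1}\comp\diff{}{\qt_0}$, since every term of $\mathcal P_{ij}$ either differentiates in some $u^{(s)}$ with $s\geq1$ and thus leaves a jet factor, or hits only $u$ and lowers index sums inconsistently.

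The main obstacle is the induction step. Unlike Lemma \ref{CI}, the coefficient $\mathcal P_{0N}$ no longer isolates $\diff{f}{u^{(N)}}$ cleanly, because $f$ carries odd variables $\xi_j$ and the operators $\mathcal D_j$ mix indices. We expect to handle this by a secondary induction on the largest $\xi$-index, treating the $\xi$'s as exterior coefficients and applying the $p=1$ argument componentwise.
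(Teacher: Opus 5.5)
Your reduction to primitives is correct. The coboundary ambiguity of $f$ is $\diff{}{\qt_1}\hm G^{p-2}_{d-2}$, not $\hm G^{p-2}_{d-1}$. Your invariant, the length-$0$ part (the paper's map $\pi$ with $u^{(s+1)}\mapsto 0$), is also the right one. However, the reason it kills coboundaries is not the one you give. The only terms of $\diff{}{\qt_1}\diff{}{\qt_0}g$ that survive $\pi$ are $\sum_{k,i}u\,\xi_k\xi_i\,\pi\big(\diff{}{u^{(k)}}\diff{}{u^{(i)}}g\big)$, and these cancel by symmetry of second derivatives.

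The genuine gap is the step showing that every cocycle with $\pi(\qo)=0$ is a coboundary. You only sketch it (``should yield'', ``we expect''), and the endpoint you do state is false. You claim the descending induction leaves jet-free primitives $h(u)\xi_{j_1}\cdots\xi_{j_{p-1}}$. But $\pi\big(\diff{}{\qt_0}(h(u)\xi_J)\big)=h'(u)\xi_0\xi_J$ always contains $\xi_0$, while $\pi\big(\qp_x^{i_1}(f\xi_0)\xi_{i_2}\cdots\xi_{i_p}\big)=f\xi_{i_1}\cdots\xi_{i_p}$. So no class with $i_1\geq 1$ has a jet-free primitive. For example, with $p=2$, $d=5$, the class $\qp_x(f\xi_0)\xi_2=\diff{}{\qt_0}(fu_x\xi_2)$ has none. ``Shifting by total derivatives'' is not an operation available in the quotient.

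The missing argument, as in the paper, works with $\qo$ itself rather than with a primitive. Suppose $\pi(\qo)=0$ and $\qo\in\hm G^{(N)}$, and put $E=\sum_{i\geq 1}u^{(i)}\diff{}{u^{(i)}}$. On $\hm G^{(N)}$ one has $\mathcal D_N=u\diff{}{u^{(N)}}$ and $\mathcal P_{0N}(h)=E\big(\diff{h}{u^{(N)}}\big)$.

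First, each coefficient $g=\qo_{0,i_2,\dots,i_{p-1},N}$ has no length-$0$ part, so $E$ can be inverted on it. Choose $h\in\hm G^{(N)}$ with $E\big(\diff{h}{u^{(N)}}\big)=(-1)^pg$. The coboundary $\diff{}{\qt_1}\diff{}{\qt_0}(h\xi_{i_2}\cdots\xi_{i_{p-1}})$ lies in $\hm G^{(N)}$, has $(0,i_2,\dots,i_{p-1},N)$-coefficient exactly $g$, and touches no other $(0,\ast,N)$-coefficient. Hence all $\qo_{0,\ast,N}$ can be removed without changing $\pi(\qo)$.

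Next, take $0<i_1<\dots<i_{p-1}<N$. The two cocycle equations at the index $(0,I,N)$ reduce to $\diff{\qo_{I,N}}{u}+(-1)^p\diff{\qo_{0,I}}{u^{(N)}}=0$ and $\mathcal D_0\qo_{I,N}+(-1)^pu\diff{\qo_{0,I}}{u^{(N)}}=0$. Eliminating $\qo_{0,I}$ gives $E\qo_{I,N}=0$. So $\qo_{I,N}\in C^\infty(u)$, hence $\qo_{I,N}=0$ because $\pi(\qo)=0$.

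Now $\qo$ is independent of $\xi_N$, and $\diff{}{\qt_0}$-closedness forces $\diff{\qo}{u^{(N)}}=0$. So $\qo\in\hm G^{(N-1)}$, and one descends to $\hm G^{(0)}$, where $\qo=0$ since $p\geq 2$.

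Your ``secondary induction on the largest $\xi$-index'' would have to supply exactly two things: coupling both cocycle conditions in the $(0,I,N)$ component, and inverting $E$ on positive length. Applying the $p=1$ argument componentwise does not achieve this, because $\diff{}{\qt_1}$ mixes components through the $\mathcal D_i$.
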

\begin{proof}
    Define a morphism of $C^\infty(u)$-algebras 
    \[
    \pi\colon \hm G\to C^\infty(u)\otimes\bigwedge[\xi_0,\xi_1,\dots],\quad u^{(s+1)}\mapsto 0,\quad \xi_s\mapsto \xi_s,\quad s\geq 0.
    \]
    It is easy to see, by applying the definition of $\mathcal P_{ij}$, that the map $\pi$ annihilates coboundaries, and hence $\pi$ is well-defined on cohomologies. It suffices to show that the induced map 
    \[
    \pi\colon BH^p\left(\hm G,\diff{}{\qt_0},\diff{}{\qt_1}\right)\to C^\infty(u)\otimes\bigwedge\nolimits^p[\xi_0,\xi_1,\dots]
    \]
    is an isomorphism.

    The above map is surjective, due to the identity 
    \[
    \pi\kk{\qp_x^{i_1}(f(u)\xi_0)\xi_{i_2}\dots\xi_{i_p}} = f(u)\xi_{i_1}\xi_{i_2}\dots\xi_{i_p},\quad \forall\ f(u)\in C^\infty(u).
    \]
    It only remains to verify that $\pi$ is injective. Take a cocycle $\qo\in\hm G^p$ with
    \[\diff{\qo}{\qt_0} = \diff{\qo}{\qt_1} = 0,\quad \pi(\qo) = 0.\]
    Assume that $\qo\in\hm G^{(N)}$ for some $N>0$, and for any $0<i_2<\dots<i_{p-1}<N$, let us denote by 
    \[
    g = \qo_{0,i_2,\dots,i_{p-1}, N},\quad g\in\hm A^0.
    \]
   If $g\neq 0$, due to the assumption that $\pi(g) = 0$, it follows that there exists $h\in\hm A^0$ such that 
    \[
    \sum_{i\geq 1}u^{(i)}\diff{}{u^{(i)}}\diff{h}{u^{(N)}} = (-1)^p g,\quad h\in\hm G^{(N)}.
    \]
    Let us define the coboundary
    \[
    \qb = \diff{}{\qt_1}\diff{}{\qt_0}\kk{h\xi_{i_2}\dots\xi_{i_{p-1}}},
    \]
    then it follows that 
   \[ 
    \qb_{0,i_2,\dots,i_{p-1}, N} = (-1)^p \mathcal P_{0,N}(h) = g,\quad \qb\in\hm G^{(N)}.
    \]
    Therefore, after replacing the cocycle $\qo$ by $\qo-\qb$, we may assume that
    \[
    \qo_{0,i_2,\dots,i_{p-1}, N} = 0,\quad 0<i_2<\dots<i_{p-1}<N.
    \] 
    Now for any $0<i_1<\dots<i_{p-1}<N$, it follows from \eqref{CB} and \eqref{CC} that 
    \begin{align*}
    \kk{\diff{\qo}{\qt_0}}_{0,i_1,\dots,i_{p-1},N} &= \diff{\qo_{i_1,\dots,i_{p-1},N}}{u}+(-1)^p \diff{\qo_{0,i_1,\dots,i_{p-1}}}{u^{(N)}},\\
    \kk{\diff{\qo}{\qt_1}}_{0,i_1,\dots,i_{p-1},N} &= \mathcal D_0\qo_{i_1,\dots,i_{p-1},N}+(-1)^p \mathcal D_N\qo_{0,i_1,\dots,i_{p-1}}\\
    &=\sum_{i\geq 0}u^{(i)}\diff{\qo_{i_1,\dots,i_{p-1},N}}{u^{(i)}}+(-1)^p u\diff{\qo_{0,i_1,\dots,i_{p-1}}}{u^{(N)}}.
    \end{align*}
    We conclude that 
    \[
    \sum_{i\geq 1}u^{(i)}\diff{\qo_{i_1,\dots,i_{p-1},N}}{u^{(i)}} = 0,
    \]
    which implies that $\qo_{i_1,\dots,i_{p-1},N}=0$ since we assume that $\pi(\qo) = 0$. Hence, we have  
    \[
    \diff{\qo}{\xi_N} = 0,
    \]
    but we also have 
    \[
    \kk{\diff{\qo}{\qt_0}}_{i_1,\dots,i_{p},N} = (-1)^{p}\diff{\qo_{i_1,\dots,i_{p}}}{u^{(N)}} = 0,\quad 0\leq i_1<\dots<i_p<N.
    \]
    It follows that $\qo\in\hm G^{(N-1)}$, and by induction we conclude that $\qo = 0$, hence $\pi$ is injective, and the lemma holds true.
\end{proof}

Next let us proceed to computing the cohomology groups $H^p_d(\hm A[\ql],D_1)$. By applying Lemma \ref{BZ}, we again work with the cohomology groups  
\[
BH^p_d\left(\hm A,\diff{}{\qt_0},\diff{}{\qt_1}-\qth^1\right)
\]
The decomposition 
\[
\hm A = \hm G\oplus\qth\hm G
\]
is also a decomposition subcomplex, and hence 
\begin{equation}
    \label{CD}
BH^p_d\left(\hm A,\diff{}{\qt_0},\diff{}{\qt_1}-\qth^1\right) = BH^p_d\left(\hm G,\diff{}{\qt_0},\diff{}{\qt_1}-\qth^1\right)\oplus \qth BH^{p-1}_d\left(\hm G,\diff{}{\qt_0},\diff{}{\qt_1}-\qth^1\right).
\end{equation}
For this reason, we will continue using the notations $\xi_i = \qth^{i+1}$.

We start by proving a technical lemma that will be used in the computation of cohomology groups.
\begin{Lem}
    \label{CE}
For any smooth function $g(u)$ and $d\geq 2$, there exists a differential polynomial $\Psi_d[g]\in\hm G^0_d$, such that $\Psi_2[g] = \frac{1}{2}g(u)u_x^2$, and 
\[
\Psi_{d+1}[g] = g(u)u_xu^{(d)}+\hm G^{(d-1)},\quad \kk{\diff{}{\qt_1}-\xi_0}\diff{\Psi_d[g]}{\qt_0} = 0,\quad d\geq 2.
\]
\end{Lem}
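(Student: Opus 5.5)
I would prove the lemma by an explicit inductive construction. First I would unpack the condition in components. Writing $\qp_i=\diff{}{u^{(i)}}$, we have
\[
\kk{\diff{}{\qt_1}-\xi_0}\diff{\Psi}{\qt_0}=\sum_{0\leq i<j}\xi_i\xi_j\mathcal P_{ij}(\Psi)-\sum_{j\geq 1}\xi_0\xi_j\,\qp_j\Psi .
\]
So $\Psi\in\hm G^0_d$ is admissible if and only if $\mathcal P_{ij}(\Psi)=0$ for $1\leq i<j$, and $\mathcal P_{0j}(\Psi)=\qp_j\Psi$ for $j\geq 1$. For $d=2$ one checks this directly on $\Psi_2[g]=\frac12 g(u)u_x^2$. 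Here $\qp_1\Psi_2=gu_x$, $\mathcal D_0(gu_x)=ug'u_x+gu_x$ and $\mathcal D_1(\frac12 g'u_x^2)=ug'u_x$, so $\mathcal P_{01}(\Psi_2)=gu_x=\qp_1\Psi_2$. All other conditions are vacuous.

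A structural observation should help organize the general case. Since $\mathcal D_0\log u=1$ and $\mathcal D_i\log u=0$ for $i\geq 1$, we get $\diff{}{\qt_1}\log u=\xi_0$, and hence
\[
\diff{}{\qt_1}-\xi_0=u\comp\diff{}{\qt_1}\comp u^{-1}
\]
on $\hm G$. The condition is therefore equivalent to $\diff{}{\qt_1}\kk{u^{-1}\diff{\Psi}{\qt_0}}=0$. The two differentials $\diff{}{\qt_0}$ and $u^{-1}$-twisted $\diff{}{\qt_1}$ both have the "Euler-operator" form used in Lemma \ref{CI} and Lemma \ref{CO}. So the natural method is the same descending induction on the highest derivative $N$ appearing, as in those proofs.

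Concretely, for the step $d\to d+1$ I would make the ansatz $\Psi_{d+1}=g u_xu^{(d)}+R$ with $R\in\hm G^{(d-1)}$ unknown, and impose the equations from the top down.
\begin{itemize}
\item The coefficients of $\xi_i\xi_d$ give equations of the form $\mathcal E(\cdot)=(\text{known})$, where $\mathcal E=\sum_{i\geq1}u^{(i)}\qp_i$ or its $u$-including version $\mathcal D_0$. These equations fix $\qp_{d-1}R$ up to the kernel.
\item For instance, $\mathcal P_{0d}(\Psi)=\qp_d\Psi$ reads $\mathcal D_0(gu_x)-\mathcal D_d\qp_0 R-gu_x=0$. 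This yields $\mathcal D_d\qp_0R=ug'u_x$, and $\mathcal D_d$ on $\hm G^{(d-1)}$ only sees $u\,\qp_d$, so this forces the correct $\qp_{d-1}$-structure after integrating.
\end{itemize}
At each lower level $N=d-1,d-2,\dots$ the equations for $\qp_NR$ become linear equations whose principal part is $\mathcal E$, with right-hand sides built from the already determined higher parts. I would solve them by inverting $\mathcal E$ on monomials of positive length, which is possible since $\mathcal E$ acts by the length. Terms of length zero must be checked to vanish, using the compatibility $\mathcal P_{ij}\mathcal P_{kl}$-type relations coming from $\diff{}{\qt_1}^2=\diff{}{\qt_0}^2=0$ and their commutation.

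The main obstacle is exactly this solvability: showing that the obstructions (the length-zero parts, i.e.\ the images under the map $\pi$ of Lemma \ref{CO}) vanish at every step. I would handle this by computing the obstruction class explicitly. It lies in a group like $BH^2_{d+1}$ for the pair $(\diff{}{\qt_0},\diff{}{\qt_1}-\xi_0)$, and I would show that it is hit by the leading term $gu_xu^{(d)}$ with the normalization chosen. If direct verification becomes messy, I would instead look for a closed formula: conjugating by $u$ suggests passing to $v=\log u$, in which the twisted differential becomes untwisted. There one can try $\Psi_d$ obtained from $\qp_x^{d-2}$ applied to $\Psi_2$, corrected by lower terms, and verify the defining identity by the commutation relations of $\qp_x$ with $\diff{}{\qt_0}$, $\diff{}{\qt_1}$.
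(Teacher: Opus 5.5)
There is a genuine gap. Your reformulation ($\mathcal P_{ij}\Psi=0$ for $1\le i<j$, $\mathcal P_{0j}\Psi=\qp_j\Psi$ for $j\ge 1$), the check of $\Psi_2[g]$, and the identity $\diff{}{\qt_1}-\xi_0=u\circ\diff{}{\qt_1}\circ u^{-1}$ (valid where $u\neq 0$) are all correct. Beyond $\Psi_2$, however, nothing is proved. The solvability of the overdetermined system for $R$, which you yourself call the main obstacle, is only announced, and the tools you point to do not close it.

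\emph{The induction cannot supply existence.} The descending induction of Lemmas \ref{CI}, \ref{CO}, \ref{CM} is a uniqueness device: it strips the top derivative off a given cocycle. In Lemma \ref{CM} it is precisely the present lemma that supplies the existence step.

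\emph{The obstruction framing does not work as stated.} The quantity $(\diff{}{\qt_1}-\xi_0)\diff{}{\qt_0}(gu_xu^{(d)})$ is already a coboundary, so the real question is whether it lies in the image of $\hm G^{(d-1)}$, not whether some class vanishes. Moreover, the paper's description of $BH^2$ for this pair (Lemma \ref{CP}) is built from the $\Psi_j[g]$ themselves, so invoking it would be circular.

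\emph{Your sample computation is wrong.} The operator $\mathcal D_d=\sum_{s\ge d}\binom{s}{d}u^{(s-d)}\qp_s$ vanishes on $\hm G^{(d-1)}$, so $\mathcal D_d\qp_0R=0$. Taken literally, your equation $\mathcal D_d\qp_0R=ug'u_x$ would say that no $\Psi_{d+1}$ exists. You dropped the term $\mathcal D_d\qp_0(gu_xu^{(d)})=ug'u_x$, which cancels the $ug'u_x$ in $\mathcal D_0(gu_x)$. The $\xi_0\xi_d$ equation therefore holds for the leading term alone and says nothing about $R$.

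\emph{The fallback via $\qp_x^{d-2}\Psi_2$ fails.} We have $(\diff{}{\qt_1}-\xi_0)\circ\qp_x=\qp_x\circ(\diff{}{\qt_1}-\xi_0)+\xi_1$, so $\qp_x$ does not preserve solutions. Already $\Psi_3[g]-\qp_x\Psi_2[g]=\frac16 g'u_x^3$, and finding these corrections for all $d$ is the entire content of the lemma.

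The missing idea is an explicit closed formula. The paper gives one via exponential Bell polynomials, equivalently through the generating function
\[
\Phi(z)=\sum_{d\ge 2}d\,\Psi_d[g]\frac{z^d}{d!}=\sum_{r\ge 0}\frac{d^rg}{du^r}\,\frac{u^\bullet(z)^{r+2}}{(r+2)\,r!},\qquad u^\bullet(z)=\sum_{s\ge 1}\frac{z^s}{s!}u^{(s)}.
\]
Its key feature is that $u^\bullet$ behaves almost like an eigenvector for all relevant operators:
\[
\qp_0u^\bullet=0,\quad \mathcal D_0u^\bullet=u^\bullet,\quad \qp_ku^\bullet=\frac{z^k}{k!},\quad \mathcal D_ku^\bullet=\frac{z^k}{k!}\bigl(u^\bullet+u\bigr)\quad (k\ge 1).
\]
\begin{itemize}
\item For $1\le i<j$, $\mathcal D_i\qp_j\Phi$ equals $\frac{z^iz^j}{i!\,j!}$ times a series independent of $i,j$, so $\mathcal P_{ij}\Phi=0$.
\item In $\mathcal P_{0j}\Phi$ the $u\,g^{(r+1)}$ terms cancel. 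The remainder, after reindexing in $r$, collapses to $\frac{z^j}{j!}\sum_r g^{(r)}\frac{(u^\bullet)^{r+1}}{r!}=\qp_j\Phi$.
\item Comparing coefficients of $z^d$ gives the required identity for every $\Psi_d[g]$.
\item The normalizations follow from $B_{2,2}=u_x^2$ and from the coefficient $d+1$ of $u_xu^{(d)}$ in $B_{d+1,2}$, which cancels the prefactor $\frac1{d+1}$. All other terms involve only $u^{(j)}$ with $j\le d-1$.
\end{itemize}
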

\begin{proof}
Define the differential polynomial by 
\begin{equation}
    \label{CH}
\Psi_d[g] = \frac{1}{d}\sum_{r=0}^{d-2}(r+1)\frac{d^r g}{d u^r}B_{d,r+2}(u_x,u_{xx},\dots, u^{d-1-r}),
\end{equation}
where $B_{n,k}$ are the (partial) exponential Bell polynomials with the generating function 
\begin{equation}
    \label{CG}
\sum_{n\geq 0}\frac{z^n}{n!}\sum_{k=0}^n \qe^kB_{n,k} = \exp\kk{\qe\sum_{s\geq 1}\frac{z^s}{s!}u^{(s)}}.
\end{equation}
It is straightforward to verify that indeed 
\[
\Psi_2[g] = \frac{1}{2}g(u)u_x^2,\quad \Psi_d[g] = g(u)u_xu^{(d-1)}+\hm G^{(d-2)},\quad d\geq 3.
\]
Let us define the generating function
\[
\Phi(z) = \sum_{d\geq 2}d\Psi_d[g]\frac{z^d}{d!} =\sum_{r\geq 0} \frac{d^r g}{d u^r}\frac{u^\bullet(z)^{r+2}}{(r+2)r!},\quad u^\bullet(z) = \sum_{s\geq 1}\frac{z^s}{s!}u^{(s)},
\]
where we have used the generating function \eqref{CG}. Note that we have 
\begin{align*}
    \diff{}{u}u^\bullet(z) &= 0,\quad \mathcal D_0 u^\bullet(z) = u^\bullet(z)\\
\diff{}{u^{(k)}}u^\bullet(z) &= \frac{z^k}{k!},\quad \mathcal D_k u^\bullet(z) =\frac{z^k}{k!} \kk{u^\bullet(z)+u},\quad k\geq 1.
\end{align*}
Therefore, for $1\leq i<j$, 
\begin{align*}
    \mathcal P_{ij}\Phi(z) =&\, \sum_{r\geq 0}\frac{d^r g}{d u^r}\mathcal P_{ij}\kk{\frac{u^\bullet(z)^{r+2}}{(r+2)r!}}\\
     =&\, \sum_{r\geq 0}\frac{d^r g}{d u^r}\kk{\frac{z^j}{j!}\mathcal D_i\frac{u^\bullet(z)^{r+1}}{r!}-\frac{z^i}{i!}\mathcal D_j\frac{u^\bullet(z)^{r+1}}{r!}}\\
     =&\,0,\\
     \mathcal P_{0j}\Phi(z) =&\,\sum_{r\geq 0}\mathcal D_0\kk{\frac{z^j}{j!}\frac{d^r g}{d u^r}\frac{u^\bullet(z)^{r+1}}{r!}}-\mathcal{D}_j\kk{\frac{d^{r+1} g}{d u^{r+1}}\frac{u^\bullet(z)^{r+2}}{(r+2)r!}}\\
     =&\,\sum_{r\geq 0}\frac{z^j}{j!}u\frac{d^{r+1} g}{d u^{r+1}}\frac{u^\bullet(z)^{r+1}}{r!}+\frac{z^j}{j!}\frac{d^r g}{d u^r}\frac{(r+1)u^\bullet(z)^{r+1}}{r!}\\
     &-\sum_{r\geq 0}\frac{z^j}{j!}\frac{d^{r+1} g}{d u^{r+1}}\frac{u^\bullet(z)^{r+2}+uu^\bullet(z)^{r+1}}{r!}\\
     =&\,\sum_{r\geq 0}\frac{z^j}{j!}\frac{d^{r} g}{d^r u}\frac{u^\bullet(z)^{r+1}}{r!}\\
     =&\,\diff{\Phi[z]}{u^{(j)}}.
\end{align*}
It follows that 
\[
\diff{}{\qt_1}\diff{\Phi[z]}{\qt_0} = \xi_0 \diff{\Phi[z]}{\qt_0},
\]
which shows that $\Psi_d[g]$ defined in \eqref{CH} satisfy the requirement. The lemma is proved.
\end{proof}
\begin{Lem}
    \label{CM}
For $d\geq 2$, we have 
\[
BH^0_d\left(\hm G,\diff{}{\qt_0},\diff{}{\qt_1}-\xi_0\right) = 0,\quad BH^1_{d+1}\left(\hm G,\diff{}{\qt_0},\diff{}{\qt_1}-\xi_0\right) \cong C^\infty(u),
\]
where the second isomorphism reads
\[
C^\infty(u)\to BH^1_{d+1}\left(\hm G,\diff{}{\qt_0},\diff{}{\qt_1}\right),\quad f(u)\mapsto \diff{\Psi_{d}[f]}{\qt_0}.
\]
Furthermore, 
\[
BH^1_2\left(\hm G,\diff{}{\qt_0},\diff{}{\qt_1}-\xi_0\right) = 0.
\]
\end{Lem}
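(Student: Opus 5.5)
I will follow the argument of Lemma \ref{CI}, now applied to the pair $\diff{}{\qt_0}$ and $\diff{}{\qt_1}-\xi_0$.

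\emph{The $BH^0$ statement.} A cocycle $f\in\hm G^0_d$ with $d\geq 2$ satisfies $\diff{f}{\qt_0}=\sum_i\xi_i\diff{f}{u^{(i)}}=0$. Hence $f$ is constant, and therefore $f=0$.

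\emph{Reduction of the $BH^1$ statement.} Take a cocycle $\qo\in\hm G^1_{d+1}$. Triviality of the $\diff{}{\qt_0}$-cohomology (Getzler, or Theorem \ref{BM}) gives $\qo=\diff{f}{\qt_0}$ with $f\in\hm G^0_d$. Since $d\geq 2$, this $f$ is unique. The second cocycle condition then becomes
\[
\Big(\diff{}{\qt_1}-\xi_0\Big)\diff{f}{\qt_0}=\sum_{i<j}\xi_i\xi_j\Big(\mathcal P_{ij}f-\qd_{i0}\diff{f}{u^{(j)}}\Big)=0 .
\]
So the conditions are $\mathcal P_{ij}f=0$ for $1\leq i<j$ and $\mathcal P_{0j}f=\diff{f}{u^{(j)}}$. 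The coboundaries $\diff{}{\qt_1}\diff{}{\qt_0}$ applied to $\hm G^{-1}$ vanish, so the cohomology is exactly the space of such $f$ of degree $d$.

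\emph{Determining $f$ by its top coefficient.} Suppose $f\in\hm G^{(N)}$ with $N\geq 1$. Then
\[
\mathcal P_{0N}f=\mathcal D_0\diff{f}{u^{(N)}}-\mathcal D_N\diff{f}{u}=\sum_{i\geq 0}u^{(i)}\diff{}{u^{(i)}}\diff{f}{u^{(N)}}-u\,\diff{}{u}\diff{f}{u^{(N)}}-(\text{terms from }\diff{f}{u^{(s)}},\ s>N),
\]
where the last group vanishes. The equation therefore reads $(E-1)\diff{f}{u^{(N)}}=0$, where $E=\sum_{i\geq 1}u^{(i)}\diff{}{u^{(i)}}$ counts length. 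This forces $\diff{f}{u^{(N)}}$ to have length exactly one, of the form $\sum_k c_k(u)u^{(k)}$ with $k\leq N$.

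\emph{Normalizing $f$.} By differential degree $d$, only $k=d-N$ occurs. Using the conditions $\mathcal P_{ij}f=0$ for the remaining pairs:
\begin{itemize}
\item if $N=d$, one expects $c=0$, giving the exceptional vanishing $BH^1_2=0$ via the $d=2$ check, and more generally showing $N\leq d-1$;
\item if $N<d-1$, the mixed conditions should force vanishing unless $f$ can be lowered.
\end{itemize}
Concretely, I would show that the top part is $g(u)u_xu^{(d-1)}$, that is $N=d-1$ and $k=1$, for $d\geq 3$, and $\tfrac12 g u_x^2$ for $d=2$. Then subtract $\Psi_d[g]$ from Lemma \ref{CE}, which is itself a cocycle with the same top term. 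The difference lies in $\hm G^{(d-2)}$, so by downward induction on $N$ (the top coefficient must again be of length one, hence of the top form, which is impossible at smaller $N$) it is zero. Thus $f=\Psi_d[g]$, and this gives both injectivity and surjectivity of $g\mapsto\diff{\Psi_d[g]}{\qt_0}$.

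\emph{The degree-$2$ case.} For $BH^1_2$ we need $f\in\hm G^0_1$, so $f=c(u)u_x$. A direct computation of $\mathcal P_{01}f=\diff{f}{u_x}$ gives $c=0$.

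The main obstacle is the middle step: ruling out length-one top coefficients $c(u)u^{(d-N)}$ with $N\neq d-1$, and pinning down $N=d-1$. I would try the condition $\mathcal P_{ij}f=0$ with $j=N$ and $i=d-N$, which should produce a nonzero multiple of $c$ unless $d-N=1$. For $d=2$, where $N=1$, I would check directly that $\tfrac12 g u_x^2$ is forced.
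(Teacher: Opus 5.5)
\paragraph{Verdict.} Your overall route is the paper's route, and your $BH^0$ and $BH^1_2$ arguments are fine. The route is: write $\omega=\frac{\partial f}{\partial\tau_0}$; from the $\xi_0\xi_N$ coefficient get $(E-1)\frac{\partial f}{\partial u^{(N)}}=0$, so $\frac{\partial f}{\partial u^{(N)}}=c(u)\,u^{(d-N)}$ with $1\le d-N\le N$; then show $N=d-1$, subtract $\Psi_d[g]$, and induct on $N$.

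\paragraph{The gap.} It sits in the one step that carries content, which you flagged yourself, and the index you propose there does not work. For $1\le i<N$ and $f\in\hm G^{(N)}$, only the $s=N$ term of $\mathcal D_N$ survives, so
\[
\mathcal P_{iN}f=\mathcal D_i\bigl(c\,u^{(d-N)}\bigr)-u\,\frac{\partial}{\partial u^{(i)}}\bigl(c\,u^{(d-N)}\bigr)=\binom{d-N}{i}c\,u^{(d-N-i)}-u\,c\,\delta_{i,d-N}.
\]
At your choice $i=d-N$ both terms equal $uc$ and cancel identically. So the condition $\mathcal P_{d-N,N}f=0$ holds for every $c$ and gives no information. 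When $d=2N$ the choice is not even admissible, since then $i=j=N$. Your downward induction after subtracting $\Psi_d[g]$ (``impossible at smaller $N$'') rests on this same step, so it is not justified as written either.

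\paragraph{The fix.} Take $i=1$, as the paper does. If $d-N\ge 2$, then $N\ge d-N\ge 2$, so the pair $(1,N)$ is admissible. The formula then gives $\mathcal P_{1N}f=(d-N)\,c\,u^{(d-N-1)}$ with $d-N-1\ge 1$, which forces $c=0$, i.e.\ $f\in\hm G^{(N-1)}$.

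Hence a nonzero top coefficient can occur only when $d-N=1$:
\begin{itemize}
\item for $d\ge 3$, this means $f=c\,u_xu^{(d-1)}+\hm G^{(d-2)}$;
\item for $d=2$, it means $f=\tfrac{c}{2}u_x^2=\Psi_2[c]$.
\end{itemize}
The rest of your argument then goes through. Subtract $\Psi_d[c]$, which is a cocycle with the same top term by Lemma \ref{CE}. Then repeat the $i=1$ argument on the difference at each $N\le d-2$. When $d-N>N$ the derivative $\frac{\partial}{\partial u^{(N)}}$ vanishes automatically, and at the bottom a function of $u$ alone of degree $d\ge 2$ is zero.
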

\begin{proof}
    The overall strategy is similar to that of the proof of Lemma \ref{CI} and in particular the first assertion is proved in the same way. Let us prove the second assertion. Take a cocycle $\qo\in \hm G^1_{d+1}$ with 
    \[
    \diff{\qo}{\qt_0} = \diff{\qo}{\qt_1}-\xi_0\qo = 0.
    \]
    Using the triviality of Hamiltonian cohomology,  we can write $\qo = \diff{f}{\qt_0}$ for some $f\in \hm G^0_{d}$ satisfying
    \[
    A:=\kk{\diff{}{\qt_1}-\xi_0}\diff{f}{\qt_0} = 0.
    \]
    Assume that $f\in\hm G^{(N)}$ for some $N\geq 1$, then 
    \[
    A_{0,N} = \mathcal P_{0N}(f)-\diff{f}{u^{(N)}} = \sum_{j\geq 1}u^{(j)}\diff{}{u^{(j)}}\kk{\diff{f}{u^{(N)}}} - \diff{f}{u^{(N)}} = 0,
    \]
    from which it follows that $\diff{f}{u^{(N)}}$ is of length 1, and hence we conclude that 
    \[
    f = g(u)u^{(N)}u^{(d-N)}+\hm G^{(N-1)}
    \]
    for some $g(u)$. Note that in particular we must have $0<d-N\leq N$. Therefore, if $d+1 = 2$, it is necessary that $f = 0$, hence we have 
    \[BH^1_2\left(\hm G,\diff{}{\qt_0},\diff{}{\qt_1}-\xi_0\right) = 0.\]
    
    In what follows we assume $d+1\geq 3$.
    For $1\leq i< N$, we arrive at 
    \begin{align*}
    A_{i,N} = \mathcal P_{iN}(f) &= \sum_{s\geq i}\binom{s}{i}u^{(s-i)}\diff{}{u^{(s)}}\diff{f}{u^{(N)}}-u\diff{}{u^{(N)}}\diff{f}{u^{(i)}}\\
     &= \binom{d-N}{i}u^{(d-N-i)}g(u)-ug(u)\qd_{i,d-N}.
    \end{align*}
    If $d-N\neq 1$, we must have $N\geq 2$, so we can take $i = 1$ in the identity above to obtain $g(u) = 0$. Therefore, we conclude that $g(u)$ is possibly nonzero only if $N = d-1$. In this case, consider 
    \[
\tilde f = 
\begin{cases}
    f-\Psi_2[2g] & d=2\\
    f-\Psi_{d}[g] & d\geq 3.
\end{cases}
\]
Then we see that $\tilde \qo = \diff{\tilde f}{\qt_0}$ is again a cocycle due to Lemma \ref{CE} and $\tilde f\in\hm G^{(d-2)}$. Then the argument above applied to $\tilde \qo$ implies that $\tilde \qo = 0$. The lemma is proved.
\end{proof}
\begin{Lem}
    \label{CJ}
    Let $f\in\hm G$ be a monomial of length $\ell$. Then all monomials in the coboundary 
    \[
    B:=\kk{\diff{}{\qt_1}-\xi_0}\diff{f}{\qt_0}
    \]
    have length at least $\ell-1$.
\end{Lem}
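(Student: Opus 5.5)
The plan is to expand $B$ explicitly in terms of the operators $\mathcal P_{ij}$ and $\mathcal D_i$ introduced above, and then track how each operator changes the length of a monomial. Recall that on $\hm G$ we have $\diff{}{\qt_0}=\sum_{j\geq0}\xi_j\diff{}{u^{(j)}}$ and $\diff{}{\qt_1}=\sum_{i\geq 0}\xi_i\mathcal D_i$. Since $f\in\hm G^0$ (no $\xi$'s), we get
\[
B=\sum_{0\leq i<j}\xi_i\xi_j\mathcal P_{ij}(f)-\sum_{j\geq 0}\xi_0\xi_j\diff{f}{u^{(j)}},
\]
so $B_{0,j}=\mathcal P_{0j}(f)-\diff{f}{u^{(j)}}$ and $B_{i,j}=\mathcal P_{ij}(f)$ for $1\leq i<j$. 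The $\xi$'s do not contribute to length (length counts only the factors $u^{(k)}$, $k\geq1$). It therefore suffices to show that each coefficient $B_{i,j}\in\hm A^0$ consists of monomials of length at least $\ell-1$.

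Write $f=h(u)u^{(k_1)}\cdots u^{(k_\ell)}$ and record the effect of the elementary operators on length.
\begin{itemize}
\item $\diff{}{u^{(j)}}$ with $j\geq1$ lowers the length by exactly one.
\item $\diff{}{u}$ preserves length, since it only differentiates $h$.
\item $\mathcal D_0=\sum_{s\geq0}u^{(s)}\diff{}{u^{(s)}}$ preserves length.
\item $\mathcal D_i$ with $i\geq1$ equals $\sum_{s\geq i}\binom{s}{i}u^{(s-i)}\diff{}{u^{(s)}}$. The term $s=i$ contributes $u\,\diff{}{u^{(i)}}$, which lowers the length by one. The terms $s>i$ replace one $u^{(s)}$ by $u^{(s-i)}$ with $s-i\geq1$, preserving length.
\end{itemize}

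Next I bound the length of each composite. Consider $\mathcal P_{ij}=\mathcal D_i\diff{}{u^{(j)}}-\mathcal D_j\diff{}{u^{(i)}}$ with $1\leq i<j$. Each term is a first-order derivative followed by some $\mathcal D$, so it could a priori lower the length by two, giving only $\ell-2$. This is the main obstacle: the length-$(\ell-2)$ parts must cancel. Those parts come only from $u\diff{}{u^{(i)}}\diff{f}{u^{(j)}}$ in the first term and $u\diff{}{u^{(j)}}\diff{f}{u^{(i)}}$ in the second. These are equal, because partial derivatives commute (the coefficient $\binom{i}{i}=\binom{j}{j}=1$). Hence they cancel, and $\mathcal P_{ij}(f)$ has length at least $\ell-1$.

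For $i=0$ we have $\mathcal P_{0j}=\mathcal D_0\diff{}{u^{(j)}}-\mathcal D_j\diff{}{u}$. The first term has length $\geq\ell-1$ since $\mathcal D_0$ preserves length. The second also has length $\geq\ell-1$, because $\diff{}{u}$ preserves length and $\mathcal D_j$ lowers it by at most one. Finally, the extra term $\diff{f}{u^{(j)}}$ has length $\ell-1$ when $j\geq1$, and length $\ell$ when $j=0$.

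Combining these cases, all coefficients of $B$ have length at least $\ell-1$, which proves the lemma. The only delicate point is the cancellation in $\mathcal P_{ij}$ for $i,j\geq1$. I would check it carefully, including signs and the super-ordering of $\xi_i\xi_j$, by writing $\diff{}{\qt_1}\diff{}{\qt_0}f=\sum_{i,j}\xi_i\xi_j\mathcal D_i\diff{f}{u^{(j)}}$ and antisymmetrizing; this reproduces exactly the definition of $\mathcal P_{ij}$ given earlier.
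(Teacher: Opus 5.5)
Your proof is correct and is essentially the paper's argument. The paper also starts from the a priori bound $\ell-2$ and observes that the only length-$(\ell-2)$ contribution is $\sum_{i,j\geq 1}u\,\xi_i\xi_j\,\frac{\partial^2 f}{\partial u^{(i)}\partial u^{(j)}}$, which vanishes because $\xi_i\xi_j$ is antisymmetric while second derivatives are symmetric; this is exactly your cancellation inside $\mathcal P_{ij}$ for $1\leq i<j$, written before rather than after antisymmetrizing (and your aside about $j=0$ in the extra term is moot, since $\xi_0\xi_0=0$).
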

\begin{proof}
    First, it is easy to see that all monomials in $B$ have length at least $\ell-2$. However, the monomials that have length $\ell-2$ are given by 
    \[
    \sum_{i\geq 1}u\xi_i\diff{}{u^{(i)}}\kk{\sum_{j\geq 1}\xi_j\diff{f}{u^{(j)}}} = \sum_{i,j\geq 1}u\xi_i\xi_j\frac{\qp^2 f}{\qp u^{(i)}\qp u^{(j)}}=0.
    \]  
    The lemma is proved.
\end{proof}
\begin{Lem}
    \label{CK}
    Let $\qo\in\hm G^2_d$ for $d\geq 2$, and assume that $\qo$ contains neither length = 0 monomials nor length = 1 monomials. If 
    \[
    \diff{\qo}{\qt_0} = \diff{\qo}{\qt_1}-\xi_0\qo = 0,
    \] 
    then $[\qo]$ defines a trivial cohomology class in 
    \[BH^2_d\left(\hm G,\diff{}{\qt_0},\diff{}{\qt_1}-\xi_0\right).\]
\end{Lem}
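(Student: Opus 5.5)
The plan is to mimic the inductive elimination arguments used in Lemmas \ref{CO} and \ref{CM}, and to use Lemma \ref{CJ} to keep control of lengths. We write $\qo=\sum_{i<j}\qo_{i,j}\xi_i\xi_j$ and assume $\qo\in\hm G^{(N)}$ with $N$ minimal. Since $\qo$ is $\frac{\qp}{\qp\qt_0}$-closed, the triviality of Hamiltonian cohomology (as in the proof of Lemma \ref{CI}) gives $\qo=\frac{\qp f}{\qp\qt_0}$ for some $f\in\hm G^1_{d-1}$. The cocycle condition then reads
\[
A:=\kk{\diff{}{\qt_1}-\xi_0}\diff{f}{\qt_0}=0 .
\]
We can also arrange that $f$ contains no monomials of length $\leq 1$. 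Indeed, $\frac{\qp}{\qp\qt_0}$ lowers length by at most one, and the length-$0$ and length-$1$ parts of $\qo$ vanish, so low-length pieces of $f$ can be removed by adding $\frac{\qp}{\qp\qt_0}$-exact terms. This step uses the length filtration and the homogeneity of $\frac{\qp}{\qp\qt_0}$.

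Next we work with the top variable. Write $f=\sum_i f_i\xi_i$ and look at the components of $A$ involving $\xi_0$ and the highest $\xi_N$, or $u^{(N)}$, exactly as in Lemma \ref{CM}. The component $A_{0,N}$ should give an equation of the form
\[
\kk{\sum_{j\geq1}u^{(j)}\diff{}{u^{(j)}}-c}\diff{f_\bullet}{u^{(N)}}=(\text{lower terms}),
\]
where $c$ is a small integer (namely $1$, coming from the $-\xi_0$ shift). The Euler operator on length then forces $\diff{f}{u^{(N)}}$ to consist only of terms of length $c$. Since $f$ has no monomials of length $\leq 1$, this top part is either zero or explicitly of length $2$. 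In the latter case it has the shape $g\,u^{(N)}u^{(d-1-N)}\xi_k$ or similar, and we then use the components $A_{i,N}$ with $1\leq i<N$, as in Lemma \ref{CM}, to kill $g$ except in a borderline case. In the borderline case we subtract an explicit coboundary $\kk{\diff{}{\qt_1}-\xi_0}\diff{}{\qt_0}(h)$ with $h\in\hm G^1$ built from the $\Psi_d[g]$ of Lemma \ref{CE}, for instance $h=\Psi_m[g]\,\xi_k$ or $\qp_x$-shifts of it. Lemma \ref{CJ} guarantees that this subtraction does not reintroduce monomials of length $\leq 1$ into $\qo$, so the hypothesis is preserved.

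After these steps $f\in\hm G^{(N-1)}$ (and the dependence on $\xi_N$ is removed as in Lemma \ref{CO}, via the $\frac{\qp}{\qp\qt_0}$-component with index $N$). Induction on $N$ then terminates with $f$ of bounded order, and finally gives $\qo=0$ modulo coboundaries.

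The main obstacle is the borderline case. There we must verify that the surviving length-$2$ top term is genuinely of the form produced by Lemma \ref{CE}, so that it can be cancelled by a coboundary $\frac{\qp}{\qp\qt_1}\frac{\qp}{\qp\qt_0}(h)-\xi_0\frac{\qp h}{\qp\qt_0}$. I would first try $h=\Psi_{N+1}[g]\,\xi_k$ and compute its leading term using the generating-function identities for $\mathcal P_{ij}\Phi(z)$ in the proof of Lemma \ref{CE}.
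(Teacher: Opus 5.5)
\textbf{There is a genuine gap.} First, the degrees in your plan do not match the complex. The coboundaries in $BH^2_d\bigl(\hm G,\diff{}{\qt_0},\diff{}{\qt_1}-\xi_0\bigr)$ are $\kk{\diff{}{\qt_1}-\xi_0}\diff{h}{\qt_0}$ with $h\in\hm G^0_{d-2}$. With $h\in\hm G^1$, as you propose, this element lies in $\hm G^3$. Likewise, once $\qo=\diff{f}{\qt_0}$ with $f\in\hm G^1$, your $A$ lies in $\hm G^3$. So ``$A_{0,N}$'' and the Euler-operator equation with unspecified ``lower terms'' borrowed from Lemma \ref{CM} are not what the cocycle condition actually gives. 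On the primitive, the correct gauge freedom is $f\mapsto f+\kk{\diff{}{\qt_1}-\xi_0}h+\diff{k}{\qt_0}$ with $h,k\in\hm G^0$, and your plan never uses it.

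Second, and more seriously, the borderline step cannot work. For $h=\Psi_m[g]\xi_k$ one has $\kk{\diff{}{\qt_1}-\xi_0}\diff{h}{\qt_0}=\bigl[\kk{\diff{}{\qt_1}-\xi_0}\diff{\Psi_m[g]}{\qt_0}\bigr]\xi_k=0$ by Lemma \ref{CE}, because $\xi_k$ is annihilated by both odd flows. So this ``coboundary'' cancels nothing. If you instead subtract $\Psi_m[g]\xi_k$ from $f$, you change $\qo$ by the cocycle $\diff{\Psi_m[g]}{\qt_0}\xi_k$. That is one of the generators listed in Lemma \ref{CP} and is in general not a coboundary, so the class of $\qo$ is not preserved. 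Lemma \ref{CJ} does not rescue this either: applied to something built from $\Psi_m[g]$, whose leading part has length $2$, it only gives length $\geq 1$, not $\geq 2$. The $\Psi$-type terms are exactly the ones carrying length-one monomials, so under the hypothesis of the lemma there is no borderline case at all.

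What is missing is the step where the hypothesis is actually used. Work with $\qo\in\hm G^{(N)}$ directly; no primitive $f$ and no $\Psi$ are needed.
\begin{enumerate}
\item For $h\in\hm G^0\cap\hm G^{(N)}$, the coboundary $B=\kk{\diff{}{\qt_1}-\xi_0}\diff{h}{\qt_0}$ has $B_{0,N}=(E-1)\diff{h}{u^{(N)}}$, where $E=\sum_{j\geq1}u^{(j)}\diff{}{u^{(j)}}$ counts length.
\item Since $\qo_{0,N}$ has only monomials of length $\geq 2$, the operator $E-1$ is invertible on it. So you can choose $h$ of length $\geq 3$ with $B_{0,N}=\qo_{0,N}$.
\item Lemma \ref{CJ} then shows that $\qo-B$ still has no monomials of length $\leq 1$, and now $(\qo-B)_{0,N}=0$.
\item For $0<i<N$, the $(0,i,N)$-components of the two cocycle equations read $\diff{\qo_{i,N}}{u}+\diff{\qo_{0,i}}{u^{(N)}}=0$ and $(\mathcal D_0-1)\qo_{i,N}+u\diff{\qo_{0,i}}{u^{(N)}}=0$. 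They combine to $(E-1)\qo_{i,N}=0$, so $\qo_{i,N}$ would be pure length one, hence zero.
\item Finally, $\kk{\diff{\qo}{\qt_0}}_{i,j,N}=\diff{\qo_{i,j}}{u^{(N)}}=0$ for $i<j<N$ gives $\qo\in\hm G^{(N-1)}$, and induction on $N$ finishes.
\end{enumerate}
This is the paper's argument.
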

\begin{proof}
    Assume that $\qo\in\hm G^{(N)}$ for some $N\geq 1$. If we take arbitrary 
    \[f\in\hm G^0\cap\hm G^{(N)},\]
    it follows that the coboundary 
    \[
B =\kk{\diff{}{\qt_1}-\xi_0}\diff{f}{\qt_0}
    \]
    generated by $f$ satisfies
    \[
    B_{0,N} = \sum_{j\geq 1}u^{(j)}\diff{}{u^{(j)}}\kk{\diff{f}{u^{(N)}}} - \diff{f}{u^{(N)}}.
    \]
    Since all monomials of $\qo_{0,N}$ have length at least two, it follows that we can choose suitable $f$ whose monomials have length at least 3, such that 
    \[
    \qo_{0,N} = B_{0,N}.
    \]
    Note that all monomials in $B$ have length at least 2 due to Lemma \ref{CJ}, hence by replacing  $\qo$ with $\qo-B$, we can assume that $\qo_{0,N} = 0$.

    Now fix $0<i<N$, it follows that 
    \begin{align*}
        \kk{\diff{\qo}{\qt_0}}_{0,i,N} &= \diff{\qo_{i,N}}{u}+\diff{\qo_{0,i}}{u^{(N)}} = 0,\\
        \kk{\diff{\qo}{\qt_1}-\xi_0\qo}_{0,i,N} &= (\mathcal D_0-1)\qo_{i,N}+\mathcal D_N \qo_{0,i} = 0.
    \end{align*}
    By our assumption that $\qo\in\hm G^{(N)}$, we know that 
    \[
    \mathcal D_N\qo_{0,i} = u\diff{\qo_{0,i}}{u^{(N)}},
    \]
    hence it follows that 
    \[
    \sum_{j\geq 1}u^{(j)}\diff{\qo_{i,N}}{u^{(j)}}- \qo_{i,N} = 0.
    \]
    This shows that all monomials in $\qo_{i,N}$ have length 1, so we conclude that $\qo_{i,N} = 0$ for all $0\leq i<N$.

    Finally, fix any $i<j<N$, we have 
    \[
    \kk{\diff{\qo}{\qt_0}}_{i,j,N} = \diff{\qo_{i,j}}{u^{(N)}} = 0,
    \]
    hence actually $\qo\in\hm G^{(N-1)}$. Then by induction, we see $[\qo] = 0$ as a cohomology class. The lemma is proved.
\end{proof}
\begin{Lem}
    \label{CP}
    The cohomology group 
    \[
    BH^2\left(\hm G,\diff{}{\qt_0},\diff{}{\qt_1}-\xi_0\right)
    \]
    is the linear space spanned by cocycles
    \[
    \xi_i\diff{\Psi_j[g]}{\qt_0},\quad 0\leq i\leq j,\quad i\neq 1,\quad g\in C^\infty(u),
    \]
    where $\Psi_j[g]$ are as defined in \eqref{CH}.
\end{Lem}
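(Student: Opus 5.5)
The plan has three parts: check that the listed elements are cocycles, reduce an arbitrary cocycle to the low-length situation of Lemma \ref{CK}, and then account for the surviving classes and their independence. Write $\delta_0=\diff{}{\qt_0}$ and $\delta_1=\diff{}{\qt_1}-\xi_0$ on $\hm G$.

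\emph{Cocycles.} For any $\Psi_j[g]$, Lemma \ref{CE} gives $\delta_1\delta_0\Psi_j[g]=0$, and trivially $\delta_0\delta_0=0$. Since $\delta_0\xi_i=\delta_1\xi_i=0$ and $\xi_i$ is odd, the product $\xi_i\,\delta_0\Psi_j[g]$ is annihilated by both differentials. So every listed element is a cocycle.

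\emph{Reduction to Lemma \ref{CK}.} Take a cocycle $\qo\in\hm G^2_d$ with $d\ge 2$, and split it by length into parts of length $0$, $1$ and $\ge 2$.
\begin{enumerate}
\item The length-$0$ part is $\sum_{i<j}c_{ij}(u)\xi_i\xi_j$. By \eqref{CB} and \eqref{CC}, the lowest-length components of the cocycle equations for $\delta_0,\delta_1$ form a finite-dimensional system, in the spirit of the map $\pi$ in Lemma \ref{CO}. I expect it to force $c_{ij}$ to be determined by data attached to $\xi_i\,\delta_0\Psi_j[g]$, which has leading term $\xi_i\,\partial_{u^{(j-1)}}(\cdot)\,\xi_{j-1}$ shifted appropriately.
\item I would therefore subtract suitable multiples of $\xi_i\,\delta_0\Psi_j[g]$ to kill first the length-$0$ part and then the length-$1$ part. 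Lemma \ref{CE} identifies the leading term $g\,u_x u^{(j-1)}$ of $\Psi_j[g]$, so $\delta_0\Psi_j[g]$ has leading pieces $g\,u_x\xi_{j-1}+g\,u^{(j-1)}\xi_0$. These are used to absorb the offending length-$0$ and length-$1$ monomials; the smallest $j$, namely $\Psi_2[g]=\tfrac12 g u_x^2$, handles the extremal coefficients.
\item Lemma \ref{CJ} guarantees that the coboundaries used never create lower-length terms. Once $\qo$ has no length-$0$ or length-$1$ monomials, Lemma \ref{CK} shows it is exact.
\end{enumerate}
This proves that the listed elements span the cohomology.

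\emph{Exclusion of $i=1$ and linear independence.} Consider the projection of a class to its length-$\le 1$ part modulo the images of coboundaries. Lemma \ref{CJ} says a coboundary $\delta_1\delta_0 f$, with $f$ of length $\ell$, only contains lengths $\ge \ell-1$. So only $f$ of length $\le 2$ can touch this part, and those are explicit: $h(u)u^{(a)}$ and $h(u)u^{(a)}u^{(b)}$. Computing $\delta_1\delta_0$ on them with the formula for $\mathcal P_{ij}$ should show two things. First, $\xi_1\,\delta_0\Psi_j[g]$ is cohomologous to a combination of the others; I expect the relevant identity to come from $\delta_1\delta_0$ applied to $h(u)u_x u^{(j-1)}$, which produces the $\xi_1$ terms because $\mathcal D_1$ shifts derivatives by one. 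Second, the remaining elements have independent images, which gives linear independence.

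The main obstacle is the bookkeeping in the low-length analysis: tracking exactly which length-$0/1$ cocycle configurations occur, and checking that the $i=1$ family is redundant while $i=0$ and $i\ge 2$ are not. I would first test this carefully at small $d$, say $d=3$ and $d=4$, then do the general pattern by induction on $N$, where $\qo\in\hm G^{(N)}$, mirroring the proofs of Lemmas \ref{CM} and \ref{CK}.
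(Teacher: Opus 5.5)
\paragraph{The spanning step is missing.} Parts of your plan are fine. The listed elements are cocycles. There is a small slip: $\partial_{\tau_1}-\xi_0$ is not a derivation and does not kill $\xi_i$; what you need is $(\partial_{\tau_1}-\xi_0)(\xi_i\eta)=-\xi_i(\partial_{\tau_1}-\xi_0)\eta$. Lemma \ref{CK} does show that a class is determined by the length-$\le 1$ part of a representative, modulo the length-$\le1$ parts of coboundaries.

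But the step that carries all the content is only announced: that the length-$\le1$ part of an \emph{arbitrary} cocycle is cancelled by listed cocycles plus coboundaries. The mechanism you sketch for it is wrong in three places:
\begin{enumerate}
\item The elements $\xi_i\,\partial_{\tau_0}\Psi_j[g]$ have no length-$0$ monomials, since $\Psi_j[g]$ is a combination of the $B_{j,r+2}$. So they cannot remove the length-$0$ part.
\item What actually handles length $0$ is this. Write $\partial_{\tau_1}-\xi_0=u\,\partial_{\tau_0}+\Delta$, where $\Delta=\xi_0(E-1)+\sum_{i\ge1}\xi_i\bar{\mathcal D}_i$ preserves length and $E$ counts length. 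The cocycle condition then forces $\xi_0\omega^{(0)}=0$, i.e.\ $\omega^{(0)}=c(u)\xi_0\xi_{d-2}$. This is removed by the coboundary of the length-one element $c(u)u^{(d-2)}$. Coboundaries that lower length are therefore essential, contrary to your reading of Lemma \ref{CJ}.
\item The length-one part of $\partial_{\tau_0}\Psi_j[g]$ is $\frac{g}{j}\sum_{k=1}^{j-1}\binom{j}{k}u^{(j-k)}\xi_k$, with no $\xi_0$ term; your $gu^{(j-1)}\xi_0$ should be $gu^{(j-1)}\xi_1$.
\end{enumerate}
The last point matters. Once $\omega^{(0)}=0$, the $\xi_0$-part of $\omega^{(1)}$ can no longer be changed by coboundaries, so it must be matched by the $i=0$ family alone. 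The $\xi_0$-free part must be analysed modulo the length-one parts $\sum_{i,k\ge1}\xi_i\big(\bar{\mathcal D}_i\,\partial f/\partial u^{(k)}\big)\xi_k$ of coboundaries of length-two $f$. None of this linear algebra is carried out, and small-$d$ tests do not replace it.

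\paragraph{How the paper closes it, and a false side claim.} The paper supplies exactly this analysis, organised by downward induction on the top index $N$ rather than by length:
\begin{itemize}
\item The $(0,i,N)$ components of the two cocycle equations give $(E-1)\omega_{i,N}=\bar{\mathcal D}_i\omega_{0,N}$. After normalising $\omega_{0,N}$ to length one by a coboundary, this forces $\omega_{0,N}=a_1(u)u_x$ and makes every $\omega_{i,N}$ linear.
\item The $(i,j,N)$ components give $\bar{\mathcal D}_i\omega_{j,N}=\bar{\mathcal D}_j\omega_{i,N}$. With $\bar{\mathcal D}_i=T^i/i!$ this pins down $\omega_{j,N}=\frac{T^{j-1}}{j!}\omega_{1,N}+b_j(u)u_x$.
\item Coboundaries with $\partial f/\partial u^{(N)}$ linear shift $\omega_{1,N}$ by $\operatorname{im}T$; this is where $i=1$ drops out.
\item The auxiliary cocycles $C_N[b]$ realise the surviving $b_1(u)u^{(N)}$. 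Then $\partial\omega_{i,j}/\partial u^{(N)}=0$ lowers $N$.
\end{itemize}
You need either this argument or a complete solution of the length-one system.

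Separately, do not try to prove linear independence of the listed family. With $\Psi_j$ as in \eqref{CH} it is false; for instance
\[
\xi_2\,\partial_{\tau_0}\Psi_2[g]=(\partial_{\tau_1}-\xi_0)\,\partial_{\tau_0}\Big(-\tfrac{g}{4}u_{xx}^2-\tfrac{g'}{2}u_x^2u_{xx}-\tfrac{g''}{8}u_x^4\Big)
\]
is exact, even though $i=j=2$ is allowed in the list. The lemma only asserts spanning.
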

\begin{proof}
    Fix a cocycle $\qo\in\hm G^2$. Assume that $\qo\in\hm G^{(N)}$, then similar to the proof of Lemma \ref{CK}, we may require that all the monomials contained in $\qo_{0,N}$ has length 1. Then for $0<i<N$, we have 
    \begin{align*}
        \kk{\diff{\qo}{\qt_0}}_{0,i,N} &= \diff{\qo_{i,N}}{u}-\diff{\qo_{0,N}}{u^{(i)}}+\diff{\qo_{0,i}}{u^{(N)}} = 0,\\
        \kk{\diff{\qo}{\qt_1}-\xi_0\qo}_{0,i,N} &= (\mathcal D_0-1)\qo_{i,N}-\mathcal{D}_i  \qo_{0,N}+\mathcal D_N \qo_{0,i} = 0.
    \end{align*}
    Let us denote by 
    \[
    \bar{\mathcal D_i} = \mathcal D_i-u\diff{}{u^{(i)}} = \sum_{n\geq i+1}\binom{n}{i}u^{(n)}\diff{}{u^{n+i}},\quad i\geq 1
    \]
    then it follows that 
    \begin{equation}
        \label{CL}
    \sum_{i\geq 1}u^{(i)}\diff{\qo_{i,N}}{u^{(i)}}-\qo_{i,N} = \bar{\mathcal D_i}\qo_{0,N}.
    \end{equation}
    Since all monomials of $\bar{\mathcal D_i}\qo_{0,N}$ have length 1, we conclude that monomials in $\qo_{i,N}$ also have length 1. But this implies that 
    \[
    \bar{\mathcal D_i}\qo_{0,N} = 0,\quad 0<i<N.
    \]
    If we write 
    \[
    \qo_{0,N} = \sum_{i\geq 1}a_i(u)u^{(i)},
    \]
    then the identity above implies that 
    \[
    \sum_{k\geq 1}\binom{k+i}{i}u^{(k)}a_{i+k}(u) = 0,\quad 0
    <i<N,
    \]
    from which we arrive at 
    \[
    \qo_{0,N} = a_1(u)u_x.
    \]

    Let us further analyze how adding coboundaries changes $\qo_{i,N}$. For $f\in\hm G^0\cap\hm G^{(N)}$, let 
        \[
B =\kk{\diff{}{\qt_1}-\xi_0}\diff{f}{\qt_0}
    \]
    be the coboundary it generates. We want to preserve the length of $\qo_{0,N}$, then it is necessary that 
    \[
    \sum_{j\geq 1}u^{(j)}\diff{}{u^{(j)}}\kk{\diff{f}{u^{(N)}}} - \diff{f}{u^{(N)}} = 0.
    \]
    For such a differential polynomial $f$, it is easy to compute that 
    \[
    B_{i,N} = \bar{\mathcal D}_i \diff{f}{u^{(N)}},\quad i\geq 1.
    \]
    Note that all monomials in $B_{i,N}$ have length 1. Define the $C^\infty(u)$ vector space $\mathcal V^{(N)}$ spanned by $u^{(i)}$ for $1
    \leq i\leq N$, and a linear morphism $T\in\mathrm{End}(\mathcal V^{(N)})$ by 
    \[
    T\left(u^{(r)}\right) = \begin{cases}ru^{(r-1)}, & r\geq 2,\\
        0, & r = 1.\end{cases}
    \]
    Then it is easy to check that
    \[
    \bar{\mathcal D}_i = \frac{T^i}{i!}
    \]
    as linear morphisms on $\mathcal V^{(N)}$. It is easy to see that, by adding suitable coboundaries, we can  require 
    \[
    \qo_{1,N} = b_1(u)u^{(N)}.
    \]
    Now for any $0<i<j<N$, we have 
    \begin{align*}
        \kk{\diff{\qo}{\qt_0}}_{i,j,N} &= \diff{\qo_{j,N}}{u^{(i)}}-\diff{\qo_{i,N}}{u^{(j)}}+\diff{\qo_{i,j}}{u^{(N)}} = 0,\\
        \kk{\diff{\qo}{\qt_1}-\xi_0\qo}_{i,j,N} &= \mathcal D_i\qo_{j,N}-\mathcal{D}_j  \qo_{i,N}+\mathcal D_N \qo_{i,j} = 0,
    \end{align*}
    from which we conclude that 
    \[
    \bar{\mathcal D}_i \qo_{j,N}  = \bar{\mathcal D}_j \qo_{i,N},\quad 0<i<j<N.
    \]
    By taking $i = 1$, we see that 
    \[
    T\qo_{j,N} = \frac{T^j}{j!}\qo_{1,M}.
    \]
    Hence, we must have 
    \[
    \qo_{j,N} = \frac{T^{j-1}}{j!}\qo_{1,M}+b_j(u)u_x,\quad 1<j<N,
    \]
    for some smooth functions $b_j(u)$, since 
     $\ker T = C^\infty(u)u_x$.
     To summarize the discussions so far, we conclude that given a cocycle $\qo\in\hm G^{(N)}$, there uniquely exist functions $a_1(u),b_1(u),\dots, b_N(u)$ such that after adding to $\qo$ suitable coboundaries,
    \[
     \qo = a_1(u)\xi_0\xi_N+b_1(u)u^{(N)}\xi_1\xi_N+\sum_{j=2}^{N-1}\frac{T^{j-1}}{j!}\kk{b_1(u)u^{(N)}}\xi_j\xi_N+\sum_{j=2}^{N-1} b_j(u)u_x \xi_j\xi_N+\hm G^{(N-1)}.
     \] 

     Now for a given smooth function $b(u)$, let us define a differential polynomial $H_N[b]\in\hm G^0\cap \hm G^{(N)}$ such that 
     \[
     \sum_{i\geq 1}u^{(i)}\diff{}{u^{(i)}}\diff{H_N[b]}{u^{(N)}}-\diff{H_N[b]}{u^{(N)}} = -\diff{\Psi_N[b]}{u}.
     \]
     Note that all monomials in $\diff{H_N[b]}{u^{(N)}}$ have at least length 2.
     Denote the cocycle
     \[
     C_N[b] = \diff{\Psi_N[b]}{\qt_0}\xi_N+\kk{\diff{}{\qt_1}-\xi_0}\diff{H_N[b]}{\qt_0},
     \]
     then it follows from a direct computation that 
     \begin{align*}
     C_N[b]_{0,N} &= \diff{\Psi_N[b]}{u}+\sum_{i\geq 1}u^{(i)}\diff{}{u^{(i)}}\diff{H_N[b]}{u^{(N)}}-\diff{H_N[b]}{u^{(N)}} = 0,\\
     C_N[b]_{1,N}&= \diff{\Psi_N[b]}{u_x}+T\diff{H_N[b]}{u^{(N)}} = b(u)u^{(N)}+R,
     \end{align*}
     where all monomials in $R$ have length at least 2. However, $C_N[b]$ is a cocycle in $\hm G^{(N)}$ with $C_N[b]_{0,N} = 0$, hence it follows from \eqref{CL} that  all monomials in $C_N[b]_{1,N}$ must have length 1, and we arrive at $R = 0$. For a same reason, we conclude that 
     \[
     C_N[b]_{i,N} = \frac{T^{i-1}}{i!}\kk{b(u)u^{(N)}},\quad 1\leq i < N.
     \]
     Finally, we notice that 
     \[
     \xi_i\diff{\Psi_N[b]}{\qt_0} = b(u)u_x\xi_i\xi_N+\hm{G}^{(N-1)},
     \]
     therefore it is easy to see that 
     \[
     \qo-\xi_0\diff{\Psi_N[a_1]}{\qt_0}-C_N[b_1]-\sum_{j=2}^{N-1} \xi_i\diff{\Psi_N[b_j]}{\qt_0} \in\hm G^{(N-1)}.
     \]
     The lemma hence follows from induction.
\end{proof}

We are ready to compute $\cok \rho^1_*$ and $\ker\rho^2_*$. 
\begin{Lem}
    The map $\rho^1_*\colon H^1_{d-1}\kk{\hm A[\ql],D_4}\to H^1_{d}\kk{\hm A[\ql],D_1}$
    satisfies $\cok \rho^1_* = 0$ for all $d\geq 2$.
\end{Lem}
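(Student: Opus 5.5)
We show directly that every class of $H^1_d(\hm A[\ql],D_1)$ is represented by an element of the image of $\rho^1$ up to $D_1$-coboundaries. By Lemma \ref{BZ}, $H^1_d(\hm A[\ql],D_1)\cong BH^1_d\kk{\hm A,\diff{}{\qt_0},\diff{}{\qt_1}-\qth^1}$ for $d\geq 2$, and by the splitting \eqref{CD} this is
\[
BH^1_d\kk{\hm G,\diff{}{\qt_0},\diff{}{\qt_1}-\xi_0}\oplus\qth\, BH^0_d\kk{\hm G,\diff{}{\qt_0},\diff{}{\qt_1}-\xi_0}.
\]
The second summand vanishes by Lemma \ref{CM}. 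The first summand vanishes for $d=2$, and for $d\geq 3$ it is $C^\infty(u)$, spanned by the classes $\diff{\Psi_{d-1}[g]}{\qt_0}$. Since $\cok\rho^1_*$ is trivially zero at $d=2$, it suffices to show that for each $g\in C^\infty(u)$ and $d\geq3$ the class of $\diff{\Psi_{d-1}[g]}{\qt_0}$ lies in the image of $\rho^1_*$.

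To lift a class into $H^1(\hm A[\ql],D_1)$, we compose the isomorphisms of Lemma \ref{BZ}. The class $\diff{\Psi_{d-1}[g]}{\qt_0}$ corresponds to the element $\qo=\diff{\Psi_{d-1}[g]}{\qt_0}$ itself, regarded as a constant polynomial in $\ql$ in $\hm A[\ql]$. We will look for a $D_4$-cocycle $\eta\in\hm A^1_{d-1}[\ql]$ with
\[
\rho^1(\eta)=-(u-\ql)\qp_x\eta\equiv \qo \pmod{D_1(\hm A^0_{d}[\ql])}.
\]
The natural first attempt is to take $\eta=\diff{h}{\qt_0}$ with $h\in\hm G^0$, so that $\eta$ is $\diff{}{\qt_0}$-closed and also closed for $D_4=\diff{}{\qt_1}-\ql\diff{}{\qt_0}$ whenever $\diff{}{\qt_1}\diff{h}{\qt_0}=0$. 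By Lemma \ref{CI}, such classes are represented by $\qp_x^{d-2}(f\xi_0)=\diff{\qp_x^{d-2}F}{\qt_0}$ with $F'=f$. Then $(u-\ql)\qp_x\diff{\qp_x^{d-2}F}{\qt_0}$ has to be compared with $\diff{\Psi_{d-1}[g]}{\qt_0}$ modulo $D_1$-coboundaries.

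The comparison works in the $\ql$-picture. We write $(u-\ql)\qp_x\qp_x^{d-2}(f\xi_0)$ and use that $D_1(\qp_x^{d-1}F)$, together with $D_1$ applied to suitable $\ql$-linear corrections, absorbs the $\ql$-dependence. Setting $\ql=0$ after subtracting $D_1$-exact terms should leave a $\bigl(\diff{}{\qt_0},\diff{}{\qt_1}-\xi_0\bigr)$-cocycle. We then identify that cocycle's class by the invariant from the proof of Lemma \ref{CM}, namely the coefficient $g(u)$ of $u_xu^{(d-2)}$ in the top $\hm G^{(N)}$ component of the primitive $f$. The expected outcome is that $f$ maps to a nonzero multiple of $f'$ or $uf'$, up to terms in $f$. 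Because an ODE of this type can always be solved for $f$ given $g$, the map is surjective.

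The main obstacle is this explicit identification. The generic degree-$d$ primitive $\qp_x^{d-1}F$ is not of the normal form $\Psi_{d-1}[g]+\hm G^{(d-3)}$. We therefore first have to subtract a $D_1$-coboundary, built using Lemma \ref{CE} and the length filtration of Lemma \ref{CJ}, to bring it to $g(u)u_xu^{(d-2)}+\hm G^{(d-3)}$, and only then read off $g$. We first test the case $d=3$ by hand, where $\Psi_2[g]=\frac12 gu_x^2$, to fix the coefficient and confirm it is nondegenerate (for instance $g=uf'+cf$ with $c\neq0$). After that we argue for general $d$ via the top-coefficient $u_xu^{(d-2)}$ computation.
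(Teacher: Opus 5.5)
Your strategy is the same as the paper's. You evaluate $\rho^1_*$ on the generators $\qp_x^{d-2}(f\qth^1)$ of Lemma \ref{CI}, remove the $\ql$-dependence with a $D_1$-coboundary, and identify the resulting class through Lemma \ref{CM}. The preliminary reductions are correct: the splitting \eqref{CD}, the vanishing of the $\qth$-summand, and the case $d=2$. The gap is that the proposal stops exactly where the proof has to happen. You never write down the $\ql$-free representative, and you never compute the induced map $C^\infty(u)\to C^\infty(u)$. Instead you guess that it has the form $f\mapsto uf'+cf$ and appeal to solvability of an ODE. That guess is wrong. The appeal would not be a valid surjectivity argument anyway: for instance, $uf'-f=g$ has no smooth solution near $u=0$ unless $g'(0)=0$. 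So as written, surjectivity is asserted rather than proved.

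The missing step is a short computation. With $\bar f'=f$ one has $D_1(\qp_x^{d-1}\bar f)=\qp_x^{d-1}(uf\qth^1)-\ql\,\qp_x^{d-1}(f\qth^1)-\qth^1\qp_x^{d-1}\bar f$. Hence
\begin{align*}
\rho^1_*\bigl[\qp_x^{d-2}(f\qth^1)\bigr]&=\bigl[-(u-\ql)\qp_x^{d-1}(f\qth^1)+D_1(\qp_x^{d-1}\bar f)\bigr]\\
&=\bigl[-u\,\qp_x^{d-1}(f\qth^1)+\qp_x^{d-1}(uf\qth^1)-\qth^1\qp_x^{d-1}\bar f\bigr].
\end{align*}
This representative is $\ql$-free and $\qth$-independent, so it is a bi-cocycle for $\bigl(\diff{}{\qt_0},\diff{}{\qt_1}-\xi_0\bigr)$ in $\hm G^1_d$.

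You do not need to normalize a primitive using Lemmas \ref{CE} and \ref{CJ}. In super degree $1$ there are no coboundaries, since they would come from super degree $-1$. By Lemma \ref{CM}, for $d\geq 3$ every such cocycle equals $\diff{\Psi_{d-1}[g]}{\qt_0}$, whose $u_x\qth^{d-1}$-coefficient is exactly $g$. Reading off that coefficient in the representative above gives $-(d-1)uf'+(d-1)(uf)'=(d-1)f$. The last term contributes nothing, since it only involves $\qth^1$ and $d-1\geq 2$. The $uf'$ terms cancel, so $\rho^1_*$ is $f\mapsto (d-1)f$, which is an isomorphism for $d\geq 3$.
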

\begin{proof}
Let us consider the maps
\[
BH^1_{d-1}\left(\hm A,\diff{}{\qt_0},\diff{}{\qt_1}\right)\xrightarrow{\cong} H^1_{d-1}\kk{\hm A[\ql],D_4}\xrightarrow{\rho^1_*}H^1_d\kk{\hm A[\ql],D_1}\xrightarrow{\cong}BH^1_d\left(\hm A,\diff{}{\qt_0},\diff{}{\qt_1}-\qth^1\right),
\]
and we will still use $\rho^1_*$ to denote the induced map from the leftmost group to the rightmost group.
It follows from Lemma \ref{CM} that if $d=2$ we have $\cok \rho^1_* = 0$. In what follows we consider $d\geq 3$. Using Lemma \ref{CI}, take an arbitrary class 
\[
\qo = \qp_x^{d-2}(f(u)\qth^1)\in BH^1_{d-1}\left(\hm A,\diff{}{\qt_0},\diff{}{\qt_1}\right),
\]
then as cohomology classes in $H^1_d\kk{\hm A[\ql],D_1}$ we have  
\begin{align*}
\rho^1_*[\qo] &= -\left[(u-\ql)\qp_x^{d-1}(f(u)\qth^1)\right]\\
&=-\left[(u-\ql)\qp_x^{d-1}(f(u)\qth^1)\right]+\left[D_1(\qp_x^{d-1}\bar f(u))\right]\\
&=\left[-u\qp_x^{d-1}(f(u)\qth^1)+\qp_x^{d-1}(f(u)u\qth^1)-\qth^1 \qp_x^{d-1}\bar f(u)\right],
\end{align*}
here $\bar f(u)$ is a primitive function of $f(u)$.
Note that we are eliminating the $\ql$-dependence so that we get a cohomology class in 
\[
BH^1_d\left(\hm A,\diff{}{\qt_0},\diff{}{\qt_1}-\qth^1\right).
\]  
Using Lemma \ref{CM}, we know a class in this group is uniquely determined by its $u_x\qth^{d-1}$ coefficient, hence the induced morphism reads
\begin{align*}
    C^\infty(u) \xrightarrow{\cong}BH^1_{d-1}\left(\hm A,\diff{}{\qt_0},\diff{}{\qt_1}\right) &\to BH^1_d\left(\hm A,\diff{}{\qt_0},\diff{}{\qt_1}-\qth^1\right) \xrightarrow{\cong}
 C^\infty(u),\\[1em]
 f(u)&\mapsto (d-1)f(u).
\end{align*}
Therefore for $d\geq 3$, the map $\rho^1_*$ is an isomorphism, and the lemma follows.
\end{proof}
\begin{Lem}
    The map $\rho^2_*\colon H^2_{d}\kk{\hm A[\ql],D_4}\to H^2_{d+1}\kk{\hm A[\ql],D_1}$
    satisfies 
    \[
    \ker\rho^2_*\cong \begin{cases}
			0, & \text{if $d = 2,3,4$ or $d = 2n$ for $n\geq 3$,}\\
            C^\infty(u), & \text{otherwise.}
		 \end{cases}
    \]
\end{Lem}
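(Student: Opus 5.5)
Following the strategy of the previous lemma, I will transport $\rho^2_*$ through the isomorphisms of Lemma \ref{BZ} and use the decompositions \eqref{CA} and \eqref{CD}. This gives a map
\[
BH^2_d\left(\hm A,\diff{}{\qt_0},\diff{}{\qt_1}\right)\to BH^2_{d+1}\left(\hm A,\diff{}{\qt_0},\diff{}{\qt_1}-\qth^1\right).
\]
By \eqref{CA} the source splits as $BH^2_d(\hm G)\oplus\qth BH^1_d(\hm G)$. Lemma \ref{CO} and Lemma \ref{CI} give explicit representatives for both pieces:
\[
\qp_x^{i_1}(f\xi_0)\xi_{i_2}\ \text{with } i_1<i_2,\ i_1+i_2=d-2,\qquad \qth\,\qp_x^{d-1}(f\xi_0).
\]
On the target side, \eqref{CD} together with Lemma \ref{CP} and Lemma \ref{CM} gives the bases
\[
\xi_i\diff{\Psi_j[g]}{\qt_0}\ (i\neq 1),\qquad \qth\diff{\Psi_{d}[g]}{\qt_0}.
\]
Each of these classes is detected by a single leading coefficient: the $u_x\xi_i\xi_j$ coefficient, resp.\ the $u_x\qth\xi_{d-1}$ coefficient.

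For a representative $\qo$, the image is the class of $-(u-\ql)\qp_x\qo$. As in the previous lemma, I will remove the $\ql$-dependence by adding $D_1$ of a suitable primitive. This replaces $-(u-\ql)\qp_x\qo$ by a $\ql$-free cocycle of the form $-u\qp_x\qo+\qp_x(u\qo)-\qth^1(\cdots)$. I will then read off its leading coefficients.

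Since everything is $C^\infty(u)$-linear up to lower-order terms in $f$, $\rho^2_*$ becomes a block map between finite sums of copies of $C^\infty(u)$. Its entries should be integer multiples of $f$, possibly with derivative corrections that are triangular and so do not affect injectivity. The $\qth$-component should behave exactly like the degree-one computation of the previous lemma, with multiplier $d$ in place of $d-1$, so it contributes nothing to the kernel.

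The $\hm G$-component indexes the source by pairs $(i_1,i_2)$ with $i_1<i_2$ and $i_1+i_2=d-2$, and the target by pairs $i\le j$ with $i\neq1$. I expect $\qp_x$ to send the pair $(i_1,i_2)$ to a combination of $(i_1+1,i_2)$ and $(i_1,i_2+1)$ with binomial-type coefficients, after which the target relations are used to rewrite the result in the given basis.

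The main obstacle is this combinatorial step. I need to determine when the resulting integer matrix, from the $n(2,d)$ source copies to the target copies, fails to be injective, and to show that the kernel is exactly one copy of $C^\infty(u)$ when $d$ is odd with $d\ge5$, and zero otherwise. I expect the failure to come from the excluded index $i=1$ on the target side, which makes one source direction fall into the span of the others. I will first compute the cases $d=2,3,4,5,6$ by hand to fix the pattern, and then prove the general parity statement. The determinant should vanish only for odd $d$, for example through an alternating sum $\sum(-1)^k\binom{\cdot}{k}$ that is zero only in that parity.
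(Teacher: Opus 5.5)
Your set-up matches the paper's. You transport $\rho^2_*$ to a map $BH^2_d(\hm{A},\diff{}{\qt_0},\diff{}{\qt_1})\to BH^2_{d+1}(\hm{A},\diff{}{\qt_0},\diff{}{\qt_1}-\qth^1)$, use the representatives of Lemmas~\ref{CI} and~\ref{CO}, remove $\ql$ with a primitive $P$ such that $\diff{P}{\qt_0}=\qo$, and dispose of the $\qth$-linear part through its $u_x\qth^d$ coefficient $-df$. That step is sound, since $BH^1(\hm{G})$ has no coboundaries. It must come first, because the term $\qth\,\qp_x^{d-1}(f\qth^1)$ also feeds the $\qth$-free part of the image via $\qp_x\qth=\qth^1$.

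The gap is the $\qth$-free part, which carries the entire parity statement. There you only announce what you expect the matrix to look like, and the detection principle you rely on is false. In $BH^2(\hm{G},\diff{}{\qt_0},\diff{}{\qt_1}-\xi_0)$ the $u_x\xi_i\xi_j$ coefficient of a representative is not an invariant of its class. The reason is that coboundaries of length-two elements of $\hm{G}^0$ have non-zero length-one parts. For $f=h(u)u_{xx}u^{(4)}$, the length-one part of $(\diff{}{\qt_1}-\xi_0)\diff{f}{\qt_0}$ is $4h\,u^{(3)}\xi_1\xi_2-4h\,u_x\xi_2\xi_3+2h\,u_x\xi_1\xi_4$. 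This changes the coefficient in your admissible slot $(2,3)$. Moreover, the classes with $i=j$ carry no $u_x\xi_j\xi_j$ monomial at all; in the proof of Lemma~\ref{CP} they are detected through the coefficient $b_1u^{(N)}$ of $\xi_1\xi_N$ in $C_N[b]$. So the ``integer matrix with triangular corrections'' has nothing behind it. Computing $d\le 6$ by hand would not prove the general case either.

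The paper supplies exactly this missing computation, in four steps.

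\begin{enumerate}
\item It adds the explicit coboundary $H=(\diff{}{\qt_1}-\xi_0)\diff{}{\qt_0}\qp_x\sum\qp_x^{i_1}(\bar g_{i_1})u^{(i_2)}$. This removes all terms involving the primitives $\bar g_{i_1}$ and leaves $\tilde C=\sum_p\tilde C_{p,q}$.
\item It invokes Lemma~\ref{CK}: a cocycle without monomials of length $0$ or $1$ is trivial, so a class is fixed by its length-$\le1$ part. Matching the length-one part of each $\tilde C_{p,q}$ with the explicit cocycles $\xi_i\diff{\Psi_j[g]}{\qt_0}$ built from Lemma~\ref{CE} gives $[\tilde C_{p,q}]=[E_p[g_p]]+[E_{p+1}[g_p]]$.
\item The $E_1$-classes drop out because the index $i=1$ is excluded in Lemma~\ref{CP}, so your instinct about $i=1$ is right. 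What remains is the bidiagonal system $g_0=0$, $g_{r-1}+g_r=0$.
\item For even $d$ the chain ends with the extra equation $g_{n-1}=0$ from the last term, forcing all $g_r=0$. For $d=2n+3$ no such terminal equation appears, and $g_r=(-1)^{r-1}g_1$ survives.
\end{enumerate}

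The parity therefore comes from how this chain terminates, not from a vanishing determinant or alternating binomial sum.
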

\begin{proof}
    Using the fact \eqref{CA} and Lemma \ref{CI} as well as Lemma \ref{CO}, we see that any class in 
    \[
BH^2_d\left(\hm A,\diff{}{\qt_0},\diff{}{\qt_1}\right) 
    \]
    can be represented by 
    \[
    \qo = \qth\qp_x^{d-1}(f(u)\qth^1)+\sum_{\substack{i_1+i_2+2=d \\ 0\leq i_1<i_2}}\qp_x^{i_1}(g_{i_1}(u)\qth^1)\qth^{i_2+1}.
    \]
    Consider the differential polynomial 
    \[
    P = -\qth\qp_x^{d-1}(\bar f(u))+\sum_{\substack{i_1+i_2+2=d \\ 0\leq i_1<i_2}}\qp_x^{i_1}(\bar g_{i_1}(u))\qth^{i_2+1},
    \]
    where $\bar f(u)$ and $\bar g_i(u)$ are primitive functions of $f(u)$ and $g_i(u)$. Note that 
    \[
    \diff{P}{\qt_0} = \qo.
    \]
    Then, as cohomology classes in $H^2_{d+1}\kk{\hm A[\ql],D_1}$, we have 
    \begin{align*}
        \rho^2_*[\qo] = \left[u\qp_x\qo-\ql\qp_x\qo\right]
        =\left[u\qp_x\qo-\ql\qp_x\qo+D_1(-P)\right] =  \left[u\qp_x\qo-\left(\diff{}{\qt_1}-\qth^1\right)\qp_x P\right],
    \end{align*}
    and we view the class \[
    A = u\qp_x\qo-\left(\diff{}{\qt_1}-\qth^1\right)\qp_x P
    \]
    as a cohomology class in 
    \[
    BH^2_{d+1}\left(\hm A,\diff{}{\qt_0},\diff{}{\qt_1}-\qth^1\right).
    \]
    Let us assume that $[\qo]\in\ker\rho^2_*$. Using \eqref{CD}, we first separate $A$ into the $\qth$-linear part and $\qth$-independent part. It is easy to see that the $\qth$-linear part reads $\qth B$, where  
    \[
    B = u\qp_x^d(f(u)\qth^1)+\qth^1\qp_x^d(\bar f(u))-\qp_x^{d}(f(u)u\qth^1)
    \]
    defines a class in 
    \[
    BH^1_{d+1}\left(\hm A,\diff{}{\qt_0},\diff{}{\qt_1}-\qth^1\right).
    \]
    Again, the class $[B]$ is determined by its $u_x\qth^d$ coefficient, which is $-d f(u)$. Hence,  we must have $f = 0$ if $\rho^2_*([\qo]) = 0$. In particular, this implies that $\ker\rho^2_* = 0$ if $d = 2$.
    
    Now let us focus on the $\qth$-independent part of $A$, and we continue to use the notations $\xi_i = \qth^{i+1}$. Since $f(u) = 0$, by a straightforward computation, the $\qth$-independent part of $A$ reads 
    \[
    C = u\qp_x\sum\qp_x^{i_1}(g_{i_1}\xi_0)\xi_{i_2}+\xi_0\qp_x\sum\qp_x^{i_1}(\bar g_{i_1})\xi_{i_2}-\qp_x \sum\qp_x^{i_1}(g_{i_1}u\xi_0)\xi_{i_2},
    \]
    where we have omitted the summation range for $i_1$ and $i_2$ for  simplifications. We consider the coboundary
    \begin{align*}
        H =&\, \kk{\diff{}{\qt_1}-\xi_0}\diff{}{\qt_0}\qp_x\sum\qp_x^{i_1}\left(\bar g_{i_1}\right)u^{(i_2)}\\
        =&\,\kk{\diff{}{\qt_1}-\xi_0}\kk{\qp_x\sum\qp_x^{i_1}\left(g_{i_1}\xi_0\right)u^{(i_2)}+\qp_x\sum\qp_x^{i_1}\left(\bar g_{i_1}\right)\xi_{i_2}}\\
        =&\,-\qp_x\sum\qp_x^{i_1}\left(g_{i_1}\xi_0\right)\qp_x^{i_2}(u\xi_0)+\qp_x\sum\qp_x^{i_1}\left(g_{i_1}u\xi_0\right)\xi_{i_2}\\
        &-\xi_0\qp_x\sum\qp_x^{i_1}\left(g_{i_1}\xi_0\right)u^{(i_2)}-\xi_0\qp_x\sum\qp_x^{i_1}\left(\bar g_{i_1}\right)\xi_{i_2},
    \end{align*}
    then we can change the representative $C$ into 
    \[
    \tilde C = C+H = u\qp_x\sum\qp_x^{i_1}(g_{i_1}\xi_0)\xi_{i_2}-\qp_x\sum\qp_x^{i_1}\left(g_{i_1}\xi_0\right)\qp_x^{i_2}(u\xi_0)-\xi_0\qp_x\sum\qp_x^{i_1}\left(g_{i_1}\xi_0\right)u^{(i_2)}.
    \]
    Denote by $z_p:=\qp_x^p(g_p\xi_0)$ and set for $p<q$
    \begin{align*}
    \tilde C_{p,q} &= u\qp_x(z_p\xi_q)-\qp_x(z_p\qp_z^q(u\xi_0))-\xi_0\qp_x\left(z_pu^{(q)}\right)\\
    &= -\qp_xz_p\sum_{r=1}^{q-1}\binom{q}{r}u^{(q-r)}\xi_r-z_p\sum_{r=1}^q\binom{q+1}{r}u^{(q+1-r)}\xi_r.
    \end{align*}
    According to the definition \eqref{CH}, we have 
    \[
    \Psi_N[a] = \frac{a(u)}{2(N+1)}\sum_{r=1}^N\binom{N+1}{r}u^{(r)}u^{(N+1-r)} + R,
    \]
    where all monomials in $R$ have length at least 3. Therefore, 
    \[
    \diff{\Psi_N[a]}{\qt_0} = \frac{a(u)}{N+1}\sum_{r=1}^N\binom{N+1}{r}u^{(N+1-r)}\xi_r + Q,
    \]
    where all monomials in $Q$ have length at least 2.
    Now we note that $\tilde C_{p,q}$ has no length 0 monomials and consider the length 1 monomials which are given by 
    \[
    -g_p\xi_{p+1}\sum_{r=1}^{q-1}\binom{q}{r}u^{(q-r)}\xi_r-g_p\xi_p\sum_{r=1}^q\binom{q+1}{r}u^{(q+1-r)}\xi_r,
    \]
    from which we conclude that the cocycle 
    \[
\tilde C_{p,q}-(q+1)\diff{\Psi_q[g_p]}{\qt_0}\xi_p-q\diff{\Psi_{q-1}[g_p]}{\qt_0}\xi_{p+1}
    \]
    has neither length 0 monomilas nor length 1 monomials. We then apply Lemma \ref{CK} to conclude that 
    \[
    [\tilde C_{p,q}] = \left[(q+1)\diff{\Psi_q[g_p]}{\qt_0}\xi_p+q\diff{\Psi_{q-1}[g_p]}{\qt_0}\xi_{p+1}\right]
    \]
    as cohomology classes in 
    \[BH^2_{d+1}\left(\hm G,\diff{}{\qt_0},\diff{}{\qt_1}-\xi_0\right).\]
    Now define the class 
    \[
    E_r[g] = (d-r)\diff{\Psi_{d-1-r}[g]}{\qt_0}\xi_r, 
    \] 
    then we have 
    \[
    [\tilde C_{p,d-1-p}] = \left[E_p[g_p]\right]+\left[E_{p+1}[g_p]\right],
    \]
    and it follows from the proof of Lemma \ref{CP} that 
    \[
    [\tilde C] = [E_0[g_0]]+\sum_{r\geq 2}[E_r[g_{r-1}+g_r]]
    \]
    as cohomology classes.

    Now if $d = 3$, we must have $g_0 = 0$ and hence $\ker \rho^2_* = 0$. If $d = 2n+3$ for some $n\geq 1$, we see that 
    \[[\tilde C] = [E_0[g_0]]+\sum_{r= 2}^{n}[E_r[g_{r-1}+g_r]],\]
    from which it follows that 
    \[
    g_0 = 0,\quad g_{r-1}+g_r = 0,\quad 2\leq r\leq n.
    \]
    This implies that $g_r = (-1)^{r-1}g_1(u)$, where $g_1(u)$ is an arbitrary smooth function.

    If $d = 2n+2$ for some $n\geq 1$, then it follows that 
    \[[\tilde C] = [E_0[g_0]]+\sum_{r= 2}^{n-1}[E_r[g_{r-1}+g_r]]+E_n[g_{n-1}],\]
    and we arrive at 
    \[
    g_r = 0,\quad 0\leq r\leq n-1.
    \]
    The lemma is proved.
\end{proof}
After summarizing all the constructions and computations above, it follows that Theorem \ref{CN} holds true.

\section{Conclusion}
\label{AL}

In this paper, we study the deformation of scalar semisimple generalized bi-Hamiltonian structures. The moduli space of such structures is parametrized by two smooth univariate
  functions $(f(u),g(u))$, and we see that only when these two functions satisfy certain relations, do the corresponding generalized bi-Hamiltonian structure possibly admit non-trivial deformations.

A particularly interesting case is the scalar generalized bi-Hamiltonian structure of viscous type, which admits a non-trivial deformation starting from differential degree 2, which is a new phenomenon that cannot be studied within the framework of ordinary bi-Hamiltonian structures. This provides a possible Dubrovin-Zhang-theoretical viewpoint towards the study of cohomological field theories on the space of Riemann surfaces with boundaries. Furthermore, it follows from the results of this paper that the generalized bi-Hamiltonian structures of viscous type may admit a nice higher-dimensional analog, and one can expect that the moduli space of deformations of an $n$-component generalized bi-Hamiltonian structure of viscous type (once defined) is parametrized by $n$ univariate functions, which play a same role as central invariants in the theory of ordinary bi-Hamiltonian structures. We will study this problem elsewhere.

Another interesting possibility is to extend the construction of super tau-covers \cite{liu2020super,liu2022variational} to the bi-flat F-manifolds. Indeed,  associated with any bi-flat F-manifold, there naturally exists a generalized bi-Hamiltonian structure which commute with the Principal Hierarchy of that manifold \cite{lorenzoni2026generalised}. And the construction of super-tau covers will require the study of Virasoro symmetries (or their certain generalizations) for a bi-flat F-manifold, and investigate how it may control the higher genus theory of semisimple bi-flat F-manifolds. We will study this in a separate paper.


\end{document}